
\documentclass{amsart}


\usepackage{amssymb}
\usepackage{enumitem}
\usepackage{hyperref}
\raggedbottom


\usepackage{circuitikz}
\usepackage{tikz-cd}
\usepackage{tkz-graph}\usetikzlibrary{decorations.markings}\usetikzlibrary{quotes}
\tikzset{->-/.style={
 decoration={
   markings,
  mark=at position #1 with {\arrow{>}}
   },
   postaction={decorate}
  },
  ->-/.default=0.5 
}

\def\PlusCross{{%
    \setbox0\hbox{$+$}%
    \rlap{\hbox to \wd0{\hss$\times$\hss}}\box0
}}


\newtheorem{thm}{Theorem}[section]
\newtheorem{prop}[thm]{Proposition}
\newtheorem{lem}[thm]{Lemma}
\newtheorem{cor}[thm]{Corollary}

\theoremstyle{definition}

\newtheorem{example}[thm]{Example}

\numberwithin{equation}{section}


\newcommand{\RC}{\operatorname{RC}}

\newcommand{\ol}{\overline}


\begin{document}

\title{Pretzel monoids}

\author{Daniel Heath}
\address{D. Heath, Department of Mathematics, University of Manchester, 
Manchester \linebreak M13 9PL}
\email{daniel.heath-2@manchester.ac.uk}

\author{Mark Kambites}
\address{M. Kambites, Department of Mathematics, University of Manchester, 
Manchester \linebreak M13 9PL}
\email{mark.kambites@manchester.ac.uk}

\author{N\'ora Szak\'acs}
\address{N. Szak\'acs, Department of Mathematics, University of Manchester, 
Manchester \linebreak M13 9PL}
\email{nora.szakacs@manchester.ac.uk}

\begin{abstract}
We introduce an interesting class of left adequate monoids which we call \textit{pretzel monoids}. These, on the one hand, are monoids of birooted graphs with respect to a natural `glue-and-fold' operation, and on the other hand, are shown to be defined in the category of left adequate monoids by a natural class of presentations. They are also shown to be the free idempotent-pure expansions of right cancellative monoids, making them, in some sense, the left adequate analogues of Margolis-Meakin expansions for inverse monoids. The construction recovers the second author's geometric model of free left adequate monoids when the right cancellative monoid is free.
\end{abstract}

\maketitle

\section{Introduction}

\textit{Left adequate monoids} (and semigroups) were introduced and first studied by Fountain in the late 1970s \cite{fountain:classofpp,fountain:adequate}.
 They are (roughly speaking - see Section~\ref{sec:leftad} below for a precise definition) those monoids in which every element shares its right cancellativity properties with an idempotent element.  Although a number of interesting things were
proved about them in the decades that followed (see, for example, \cite{araujo:COASBFS,batdebat:connections,branco:ehresmannadequacy,elqallali:factorisable,elqallali:classofabundant}), their study was until relatively recently hampered by the paucity of easy-to-understand, concrete examples. This began to change with the discovery by the second author of a geometric representation of free left adequate semigroups \cite{kambites:freeleft}, and also of the two-sided analogues \cite{kambites:free}, via birooted, edge-labelled, directed trees under a ``glue-and-retract'' operation. This was inspired by Munn's \cite{munn:freeinv} description (building on earlier work of Scheiblich \cite{scheiblich:freeinv}) of the \textit{free inverse semigroups}, with the replacement of morphisms with retractions providing transition from inverse to adequate semigroups.

Munn's model of free inverse semigroups as birooted trees prompted far-reaching generalisation in inverse semigroup theory, which, rooted in the work of Stephen \cite{stephen:presofinv}, provides a geometric framework for the study of inverse monoid presentations, and describes -- in some sense -- all inverse monoids as monoids of birooted graphs \cite{Pedro}. The first step along this road was the work of Margolis and Meakin \cite{mm:inv}, who introduced an important class of inverse monoids consisting of birooted subgraphs of a given group Cayley graph, which is shown to be the free idempotent-pure expansion of the group.

Given that the free objects in the category of left adequate monoids can similarly be seen as birooted graphs, it is natural next to seek further generalizations to wider classes of left adequate monoids. In general we believe this to be hard --- we are still very far from developing anything akin to Stephen's methods for left adequate monoids in general --- but the main purpose of this paper is to make some first tentative steps in this direction. Specifically we introduce a class of left adequate monoids, consisting of a class of birooted, edge-labelled finite graphs, with respect to certain natural combinatorial operations.  We term the resulting monoids \textit{pretzel monoids}, due to a resemblance between the diagrams we use to represent their elements and certain baked goods. We shall show that pretzel monoids are free, idempotent pure expansions of right cancellative monoids, and thus in some respects provide a left adequate analogue of Margolis-Meakin expansions.

More precisely, each pretzel monoid is constructed from a choice of generators ($X$, say) for a given right cancellative monoid ($C$, say). The geometric realisation, described in Section~\ref{sec:pretzel} below, is by certain edge-labelled directed graphs under a multiplication involving gluing two graphs together, identifying vertices which are joined by paths whose label represents the identity in $C$ (an operation we term \textit{idempath identification}), and then \textit{retracting} in a manner similar to the second author's tree representation \cite{kambites:freeleft} of the free left adequate monoid. We also show that the same monoid admits a presentation (as a left adequate monoid) with generating set $X$, and relations declaring every word representing the identity element in $C$ to be idempotent. It follows that it (analogously to the Margolis-Meakin expansion in the theory of inverse monoids) is an initial object in the category of $X$-generated left adequate monoids having maximum cancellative image $C$ and an idempotent-pure morphism to $C$.

This paper consists of 4 further sections: firstly, in Section~\ref{sec:leftad}, we recall some preliminary definitions of left adequate monoids and deduce some new identities which will prove useful in further study. In Section~\ref{sec:idemiden}, we introduce our new operation of idempath identification on labelled graphs, and establish some technical results about it which will be needed later. In Section~\ref{sec:pretzel}, we define our pretzel monoids as collections of these birooted directed, edge-labelled graphs. Finally, in Section~\ref{sec:presentation}, we provide a presentation for a general pretzel monoid as a presentation of left adequate monoids, and explore the way in which pretzel monoids can be viewed as natural expansions of right cancellative monoids.

\section{Left Adequate Monoids}\label{sec:leftad}

We first recall definitions of left adequate monoids \cite{fountain:classofpp,fountain:adequate}. For a comprehensive introduction to general semigroup and monoid theory, we direct the reader to \cite{howie:fundamentals} and for concepts on universal algebra, we further direct the reader to \cite{burris:universal}.

\subsection{Left Adequacy}

Let $M$ be a monoid. The equivalence relation $\mathcal{R}^*$ is defined on $M$ by $a\mathrel{\mathcal{R}^*}b$ if and only if\[ xa = ya \iff xb = yb\]for all $x,y \in M$. A monoid is called \textit{left abundant} if every $\mathcal{R}^*$-class contains an idempotent. Moreover, a monoid is called \textit{left adequate} if it is left abundant and its set of idempotents, denoted $E(M)$, is commutative. It is quickly seen that any idempotent in an $\mathcal{R}^*$-class of a left adequate monoid $M$ is the unique idempotent in its class. We denote by $a^+$ the unique idempotent in the $\mathcal{R}^*$-class of an element $a \in M$.

Left adequate {monoids} may be considered as algebras of signature {(2,1,0)} {with the associative multiplication and identity element supplemented with
the unary operation $+$}, which relates an element to the unique idempotent with which it shares right cancellation properties. The class of left adequate monoids encompasses the class of \textit{inverse monoids}, with the $+$ operation sending any element $a$ to the (unique) idempotent $aa^{-1}$ in its $\mathcal{R}$-class. It also encompasses the class of \textit{right cancellative} monoids, with the $+$ operation sending every element to the identity. Left adequate monoids form a quasi-variety of algebras in the (2,1,0) signature, with defining quasi-identities \[x^+x = x, \ (x^+y^+)^+ = x^+y^+ = y^+x^+,  \ (xy)^+ = (xy^+)^+,\]\[x^2 = x \rightarrow x = x^+ \textrm { and } xy = zy \rightarrow xy^+ = zy^+\] along with {the associative law and the laws making the constant an identity element}. The variety generated by the class is that of the \textit{left Ehresmann monoids} \cite{branco:ehresmann}. Accordingly, the \textit{free left adequate monoid} and the \textit{free left Ehresmann monoid} on a given set coincide. This common object was given a geometric description by the second author \cite{kambites:freeleft} which we build upon in this paper.

We say a left adequate monoid $M$ is \emph{$X$-generated} if there is a map $\iota \colon X \to M$ such that $\iota(X)$ generates $M$ {as a $(2,1,0)$-algebra}. We do not assume $\iota$ to be an embedding, but when it is, we sometimes naturally associate $X$ with its image. Given a word $w \in X^*$, we denote the image of $w$ under $\iota$ by $[w]_M$.

{There are of course also notions of \textit{right adequate} and \textit{two-sided adequate} monoids (see, for example, \cite{fountain:adequate}) and also
corresponding classes of semigroups, but we do not consider these here.}

\subsection{\texorpdfstring{$X$-graphs}{X-graphs}}

Fix a non-empty set $X$. An \textit{$X$-graph} $G$ (or simply \textit{graph} if $X$ is clear) is a finite birooted digraph, with vertices $\mathrm{V}(G)$ including a \textit{start} vertex $\alpha(G)$, which we represent diagrammatically by {\Large$+$} and \textit{end} vertex $\omega(G)$, diagrammatically {\Large$\times$}, with edges $\mathrm{E}(G)$ labelled by elements of $X$, such that any vertex is reachable via some directed path from the start vertex. An \textit{$X$-tree} (or simply \textit{tree}) is an $X$-graph whose underlying, undirected graph is a tree with all edges oriented away from the {start vertex}. {By the \textit{trunk} of an $X$-tree we mean the unique (necessarily directed) simple path from the start vertex to the end vertex.} A \textit{leaf} of a tree is a vertex $l$ for which there is no edge with initial vertex $l$. Figure \ref{fig:trees} shows three examples of $\{x,y\}$-graphs.

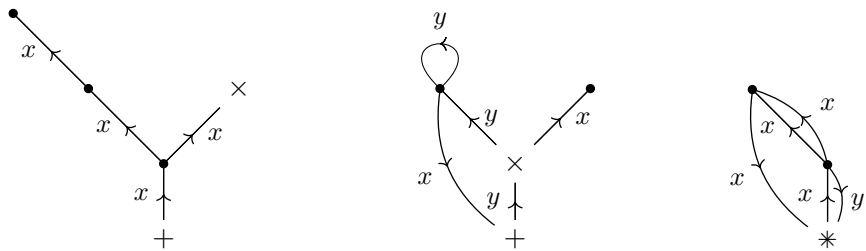
\begin{figure}[ht]
    \centering
    {
    \begin{tikzpicture}
        \GraphInit[vstyle=Empty]
        \SetVertexSimple[MinSize = 1pt]
        \SetUpEdge[lw = 0.5pt]
        \tikzset{EdgeStyle/.style={->-}}
        \tikzset{VertexStyle/.append style = {minimum size = 3pt, inner sep = 0pt}}
        \SetVertexNoLabel
        \SetGraphUnit{2}
        
        \node (A) at ( 0,0) {\large$+$};
        \Vertex[x=0,y=1]{C1}
        \Vertex[x=-1,y=2]{L1}
        \Vertex[x=-2,y=3]{L2}
        \node (R1) at ( 1,2) {\large$\times$};
                  
        \Edge(A)(C1)\draw (A) -- (C1) node [midway, left=2pt] {$x$};
        \Edge(C1)(R1)\draw (C1) -- (R1) node [midway, right=2pt] {$x$};
        \Edge(C1)(L1)\draw (C1) -- (L1) node [midway, left=2pt] {$x$};
        \Edge(L1)(L2)\draw (L1) -- (L2) node [midway, left=2pt] {$x$};
        
    \end{tikzpicture}
    }\hspace{0.1\textwidth}
    {
    \begin{tikzpicture}
        \GraphInit[vstyle=Empty]
        \SetVertexSimple[MinSize = 1pt]
        \SetUpEdge[lw = 0.5pt]
        \tikzset{EdgeStyle/.style={->-}}
        \tikzset{VertexStyle/.append style = {minimum size = 3pt, inner sep = 0pt}}
        \SetVertexNoLabel
        \SetGraphUnit{2}
        
        \node (A) at ( 0,0) {\large$+$};
        \Vertex[x=-1,y=2]{L1}
        \node (C1) at ( 0,1) {\large$\times$};
        \Vertex[x=1,y=2]{R1} 
        
        \Edge(A)(C1)\draw (A) -- (C1) node [midway, left=2pt] {$y$};
        \Loop[style={->-}, dir=NO, dist=30pt](L1)\node (l) at ( -1,2.9) {$y$};
        \Edge(C1)(R1)\draw (C1) -- (R1) node [midway, right=2pt] {$x$};
        \Edge(C1)(L1)\draw (C1) -- (L1) node [midway, right=2pt] {$y$};
        \Edge[style={bend right}](L1)(A)\node (l) at ( -1.2,0.8) {$x$};
        
    \end{tikzpicture}
    }\hspace{0.1\textwidth}
    {
    \begin{tikzpicture}
        \GraphInit[vstyle=Empty]
        \SetVertexSimple[MinSize = 1pt]
        \SetUpEdge[lw = 0.5pt]
        \tikzset{EdgeStyle/.style={->-}}
        \tikzset{VertexStyle/.append style = {minimum size = 3pt, inner sep = 0pt}}
        \SetVertexNoLabel
        \SetGraphUnit{2}
        
        \node (A) at ( 0,0) {\PlusCross};
        \Vertex[x=0,y=1]{C1}
        \Vertex[x=-1,y=2]{L1}
                  
        \Edge(A)(C1)\draw (A) -- (C1) node [midway, left=2pt] {$x$};
        \Edge[style={bend right}](C1)(L1)\node (l) at ( 0,1.8) {$x$};
        \Edge(C1)(L1)\draw (C1) -- (L1) node [midway, left=2pt] {$x$};
        \Edge[style={bend right}](L1)(A)\node (l) at ( -1.2,0.8) {$x$};
        \Edge[style={bend left}](C1)(A)\node (l) at ( 0.4,0.5) {$y$};
            
    \end{tikzpicture}}
    \caption{Some examples of $\{x,y\}$-graphs. The leftmost graph is an $\{x,y\}$-tree. The rightmost graph $G$ has $\alpha(G) = \omega(G)$.}
    \label{fig:trees}
\end{figure}

For an edge $e$ of an $X$-graph, we denote by $\alpha(e)$ the initial vertex of $e$, $\lambda(e)$ the $X$-label of $e$, and $\omega(e)$ terminal vertex of $e$. We say $e$ is \textit{incident} to a vertex $v$ if $v = \alpha(e)$ or $v = \omega(e)$. We often represent an edge $e$ by the diagram \[\alpha(e) \xrightarrow{\lambda(e)} \omega(e).\] {By a (\textit{directed}) \textit{path} in a graph, we mean a traversal of the graph by a sequence of edges which may repeat both edges and vertices. By a \textit{simple} path, we mean a path which does not repeat vertices.} A \textit{cycle} is a closed path, and a \textit{simple cycle} is a closed simple path.

We extend the maps $\alpha$, $\lambda$ and $\omega$ to (directed) paths in the expected way, taking concatenation of labels for $\lambda$. We represent a path $\pi$ with edges $e_1,e_2,\dots,e_k$ by the diagram\[\alpha(e_1) \xrightarrow{\lambda(e_1)} \omega(e_1) = \alpha(e_2) \xrightarrow{\lambda(e_2)} \omega(e_2) = \alpha(e_3) \xrightarrow{e_3} \dots \xrightarrow{e_k} \omega(e_k)\]or simply by\[\alpha(\pi) \xrightarrow{\lambda(\pi)} \omega(\pi).\]

{We write $X^*$ for the set of words over $X$, and say} a word $w \in X^*$ is \textit{readable} in $G$ from a vertex $p$ to a vertex $q$ if there exists a path $p \xrightarrow{w} q$.

A \textit{subgraph} of an $X$-graph $G$ is a subset $V'$ of vertices and $E'$ of edges such that the digraph with vertex set $V'$ and edge set $E'$ has some vertex $\alpha$ such that all vertices are reachable via a directed path from $\alpha$. A \textit{subtree} is a subgraph in which the underlying, undirected graph is a tree. We call a subgraph or subtree of $G$ \textit{rooted} if it contains $\alpha(G)$, and \textit{birooted} if it contains both $\alpha(G)$ and $\omega(G)$.

{Given a vertex $u$ of an $X$-tree, the \textit{cone} of $u$ in $T$, denoted $\mathrm{Cone}_T(u)$, is the largest subtree of $T$ containing all vertices reachable from $u$.} The \textit{height} of an $X$-tree $T$ (or cone) is given by the length of a longest path in $T$ starting from $\alpha(T)$.

A \textit{morphism} of $X$-graphs is a map between $X$-graphs sending edges to edges and vertices to vertices, such that the start [resp. end] vertex is sent to the start [resp. end] vertex, and which respect edge labellings and incidence of edges in the expected way. An idempotent endomorphism on a graph is called a \textit{retract}. A graph is called \textit{retracted} if it admits no non-identity retracts. {Each} finite graph $G$ admits a {retracted image which is unique up to isomorphism (although not
unique as a} {birooted} {subgraph); this is often called the \textit{core} \cite{hell:core} and we denote it $\overline{G}$}.

\begin{lem}\label{lem:retractfrommorph}
     Let $S$ be an $X$-graph, $T$ be a birooted subgraph of $S$ and suppose there exists a morphism $\phi: S \to T$. Then $\overline{S}\cong \overline{T}$.
 \end{lem}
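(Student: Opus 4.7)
The plan is to reduce the claim to the following characterization of cores: any two retracted finite $X$-graphs $H_1, H_2$ admitting $X$-graph morphisms both ways are isomorphic. Taking this for granted, the lemma follows easily. Since $\overline{S}$ is a retract of $S$, there is a morphism $r_S \colon S \to \overline{S}$ together with an inclusion $j_S \colon \overline{S} \hookrightarrow S$; analogously for $r_T, j_T$. The composites
\[
\overline{S} \xrightarrow{j_S} S \xrightarrow{\phi} T \xrightarrow{r_T} \overline{T},
\qquad
\overline{T} \xrightarrow{j_T} T \xrightarrow{\iota} S \xrightarrow{r_S} \overline{S}
\]
are $X$-graph morphisms between $\overline{S}$ and $\overline{T}$, which are retracted by construction. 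Hence the characterization delivers $\overline{S} \cong \overline{T}$, as required.

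To establish the characterization, I would first observe that every endomorphism $h$ of a finite retracted $X$-graph $H$ is an automorphism: the endomorphism monoid of $H$ is finite, so some power $h^n$ is idempotent and hence a retract of $H$; by retractedness, $h^n = \mathrm{id}_H$, so $h$ is bijective on vertices and edges, with inverse $h^{n-1}$ also a morphism. Applying this with $H = H_1$ and $h = g \circ f$ shows $g \circ f$ is an automorphism; in particular, $f$ is injective on vertices and on edges, and symmetrically for $g$. A vertex/edge count then gives $|V(H_1)| = |V(H_2)|$ and $|E(H_1)| = |E(H_2)|$, so both $f$ and $g$ are bijective. Finally, the set-theoretic inverse of $f$ is forced to respect sources, targets, and labels (because $f$ does and is bijective), so $f$ is an isomorphism of $X$-graphs.

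The main obstacle is the bookkeeping in the characterization lemma, specifically ensuring that bijective $X$-graph morphisms are isomorphisms in this category (one must track both vertex and edge bijectivity, and confirm that the inverse set-map preserves edge labels and incidence). The proof of the lemma itself, once this is in hand, is just two diagram chases and should be recorded in a short paragraph.
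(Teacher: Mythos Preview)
Your proof is correct, but takes a different route from the paper's. The paper argues directly: viewing $\phi$ as an endomorphism of $S$ (via the inclusion $T \subseteq S$), some power $\phi^n$ is idempotent and hence a retract of $S$; one then checks that $\phi^n(S) = \phi^n(T)$, so this common image is simultaneously a retract of $S$ and of $T$, and confluence of retracts gives $\overline{S} \cong \overline{T}$. Your approach instead isolates the general principle that two cores which are homomorphically equivalent must be isomorphic (the standard Hell--Ne\v{s}et\v{r}il fact), and then just observes that $\overline{S}$ and $\overline{T}$ admit morphisms in both directions. Your route is more modular and makes explicit a reusable lemma about cores; the paper's is slightly more self-contained and avoids the separate cardinality argument by exhibiting a single concrete subgraph that retracts both $S$ and $T$. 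Either is perfectly acceptable here.
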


 \begin{proof}
 Since $T$ is a subgraph of $S$, we may view $\phi$ as a map from $S$ to itself, and hence compose it with itself. Since $S$ is finite, there exists an $n \in \mathbb{N}$ such that $\phi^n$ is idempotent, in other words, a retract of $S$ with image contained in $T$. Moreover, it is easily seen that $\phi^n|_T$ induces a retract on $T$. We now show that $\phi^n(S) = \phi^n(T)$. Since $T$ is a subgraph of $S$, clearly $\phi^n(T) \subseteq \phi^n(S)$. Conversely, since $\phi^n$ is idempotent, if $y \in \phi^n(S)$, then $y \in \phi^{2n}(S) = \phi^{n}\phi^{n}(S) \subseteq \phi^n(T)$.
 
 Thus by uniqueness and confluence of retracts \cite{hell:core}, we have that \[\overline{S} \cong \overline{\phi^n(S)} \cong \overline{\phi^n(T)} \cong \overline{T}\]as required.
 \end{proof}

The following is the main result from \cite[Theorem 3.16]{kambites:freeleft}.

\begin{thm}
    The free left adequate monoid $\mathrm{FLAd}(X)$ is given by the set of all isomorphism classes of retracted $X$-trees, with multiplication given by (the class of) gluing representatives start-to-end then retracting, and $+$-operation given by moving the end vertex to the start and then retracting. This structure is generated by the set of \textit{base trees} given by single-edge trees with distinct start and end vertices labelled $x$ for each $x \in X$ (we identify this set of trees with $X$ itself).
\end{thm}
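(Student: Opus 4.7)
My plan is to show, in turn, that the set $M$ of isomorphism classes of retracted $X$-trees with the stated operations is a left adequate monoid, that it is generated by the base trees, and that it satisfies the universal property of $\FLAd(X)$. For the first of these, I would begin by checking well-definedness of the operations on isomorphism classes: since cores are unique up to isomorphism \cite{hell:core}, the glue-and-retract product does not depend on the choice of retract representative, and associativity follows via Lemma~\ref{lem:retractfrommorph} applied to the natural morphism from any partially retracted double gluing into the full triple gluing. The identity element is the trivial one-vertex tree. Each defining quasi-identity of a left adequate monoid is then verified geometrically: for instance, $x^+ x = x$ holds because forming $x^+$ creates a copy of the trunk of $x$ as a branch at $\alpha$, which is absorbed by retraction after gluing $x$ back on; the implication $x^2 = x \Rightarrow x = x^+$ holds since a retracted idempotent tree must satisfy $\alpha = \omega$; and $x^+ y^+ = y^+ x^+$ follows because gluing two trees with $\alpha = \omega$ produces the same union regardless of order.

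For the second step, given a retracted tree $T$ with trunk $\alpha(T) \xrightarrow{x_1} v_1 \xrightarrow{x_2} \cdots \xrightarrow{x_k} \omega(T)$, write $B_i$ for the union of the cones at $v_i$ along edges not on the trunk. Viewing each $B_i$ as a tree with both start and end at $v_i$, it has strictly fewer edges than $T$ and so by induction on the number of edges admits an expression in terms of the base trees. Writing the full expression $T = B_0^+ \cdot x_1 \cdot B_1^+ \cdots x_k \cdot B_k^+$ then exhibits $T$ as generated by the base trees.

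The third step, the universal property, is where the main obstacle lies. Given any left adequate monoid $N$ and map $f \colon X \to N$, the decomposition above suggests a candidate extension $\bar f \colon M \to N$ sending each tree to the element of $N$ obtained by applying $f$ and the operations of $N$ to any such decomposition. The difficulty is that the decomposition involves arbitrary choices — the order of the cones at each trunk vertex and the recursive sub-decomposition of each $B_i$ — so well-definedness must be shown by confirming that the resulting $N$-element is invariant under these choices. Here commutativity of idempotents handles cone reordering, while the remaining adequate identities together with associativity handle reassociations and the trunk-cone interactions. Concretely, the first two steps yield a canonical surjection $\phi \colon \FLAd(X) \twoheadrightarrow M$, and the task reduces to showing $\phi$ is injective; equivalently, one constructs a set map $M \to \FLAd(X)$ via the decomposition and verifies that it is a well-defined two-sided inverse. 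The delicate combinatorial induction on tree structure needed to establish this invariance is the principal obstacle.
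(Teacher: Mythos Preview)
The paper does not prove this theorem at all: it is quoted verbatim as ``the main result from \cite[Theorem 3.16]{kambites:freeleft}'' and used as a black box. So there is no proof in the present paper to compare your proposal against.

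That said, your outline is broadly the shape of the argument in \cite{kambites:freeleft}: one first verifies that the tree monoid is left adequate (indeed left Ehresmann), then obtains a surjection from $\FLAd(X)$ by freeness, and finally shows injectivity. Your proposed route to injectivity --- defining a set-theoretic inverse via the trunk-and-branches decomposition and checking it is well-defined --- is not how the original paper proceeds, however. The original argument instead works directly with the congruence on $\FLAd(X)$: it identifies a normal form (the retracted tree) for each element and shows, by an analysis of the defining quasi-identities for left adequate monoids, that two $(2,1,0)$-terms evaluating to the same element of any left adequate monoid must yield trees with isomorphic retracts. Your approach would in principle work, but the ``delicate combinatorial induction'' you flag is genuinely the whole content of the theorem, and your sketch gives no indication of how to carry it out beyond invoking the axioms; in particular, showing that \emph{every} relation holding in an arbitrary left adequate monoid already holds among retracted trees is not obviously reducible to checking a finite list of identities, since left adequate monoids form only a quasivariety. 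The original proof handles this by showing the tree monoid lies in the variety of left Ehresmann monoids and that the free objects coincide.
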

 
 For reasons hinted at by the above result, we generally work with $X$-graphs up to isomorphism. For notational simplicity we will (where no possible ambiguity arises)
 identify $X$-graphs with their isomorphism types, and hence write that two $X$-graphs are equal when we mean formally only that they are isomorphic.

 With this viewpoint, we may consider elements of general left adequate monoids as classes of $X$-trees. Similar to our notation for words, for an $X$-tree $T$ and $X$-generated left adequate monoid $M$, we denote by $[T]_M$ the image of {$\overline{T}$} under the surjective $(2,1,0)$-morphism $\textrm{FLAd}(X) \to M$ which maps $x \mapsto x$ for all $x \in X$. We say that two trees $T_1$ and $T_2$ are \textit{equal in $M$} if $[T_1]_M = [T_2]_M$. We may define a multiplication and a unary operation on all (not necessarily retracted) $X$-trees using the glue and $+$ operations without retraction. Whilst the monoid obtained in this way is not left adequate, it has the property that trees equal in this monoid are equal in all left adequate monoids.

\subsection{Identities} We list some properties of left adequate monoids which we will use without further reference (see \cite[Proposition 2.1]{kambites:free}). We invite the reader to view these identities geometrically using $X$-trees.

\begin{lem}\label{lem:ladidentities}
    Let $M$ be a left adequate monoid and let $a,b,e\in M$ with $e$ idempotent. Then $e^+ = e$, $(ab)^+ = (ab^+)^+$, $a^+a = a$, $ea^+ = (ea)^+$ and $a^+(ab)^+ = (ab)^+$.
\end{lem}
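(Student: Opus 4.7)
My plan is to verify the five identities in turn, treating the earlier ones as lemmas available for the later ones. Each identity should follow fairly directly from the defining quasi-identities listed in the subsection on left adequacy, together with the observation that any finite product of idempotents is itself idempotent (because idempotents commute and so squaring such a product rearranges into the square of each factor).

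First, $e^+ = e$ is immediate from the quasi-identity $x^2 = x \Rightarrow x = x^+$, applied to the idempotent $e$. Second, $(ab)^+ = (ab^+)^+$ and $a^+a = a$ are two of the listed defining quasi-identities, so they require nothing further. These three facts together give a useful corollary: for any idempotents $e, f$ in $M$, the product $ef$ is again idempotent (use commutativity $ef = fe$ together with $e^2 = e$, $f^2 = f$), and hence $(ef)^+ = ef$.

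For the fourth identity $ea^+ = (ea)^+$, I plan to rewrite $e = e^+$ using identity (1), so that $ea^+ = e^+ a^+$ is a product of two commuting idempotents and therefore idempotent; applying identity (1) again gives $(ea^+)^+ = ea^+$. On the other hand, identity (2) gives $(ea)^+ = (ea^+)^+$, so combining these yields $(ea)^+ = ea^+$, as desired.

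The fifth identity, $a^+(ab)^+ = (ab)^+$, is the main point where a little more thought is needed, and I expect this to be the mildest obstacle in the lemma. The approach is to exploit the $\mathcal R^*$-quasi-identity $xy = zy \Rightarrow xy^+ = zy^+$. Starting from $a^+ a = a$ (identity (3)) and multiplying on the right by $b$, I get $a^+ \cdot (ab) = 1 \cdot (ab)$. Applying the quasi-identity with $x = 1$, $z = a^+$, and $y = ab$ then yields $1 \cdot (ab)^+ = a^+ \cdot (ab)^+$, which is exactly the required equality $(ab)^+ = a^+(ab)^+$. This completes the verification of the lemma.
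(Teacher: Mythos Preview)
Your proof is correct. The paper does not actually prove this lemma at all; it simply cites \cite[Proposition 2.1]{kambites:free} and states the identities without further argument. Your derivation of each identity directly from the defining quasi-identities (in particular, your use of $xy = zy \Rightarrow xy^+ = zy^+$ with $x=1$, $z=a^+$, $y=ab$ for the fifth identity) is exactly the kind of verification one would expect, and is a genuine value-add over the paper's bare citation.
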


We now provide some new identities which will prove useful in the study of left adequate monoids.

\begin{lem}\label{lem:leftadidents}
    Let $M$ be a left adequate monoid and $x,y,z \in M$ with $xy$ idempotent. Then:
    \begin{enumerate}[label={(\roman*)},itemindent=0.5em]
        \item $yxyx \in E(M)$;\label{prop:yxyx}
        \item $xy^+ = xyx$;\label{prop:xy+}
        \item $xz^+y = xy(xz)^+.$ In particular, $xz^+y$ is idempotent.\label{prop:xz+y}
    \end{enumerate}
\end{lem}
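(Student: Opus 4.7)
The plan is to prove (i) by a direct computation, (ii) by applying the quasi-identity $ab=cb \Rightarrow ab^+ = cb^+$ twice, and (iii) by using (i), (ii) and the basic identities from Lemma~\ref{lem:ladidentities} to show that $xz^+y$ is idempotent and $\mathcal{R}^*$-related to $xy(xz)^+$. For (i), since $xy$ is idempotent we have $(xy)^n = xy$ for all $n \ge 1$, and so
$$(yxyx)^2 = y(xy)(xy)(xy)x = y(xy)x = yxyx.$$

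For (ii), both $xyx$ and $xy^+$ satisfy $a\cdot y = xy$ (using idempotence of $xy$ and $y^+y=y$), so the quasi-identity with $b=y$ gives $(xyx)y^+ = (xy^+)y^+ = xy^+$. Next, $(xyx)(yx) = (xy)^2 x = xyx = (xyx)\cdot 1$, so the quasi-identity yields $(xyx)(yx)^+ = xyx$. From $a^+(ab)^+ = (ab)^+$ we get $y^+(yx)^+ = (yx)^+$, and commutativity of idempotents then gives $(yx)^+ y^+ = (yx)^+$; therefore $xyx\cdot y^+ = xyx\cdot (yx)^+ y^+ = xyx\cdot (yx)^+ = xyx$. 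Combining, $xy^+ = xyx$.

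For (iii), applying $ea^+=(ea)^+$ with $e = xy$ immediately gives $xy(xz)^+ = (xyxz)^+$, so the right-hand side is idempotent. Commuting the idempotents $y^+$ and $z^+$, using (ii) to rewrite $xy^+$ as $xyx$, and applying $(ab^+)^+=(ab)^+$, we compute
$$(xz^+y)^+ = (xz^+y^+)^+ = (xy^+z^+)^+ = (xyxz^+)^+ = (xyxz)^+ = xy(xz)^+,$$
placing $xz^+y$ and $xy(xz)^+$ in the same $\mathcal{R}^*$-class. By uniqueness of the idempotent in such a class, it will suffice to show that $xz^+y$ is itself idempotent. The central calculation is
$$(xz^+y)(xyxz) = xz^+\cdot(yxyx)\cdot z = x\cdot(yxyx)\cdot z^+z = x(yxyx)z = (xy)^2(xz) = xyxz,$$
where the crucial step is the second equality: by (i), $yxyx$ is idempotent and so commutes with the idempotent $z^+$. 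The quasi-identity then yields $(xz^+y)(xyxz)^+ = (xyxz)^+$, and substituting $xz^+y = (xyxz)^+\cdot xz^+y$ (an instance of $a^+a=a$, using the $^+$-calculation above) into the second factor of $(xz^+y)^2$ produces
$$(xz^+y)^2 = (xz^+y)(xyxz)^+(xz^+y) = (xyxz)^+(xz^+y) = xz^+y.$$
Hence $xz^+y$ is idempotent, so $xz^+y = (xz^+y)^+ = xy(xz)^+$, and the ``in particular'' claim is immediate. I expect the main obstacle to be spotting that (i) is precisely what enables $z^+$ to be commuted past $yxyx$ in the central calculation; without this observation there is no obvious route to the idempotence of $xz^+y$.
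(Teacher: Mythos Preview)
Your arguments for (i) and (iii) are correct; the treatment of (iii) is a genuine alternative to the paper's. The paper first uses (ii) to obtain $xz^+y = xyxz^+y$ and then verifies $(xz^+y)^2 = xz^+y$ by a direct expansion, only afterwards computing $(xz^+y)^+$. You instead compute $(xz^+y)^+ = (xyxz)^+$ first, and then prove idempotence by the neat observation that $(xz^+y)(xyxz) = xyxz$ (commuting $z^+$ past the idempotent $yxyx$ from (i)) followed by a legitimate application of the quasi-identity. Both routes are of comparable length; yours makes the role of (i) more transparent.

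There is, however, a real gap in your proof of (ii). In the second step you write $(xyx)(yx) = xyx = (xyx)\cdot 1$ and claim ``the quasi-identity yields $(xyx)(yx)^+ = xyx$''. The quasi-identity available is $ab = cb \Rightarrow ab^+ = cb^+$: the \emph{same} right factor $b$ must appear on both sides. Your equation has right factors $yx$ and $1$, so it is of the form $ab = ac$, and the inference $ab^+ = ac^+$ is \emph{not} one of the left adequate axioms (it would be a left-cancellation-for-$^+$ rule, which fails in general). Thus the deduction $(xyx)(yx)^+ = xyx$ is unjustified as written, and your chain $xyx\cdot y^+ = xyx\cdot (yx)^+ y^+ = xyx\cdot (yx)^+ = xyx$ collapses without it.

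The claim itself is true and can be rescued cheaply using (i): since $yxyx$ and $(yx)^+$ are both idempotent they commute, so $yxyx\,(yx)^+ = (yx)^+\,yxyx = yxyx$ (the last equality from $(yx)^+yx = yx$); then $xyx\,(yx)^+ = (xyx\cdot yxyx)(yx)^+ = xyx\cdot yxyx = xyx$. With this patch your argument for (ii) goes through. The paper avoids the detour entirely, arguing in a single chain
\[
xy^+ = (xy)(xy)xy^+ = x(yxyx)y^+ = xy^+(yxyx) = x(yxyx) = xyx,
\]
which uses only (i), commutativity of idempotents, and $y^+y = y$.
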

\begin{proof}
    Throughout, we suppose $xy$ idempotent, that is $(xy)^+ = xy = xyxy$, and recall that idempotents commute.
    \begin{enumerate}[label={(\roman*)},itemindent=0.5em]
        \item $yxyx = y(xy)x = y(xy)(xy)(xy)x = yxyxyxyx = (yxyx)(yxyx)$.
        \item We have\begin{align*}
            xy^+ &= (xy)^+(xy)^+xy^+ = xyxyxy^+ = x(yxyx)y^+ = xy^+(yxyx)\\
            &= x(yxyx) = (xy)(xy)x = xyx.
        \end{align*}
        \item First note that $xz^+y = xz^+y^+y = xy^+z^+y = xyxz^+y$ by \ref{prop:xy+}. Thus \[(xz^+y)^2 = (xz^+y)xy(xz^+y) = xz^+(yxyx)z^+y = xyxyxz^+y = xyxz^+y = xz^+y\]
and hence $xz^+y \in E(M)$. Moreover, \[(xz^+y)^+ = (xz^+y^+)^+ = (xy^+z^+)^+ = (xy^+z)^+ = (xyxz)^+ = xy(xz)^+.\] 
where the penultimate inequality is from \ref{prop:xy+}. Since $xz^+y$ is idempotent, we have that $xz^+y = (xz^+y)^+ = xy(xz)^+$.
    \end{enumerate}
\end{proof}

In fact, a more general result than Lemma \ref{lem:leftadidents}\ref{prop:xz+y} holds. 

\begin{thm}\label{thm:idemtrunk}
Let $M$ be a left adequate monoid and $x_1,\dots,x_k \in M$ such that $x_1\cdots x_k$ idempotent. For any $u_1,u_2,\dots, u_{k-1} \in M$, we have:
\begin{equation}\label{eq:idemtrunk}
\begin{split}
&x_1{u_1}^+x_2{u_2}^+\cdots x_{k-1}{u_{k-1}}^+x_k =\\ &(x_1\cdots x_k)(x_1u_1)^+(x_1x_2u_2)^+\cdots(x_1\cdots x_{k-1}u_{k-1})^+\end{split}
\end{equation}In particular, $x_1{u_1}^+x_2{u_2}^+\cdots x_{k-1}{u_{k-1}}^+x_k$ is idempotent.

\end{thm}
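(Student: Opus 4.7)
The plan is to prove this by induction on $k$, using Lemma~\ref{lem:leftadidents}\ref{prop:xz+y} as the base case $k=2$ (with $k=1$ a trivial case corresponding to an empty product).

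For the inductive step, suppose the identity holds for all sequences of length less than $k$, and let $x_1,\dots,x_k\in M$ with $x_1\cdots x_k$ idempotent. The naive attempt to peel off $x_1u_1^+$ from the left and apply induction to $x_2u_2^+\cdots x_k$ fails, since there is no reason for $x_2\cdots x_k$ to be idempotent. The fix is to absorb $x_2$ into $x_1$. Set $y_1:=x_1x_2$ and $y_j:=x_{j+1}$ for $2\le j\le k-1$; then $y_1\cdots y_{k-1}=x_1\cdots x_k$ is still idempotent, and
\[ x_1x_2u_2^+x_3u_3^+\cdots x_{k-1}u_{k-1}^+x_k \;=\; y_1u_2^+y_2u_3^+\cdots y_{k-2}u_{k-1}^+y_{k-1} \]
is an instance of the left-hand side of \eqref{eq:idemtrunk} with $k-1$ factors. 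The inductive hypothesis then delivers both that this expression equals $(x_1\cdots x_k)\prod_{j=2}^{k-1}(x_1\cdots x_ju_j)^+$ and, crucially for the next step, that it is itself idempotent.

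Equipped with this idempotency, I apply Lemma~\ref{lem:leftadidents}\ref{prop:xz+y} with $x=x_1$, $z=u_1$, and $y=x_2u_2^+\cdots x_k$, where the required hypothesis ``$xy$ idempotent'' is exactly what the induction has just established. This produces
\[ x_1u_1^+x_2u_2^+\cdots x_k \;=\; \bigl(x_1x_2u_2^+\cdots x_k\bigr)(x_1u_1)^+. \]
Substituting the inductive expression for the first factor and reordering the commuting idempotents $(x_1\cdots x_ju_j)^+$ yields the right-hand side of \eqref{eq:idemtrunk}. Idempotency of the whole expression is then automatic, as the right-hand side is a product of the idempotent $x_1\cdots x_k$ with the commuting idempotents $(x_1\cdots x_ju_j)^+$.

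The main obstacle is the one just flagged: setting up the induction correctly. The identity is about moving every $+$-decoration off the trunk $x_1\cdots x_k$ and onto the right, but a straightforward left-to-right induction destroys the idempotency of the trunk after the first step. Grouping $x_1x_2$ as a single element is the natural way to shorten the sequence of $x_i$'s while preserving the hypothesis on the total product, and then a single application of Lemma~\ref{lem:leftadidents}\ref{prop:xz+y} closes the induction.
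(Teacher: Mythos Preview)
Your proof is correct and is essentially identical to the paper's own argument: both proceed by induction on $k$, group $x_1x_2$ into a single factor to apply the inductive hypothesis to a product of $k-1$ elements, and then use Lemma~\ref{lem:leftadidents}\ref{prop:xz+y} (with $x=x_1$, $z=u_1$, and $y=x_2u_2^+\cdots x_k$) to reinsert the missing $u_1^+$.
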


\begin{proof}
We proceed by strong induction. If $k = 1$ then there is nothing to prove. If $k=2$ then this is exactly the result of Lemma \ref{lem:leftadidents}\ref{prop:xz+y}.

Now suppose the identity $({\ref{eq:idemtrunk}})$ holds for integers $0 < k < N$ for some $N \geq 3$. Let $x_1,\dots,x_N$ and $u_1,\dots,u_{N-1}$ be given such that $x_1\cdots x_N$ is an idempotent of $M$. For notation, we denote $X_\ell := x_1x_2\cdots x_\ell$ for $1 \leq \ell \leq N$.

Consider the product $(x_1x_2)x_3\cdots x_N$ of $M$. By assumption, this is an idempotent product of $N-1$ elements of $M$. Thus by our inductive assumption, we have that
\begin{equation}\label{eq:idemtrunk1}(x_1x_2){u_2}^+x_3{u_3}^+\cdots x_{N-1}{u_{N-1}}^+x_N = (X_N)(X_2u_2)^+\cdots(X_{N-1}u_{N-1})^+\end{equation}In particular, $(x_1x_2){u_2}^+x_3{u_3}^+\cdots x_{N-1}{u_{N-1}}^+x_N$ is an idempotent of $M$. Firstly, by right-multiplying \eqref{eq:idemtrunk1} by $(x_1u_1)^+$, we have that\begin{equation}\label{eq:idemtrunk2}
\begin{split}
(x_1x_2){u_2}^+x_3{u_3}^+\cdots x_{N-1}{u_{N-1}}^+x_N(x_1u_1)^+ =&\ (X_N)(X_2u_2)^+\cdots\\
 &\quad (X_{N-1}u_{N-1})^+(x_1u_1)^+.\end{split}
\end{equation}By commuting idempotents on the right-hand side of \eqref{eq:idemtrunk2}, we obtain exactly the right-hand side of $({\ref{eq:idemtrunk}})$.

Now, we appeal to Lemma \ref{lem:leftadidents}\ref{prop:xz+y}. Since the left-hand side of \eqref{eq:idemtrunk1} is idempotent, we may insert the idempotent ${u_1}^+$ after $x_1$ to obtain\begin{equation}\label{eq:idemtrunk3}
\begin{split}
x_1{u_1}^+x_2{u_2}^+\cdots x_{N-1}{u_{N-1}}^+x_N =&\ (x_1x_2){u_2}^+x_3{u_3}^+\cdots \\
 &\quad x_{N-1}{u_{N-1}}^+x_N(x_1u_1)^+.\end{split}
\end{equation}The left-hand side of \eqref{eq:idemtrunk3} is exactly the left-hand side of $({\ref{eq:idemtrunk}})$, and the right-hand side of \eqref{eq:idemtrunk3} is exactly the left-hand side of \eqref{eq:idemtrunk2}. Hence we have total equality: the left-hand side of $({\ref{eq:idemtrunk}})$ is the left-hand side of \eqref{eq:idemtrunk3}, which is equal to the right-hand side of \eqref{eq:idemtrunk2}, which is in turn equal to the right-hand side of $({\ref{eq:idemtrunk}})$. Thus our equation $({\ref{eq:idemtrunk}})$ holds. The full result therefore follows by induction.\end{proof}

Theorem \ref{thm:idemtrunk} allows us the following important corollary.

\begin{cor}\label{cor:idemtrunk}
    Let $M$ be an $X$-generated left adequate monoid and $T$ be an $X$-tree. Let $\Theta(T)$ be the birooted subtree of $T$ consisting exactly of the trunk of $T$. Then $[\Theta(T)]_M \in E(M)$ implies $[T]_M \in E(M)$.
\end{cor}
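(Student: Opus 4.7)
The plan is to decompose the tree $T$ along its trunk into a product of base edges interspersed with idempotent pieces coming from the off-trunk cones, and then to exhibit $[T]_M$ as a product of three idempotents, after using Theorem~\ref{thm:idemtrunk} to handle the middle block.

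First I would set up notation: write the trunk as $\alpha(T) = v_0 \xrightarrow{x_1} v_1 \xrightarrow{x_2} \cdots \xrightarrow{x_k} v_k = \omega(T)$, so that $[\Theta(T)]_M = x_1 x_2 \cdots x_k$ is idempotent by hypothesis. For each trunk vertex $v_i$, define $D_i$ to be the subtree of $T$ consisting of $v_i$ together with every vertex and edge reachable from $v_i$ without traversing a trunk edge, viewed as an $X$-tree with both start and end at $v_i$. Since $T$ is a tree, these $D_i$ partition the non-trunk edges of $T$, and each is an idempotent in $\mathrm{FLAd}(X)$ because $\alpha(D_i) = \omega(D_i)$. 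Setting $e_i := [D_i]_M$, each $e_i$ is an idempotent of $M$.

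Next I would verify that the sequential gluing of $D_0$, the base edge $x_1$, $D_1$, the base edge $x_2$, etc., reconstructs $T$ in $\mathrm{FLAd}(X)$; this is essentially unfolding the definition of the glue-and-retract multiplication, so I would not belabour it. This gives the decomposition
\[ [T]_M = e_0 \, x_1 \, e_1 \, x_2 \, e_2 \cdots x_{k-1} \, e_{k-1} \, x_k \, e_k \]
in $M$. Since each $e_i$ satisfies $e_i^+ = e_i$, applying Theorem~\ref{thm:idemtrunk} with $u_i = e_i$ to the idempotent product $x_1 \cdots x_k$ yields that
\[ f := x_1 \, e_1 \, x_2 \, e_2 \cdots x_{k-1} \, e_{k-1} \, x_k \]
is idempotent in $M$.

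It remains to show that $e_0 f e_k$ is idempotent. Here I would just exploit commutativity of $E(M)$: since $e_0$, $e_k$ and $f$ are all idempotent and any two idempotents commute, one can shuffle the expression $e_0 f e_k e_0 f e_k$ into $e_0 e_0 f f e_k e_k = e_0 f e_k$ by a short chain of commutations. I do not foresee a serious obstacle; the one place to be careful is justifying the decomposition of $[T]_M$ as a product of the base edges and off-trunk cone idempotents, which is the only step that really uses the geometry of $\mathrm{FLAd}(X)$ rather than the algebra of left adequate monoids.
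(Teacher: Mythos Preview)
Your proposal is correct and follows essentially the same approach as the paper: both decompose $T$ along its trunk as $u_0^+ x_1 u_1^+ \cdots x_k u_k^+$ (your $e_i$ are the paper's $u_i^+$), apply Theorem~\ref{thm:idemtrunk} to the middle block, and conclude that $[T]_M$ is a product of idempotents. The only cosmetic difference is that the paper writes out the explicit right-hand side of~(\ref{eq:idemtrunk}) and observes the whole expression is a product of idempotents, whereas you invoke only the ``in particular'' clause of the theorem and then handle $e_0$ and $e_k$ by commutativity of $E(M)$.
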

\begin{proof}
Suppose $\Theta(T)$ has label $x_1\cdots x_k$ with $[x_1\cdots x_k]_M \in E(M)$. Then $T$ may be written in the form\[T = {u_0}^+x_1{u_1}^+x_2{u_2}^+\cdots x_{k-1}{u_{k-1}}^+x_k{u_k}^+\]for some $X$-trees $u_i$. By Theorem \ref{thm:idemtrunk}, we have\[ [T]_M = [{u_0}^+(x_1\cdots x_k)(x_1u_1)^+(x_1x_2u_2)^+\cdots(x_1\cdots x_{k-1}u_{k-1})^+{u_k}^+]_M\]which is a product of idempotents and hence idempotent.
\end{proof}

We now consider an $X$-generated left adequate monoid $M$ and an $X$-tree $T$. Suppose $T$ contains some path labelled by a word $x_1x_2\cdots x_k$ such that $[x_1x_2\cdots x_k]_M$ is an idempotent of $M$. Our goal is to show that $[T]_M$ is equal to the $M$-value of the tree obtained by `moving' a subtree through the path. 

\begin{thm}\label{thm:movedown}
    Let $M$ be an $X$-generated left adequate monoid and $T$ be an $X$-tree. Suppose $T$ contains a path $w$ labelled $x_1x_2\cdots x_k$ such that \[[x_1x_2\cdots x_k]_M \in E(M).\] Suppose $V$ is a subtree of $\mathrm{Cone}_T(\omega(w))$, containing the vertex $\omega(w)$, whose erasure from $T$ would not disconnect the underlying (undirected) tree of $T$. Let $T'$ be the tree obtained by taking $T$ with $V$ erased, and gluing a copy of $V$ to $\alpha(w)$ (via the vertex $\omega(w)$ in $V$). Then $[T]_M = [T']_M$.
\end{thm}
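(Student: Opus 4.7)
Plan: The argument turns on the sub-expression
\[
P := x_1 u_1^+ x_2 u_2^+ \cdots x_{k-1} u_{k-1}^+ x_k,
\]
in which each $u_i^+$ records the combined idempotent contribution of the subtrees of $T$ hanging off the intermediate path vertex $v_i$ (for $1 \le i \le k-1$) and not lying on $w$. Since the word $x_1 \cdots x_k$ is idempotent in $M$ by hypothesis, Theorem~\ref{thm:idemtrunk} yields that $P$ is itself an idempotent of $M$. The identity $[T]_M = [T']_M$ will then follow by writing $[T]_M$ and $[T']_M$ as products in which $P$ and the contribution coming from $V$ appear as adjacent factors, and applying the fact that idempotents commute in a left adequate monoid.

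Concretely, in the principal case where $w$ lies on the trunk of $T$ and $\omega(T) \notin V$, the subtree $V$ hangs off $q$ as a pure idempotent $e_V \in E(M)$ and one computes $[T]_M = A \cdot P \cdot e_V \cdot C$ while $[T']_M = A \cdot e_V \cdot P \cdot C$, for the same prefix $A$ (encoding everything up to and including the idempotents at $p$) and suffix $C$ (encoding the remaining idempotents at $q$ and the trunk beyond $q$); the commutation $P e_V = e_V P$ then closes this case. If $w$ lies instead inside a hanging subtree $S$ of $T$ rooted at some trunk vertex $r$, I would choose the birooted structure on $S$ with $\omega(S) := q$ so that $w$ becomes a terminal segment of the trunk of $S$, run the previous argument inside $S$ to obtain $[S]_M = [S']_M$, and then note that the resulting hanging-subtree contributions $[S]_M^+ = [S']_M^+$ at $r$ agree, so $[T]_M = [T']_M$. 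In the remaining case $\omega(T) \in V$, transplanting $V$ also relocates the end vertex and the trunks of $T$ and $T'$ have different shapes: the path $w$ with its intermediate idempotents forms part of the trunk of $T$, but becomes a dangling idempotent subtree at $p$ in $T'$ contributing $(P \cdot h)^+ = P \cdot h$ (where $h$ collects any residual idempotents at $q$ in $T - V$), using the fact that $P \cdot h$ is already idempotent as a product of commuting idempotents; after commuting idempotents the two factorisations coincide.

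The main obstacle is the bookkeeping required to set up the matching factorisations of $[T]_M$ and $[T']_M$ in these sub-cases (on/off trunk for $w$, and $\omega(T)$ in/out of $V$). Once these factorisations are in place, the algebraic heart of the argument is short: $P$ is idempotent by Theorem~\ref{thm:idemtrunk}, and idempotents in $M$ commute pairwise, so all the required rearrangements go through.
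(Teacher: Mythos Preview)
Your overall strategy---recognising $P = x_1 u_1^+ x_2 u_2^+ \cdots x_{k-1} u_{k-1}^+ x_k$ as idempotent via Theorem~\ref{thm:idemtrunk} and then commuting it with the idempotent contribution $e_V$ of $V$---is exactly the right idea, and your Cases A and C go through as written. However, your case split is incomplete. You have not handled the situation in which the path $w$ begins on the trunk of $T$, follows the trunk for at least one edge, and then branches off into a non-trunk subtree, while $\omega(T) \notin V$. Such a $w$ is neither entirely on the trunk (so not your Case A) nor contained in any hanging subtree whose contribution to $[T]_M$ is a single factor $[S]_M^+$ at a trunk vertex (so your Case B reduction does not apply: any subtree $S$ containing all of $w$ must include trunk edges of $T$, and then the trunk of $T$ passes through $S$, so $S$ does not appear in $[T]_M$ as a factor $[S]_M^+$).

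In this missing case, if $x_i$ is the first off-trunk edge of $w$, one has
\[
[T]_M \;=\; \bigl[\,U \cdot w_1 \cdot (w_2\, Q^+\, e_V)^+ \cdot W\,\bigr]_M,
\]
with $w_1 = x_1 u_1^+ \cdots x_{i-1} u_{i-1}^+$ recording the on-trunk portion and $w_2 = x_i u_i^+ \cdots x_k$ the off-trunk portion. Here $P$ never appears as a factor of $[T]_M$, and $e_V$ is trapped inside the $(\cdot)^+$; pure commutation of idempotents cannot pull it out. The paper supplies the missing manoeuvre: since $[w_1 w_2 Q^+ e_V]_M$ is idempotent (its trunk being $x_1\cdots x_k$, so Corollary~\ref{cor:idemtrunk} applies), Lemma~\ref{lem:leftadidents}\ref{prop:xy+} gives $w_1 (w_2 Q^+ e_V)^+ = w_1 w_2 Q^+ e_V\, w_1$, which unfolds the $(\cdot)^+$; one then commutes $e_V$ past the idempotent $w_1 w_2 Q^+$ and applies Lemma~\ref{lem:leftadidents}\ref{prop:xy+} once more to refold, obtaining $[T']_M$. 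This unfold--commute--refold step is the genuine extra ingredient your plan is missing.
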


\begin{proof}
    We first consider the case when $V$ contains the endpoint. Since $V$ contains the endpoint and contains $\omega(w)$, we must have that $w$ lies entirely on the trunk of $T$. Hence $T$ is of the form $Ux_1{u_1}^+x_2{u_2}^+\cdots x_{k-1}{u_{k-1}}^+x_k{u_k}^+V$ for some $X$-trees $U,u_1,\dots,u_{k}$. By Corollary \ref{cor:idemtrunk}, $[x_1{u_1}^+x_2{u_2}^+\cdots x_{k-1}{u_{k-1}}^+x_k{u_k}^+]_M$ is an idempotent of $M$, and thus \[[T]_M = [U(x_1{u_1}^+x_2{u_2}^+\cdots x_{k-1}{u_{k-1}}^+x_k{u_k}^+)^+V]_M.\] The $X$-tree $U(x_1{u_1}^+x_2{u_2}^+\cdots x_{k-1}{u_{k-1}}^+x_k{u_k}^+)^+V$ is exactly $T'$, and so we are done.

Now suppose that $V$ does not contain the endpoint of $T$. Let $x_i$ be the label of the first edge of $w$ which is not on the trunk of $T$. Then $T$ is of the form\[Ux_1{u_1}^+x_2{u_2}^+\cdots x_{i-1}{u_{i-1}}^+(x_i{u_i}^+x_{i+1}{u_{i+1}}^+\cdots x_{k-1}{u_{k-1}}^+x_kQ^+V)^+W\] for some $X$-trees $U,Q,u_1,\dots,u_{k-1},W$ (note that the subtree $V$ does not contain the endpoint of $T$). For notational simplicity, write $w_1 = x_1{u_1}^+x_2{u_2}^+\cdots x_i{u_i}^+$ and $w_2 = x_{i+1}{u_{i+1}}^+\cdots x_{k-1}{u_{k-1}}^+x_k$. Then in $M$, since the retraction map and $[\cdot]$ are morphisms, we have\begin{align*}
[T]_M &= [Uw_1(w_2Q^+V)^+W]_M && &&&\\
  & = [Uw_1(w_2Q^+V^+)^+W]_M && \textrm{\small(by Lemma \ref{lem:ladidentities})}&&& \\
  & = [Uw_1w_2Q^+V^+w_1W]_M && \textrm{\small(by Lemma \ref{lem:leftadidents}\ref{prop:xy+}: }[w_1w_2Q^+V^+]_M \in E(M) \textrm{\small)}&&& \\
  & = [UV^+w_1w_2Q^+w_1W]_M && \textrm{\small(by commuting idempotents)}&&& \\
  & = [UV^+w_1(w_2Q^+)^+W]_M && \textrm{\small(by Lemma \ref{lem:leftadidents}\ref{prop:xy+}: }[w_1w_2Q^+]_M \in E(M) \textrm{\small)}&&&         
\end{align*}

Recalling our definitions of $w_1$ and $w_2$, we have\[[T]_M = [UV^+x_1{u_1}^+x_2{u_2}^+\cdots x_i{u_i}^+(x_{i+1}{u_{i+1}}^+\cdots x_{k-1}{u_{k-1}}^+x_kQ^+)^+W]_M.\]This $X$-tree is exactly $T'$.\end{proof}

Theorem \ref{thm:movedown} tells us that we can move subtrees of cones down through paths labelled by idempotents (provided we do not disconnect the tree) and maintain the value of $[T]_M$. By noting that we may first glue an extra copies of $V$ at $\omega(w)$ without changing $[T]_{\mathrm{FLAd}(X)}$, a clear corollary of Theorem \ref{thm:movedown} is that we may similarly copy these subtrees, up or down, through idempotent-labelled paths.

\begin{cor}\label{cor:movethru}

    Suppose an $X$-tree $T$ contains a path $w$ labelled by $x_1x_2 \cdots x_k$ such that $[x_1x_2\cdots x_k]_M \in E(M)$. Suppose $V$ is a subtree of $\mathrm{Cone}_T(\omega(w))$ $\mathrm{[}$resp. $\mathrm{Cone}_T(\alpha(w))\mathrm{]}$, containing the vertex $\omega(w)$ $\mathrm{[}$resp. $\alpha(w)\mathrm{]}$ and not containing the vertex $\alpha(T)$, whose erasure would not disconnect the underlying tree of $T$. Then $[T]_M$ is equal in $M$ to $[T']_M$ where $T'$ is the tree $T$ with $V$ copied to $\alpha(w)$ $\mathrm{[}$resp. $\omega(w)\mathrm{]}$ (including moving the endpoint if $V$ contained it). Moreover, if $V$ does not contain any edge of $w$, we instead can move $V$ to $\alpha(w)$ $\mathrm{[}$resp. $\omega(w)\mathrm{]}$ rather than copying.
\end{cor}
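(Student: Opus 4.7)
The plan is to deduce all three parts of the corollary from Theorem~\ref{thm:movedown}, using Lemma~\ref{lem:retractfrommorph} to recognise that attaching a duplicate copy of a subtree at an existing vertex has no effect on the retracted image. The trick throughout is to exploit this duplication symmetry in order to reduce each ``copy'' operation to a single application of Theorem~\ref{thm:movedown}.

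For the copy from $\omega(w)$ to $\alpha(w)$ (the case $V \subseteq \mathrm{Cone}_T(\omega(w))$), I would first build an auxiliary tree $T^{+}$ by gluing an extra copy $V^{\ast}$ of $V$ at $\omega(w)$, leaving the distinguished endpoint where it was in $T$. The identity on $T$ extends to an $X$-graph morphism $T^{+} \to T$ that folds $V^{\ast}$ onto $V$, and since $T$ sits inside $T^{+}$ as a birooted subgraph this morphism witnesses, via Lemma~\ref{lem:retractfrommorph}, that $[T^{+}]_M = [T]_M$. Then I would apply Theorem~\ref{thm:movedown} to $T^{+}$, selecting the movable subtree to be the copy of $V$ carrying the endpoint (i.e.\ the original $V$); the theorem drags the endpoint with it to $\alpha(w)$, and the resulting tree is precisely $T'$.

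For the copy from $\alpha(w)$ to $\omega(w)$ (the case $V \subseteq \mathrm{Cone}_T(\alpha(w))$), I would proceed in the opposite direction, starting from the target $T'$. In $T'$ there is already a copy of $V$ hanging at $\omega(w)$ (bearing the endpoint if $V$ contained it), so Theorem~\ref{thm:movedown} applies to this copy and moves it down to $\alpha(w)$, yielding a tree $S$ with two copies of $V$ attached at $\alpha(w)$. A morphism $S \to T$ that folds these two copies onto each other preserves both distinguished vertices; Lemma~\ref{lem:retractfrommorph} then gives $[S]_M = [T]_M$, and chaining produces $[T']_M = [T]_M$.

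For the final assertion about moving rather than copying (under the hypothesis that $V$ shares no edge with $w$), the $\omega(w) \to \alpha(w)$ case is literally Theorem~\ref{thm:movedown}, while for the $\alpha(w) \to \omega(w)$ case I would observe that the tree $T_{\mathrm{moved}}$ has $V$ sitting inside $\mathrm{Cone}_{T_{\mathrm{moved}}}(\omega(w))$, so a single application of Theorem~\ref{thm:movedown} to $T_{\mathrm{moved}}$ returns $T$, yielding $[T]_M = [T_{\mathrm{moved}}]_M$. The main (and only genuine) bookkeeping challenge is to ensure that the folding morphisms fed into Lemma~\ref{lem:retractfrommorph} respect the distinguished endpoint, and that Theorem~\ref{thm:movedown} is applied in each case to the copy of $V$ which places the endpoint in the configuration demanded by the statement of $T'$.
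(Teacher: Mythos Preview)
Your proposal is correct and follows the same idea the paper sketches just before the corollary: duplicate $V$ at its current vertex (which, via Lemma~\ref{lem:retractfrommorph}, leaves the $\mathrm{FLAd}(X)$-value unchanged) and then invoke Theorem~\ref{thm:movedown} on the appropriate copy. The paper's own justification is a single sentence covering only the $\omega(w)\to\alpha(w)$ direction and declaring the rest ``clear''; your treatment is more thorough in that you separately handle the $\alpha(w)\to\omega(w)$ copy by running the argument from $T'$ back to $T$, and you explicitly dispatch both directions of the ``move'' assertion, but the underlying mechanism is identical.
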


\begin{cor}\label{cor:idemtoleaves}
    For any $X$-generated left adequate monoid $M$ and $X$-tree $T$, there exists a tree $T' \in \textrm{FLAd}(X)$ such that $[T]_M = [T']_M$, and in which all paths labelled by idempotents of $M$ go to leaves.
\end{cor}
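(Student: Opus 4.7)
The plan is to construct $T'$ from $T$ by iteratively applying Corollary \ref{cor:movethru}. If every non-empty idempotent-labelled path in $T$ already ends at a leaf, take $T' = T$; otherwise, I would identify a \emph{bad} vertex $v$ (a non-leaf endpoint of some non-empty idempotent-labelled path $w$) and set $V := \mathrm{Cone}_T(v)$. Since $V$ contains $\omega(w) = v$ and shares no edges with $w$ (the edges of $w$ terminate at $v$, whereas the edges of $V$ originate at $v$ or deeper), Corollary \ref{cor:movethru} applies in its move form, producing an equivalent $X$-tree in which the trunk portion corresponding to $w$ has become a $+$-pendant at $\alpha(w)$ whose far end (the former location of $v$) is a leaf, while $V$ has been re-attached at $\alpha(w)$. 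Iterating this operation, the goal is to reach a tree with no bad vertex.

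The main obstacle will be proving termination. Moving $V$ changes the root-to-$v'$ label of each $v' \in V$ from $A w B$ to $A B$, and $[AB]_M$ may turn out to be idempotent even when $[AwB]_M$ was not, so newly bad vertices can appear inside the relocated $V$. To handle this, I would assign to each intermediate tree the multiset $\mu$ of cone heights $\mathrm{height}(\mathrm{Cone}(u))$ over its bad vertices $u$, ordered by the Dershowitz--Manna multiset ordering on $\mathbb{N}$ (which is well-founded), and argue that $\mu$ strictly decreases at each step. The key observations supporting the decrease are: the processed vertex $v$ leaves the set of bad vertices, so its contribution $h_v := \mathrm{height}(\mathrm{Cone}_T(v))$ is removed from $\mu$; any newly bad vertex lies strictly below $v$ in the relocated $V$, so its cone is a proper subtree of $\mathrm{Cone}_T(v)$ and has height strictly less than $h_v$; and the cone heights of any still-bad ancestors of $\alpha(w)$ strictly decrease, since an old contribution of roughly $|w| + h_v$ from the $w$-then-$V$ structure is replaced by $\max(|w|, h_v, \ldots)$ in the new pendant-plus-$V$ structure. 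Every element added to $\mu$ is thus strictly less than some element removed, giving the desired strict decrease in the Dershowitz--Manna order and hence termination of the procedure at a tree $T'$ with the required property.
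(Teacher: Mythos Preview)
Your reduction step is exactly the paper's: pick an idempotent-labelled path $w$ whose endpoint $v=\omega(w)$ is not a leaf, and move the entire cone at $v$ to $\alpha(w)$ (the paper cites Theorem~\ref{thm:movedown} directly rather than Corollary~\ref{cor:movethru}, but this is the same operation). The difference is entirely in the termination argument, and here you are working much harder than necessary.

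The paper's termination is a one-liner: each move preserves the vertex set and turns exactly one non-leaf (namely $v$) into a leaf, while no leaf becomes a non-leaf (vertices outside the cone keep their out-edges, leaves inside the cone are carried along intact, and $\alpha(w)$ already had an out-edge). So the number of leaves strictly increases and is bounded by $|V(T)|$. This completely sidesteps the issue you worry about---new bad vertices appearing inside the relocated cone---because the leaf count does not care which vertices are bad.

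Your Dershowitz--Manna argument is essentially sound, but one of your supporting claims is not quite right: it is \emph{not} true that the cone heights of still-bad ancestors of $\alpha(w)$ strictly decrease. If such an ancestor $a$ has another branch of height at least $|w|+h_v$, then $\mathrm{height}(\mathrm{Cone}(a))$ is determined by that branch and is unchanged by the move. This does not actually break the multiset comparison (an unchanged contribution is neither added nor removed), but your justification as written is incorrect. Once this is patched, your argument goes through; it is simply more elaborate than needed. You should also note, as the paper does, that a final retraction is required to land in $\mathrm{FLAd}(X)$, and that retraction preserves the ``idempaths go to leaves'' property since the retract is a birooted subgraph.
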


\begin{proof}
    For any idempotent-labelled path $w$ in $T$, we may move the entire cone rooted at $\omega(w)$ to $\alpha(w)$ without changing the value in $M$ by Theorem \ref{thm:movedown}. This ensures that the path labelled $w$ terminates at a leaf. We claim that continually performing this action is a terminating process. Indeed, each move maintains the number of vertices of $T$ but strictly increases the number of leaf vertices of $T$. Since $T$ has finitely many vertices, this process must terminate: the result is a tree in which idempotent-labelled paths go to leaves. Retracting this tree then yields a tree $T' \in \textrm{FLAd}(X)$ with the required properties.
\end{proof}

\section{Idempath Identification}\label{sec:idemiden}

As in many areas of semigroup theory, understanding idempotents is critical to understanding the structure of left adequate monoids. We now consider presentation of left adequate monoids where define our set of idempotents. This is done in a similar spirit to that of Margolis-Meakin expansions for $E$-unitary inverse monoids \cite{mm:inv}; we explore this connection further in Section~\ref{sec:presentation}.

Let $X$ be any set and let $E \subseteq X^*$. Consider the left adequate monoid given by the presentation\[\textrm{LAd}\left\langle X\ |\ w^+ = w \textrm{ for } w \in E\right\rangle.\]The corresponding notion for Margolis-Meakin expansions is to take $E$ to be the set of words representing the identity element in some fixed $X$-generated group. In the left adequate setting, it is natural (see Section~\ref{sec:presentation}) to take $E$ to be the words representing the identity element in some fixed $X$-generated right cancellative monoid $C$. This leads us to our definition of \textit{idempath identification}.

\subsection{Idempath Identification} Let $G$ be an $X$-graph and let $C$ be an $X$-generated right cancellative monoid. A path in $G$ is called a $C$-\textit{idempath} (or simply an \textit{idempath} when $C$ is clear) if $[w]_C = 1$. We consider the empty path (labelled $\epsilon$) to have $[\epsilon]_C = 1$ (and thus is always an idempath).

A ($C$-)\textit{idempath identification} on $G$ is the process of identifying the vertices which are the termini of an idempath in $G$. Some literature refers to this action as the \textit{fusion} of the two vertices \cite{ramakrishnan:fusion}. Note that this is different than the process of \textit{contraction}; we do not delete any edges of $G$. An idempath identification $G$ is a morphism of $X$-graphs from $G$ to the image after the identification, and therefore it preserves expected adjacency and connectivity properties.

A $X$-graph $H$ is called a \textit{($C$-idempath) descendant} of an $X$-graph $G$ if $H$ is obtainable via successive idempath identifications, starting with the graph $G$. We denote by $D(G)$ the set of all descendants of $G$. Formally, we consider the set of vertex pairs $V(G) \times V(G)$ to act by partial maps on elements of $D(G)$ where $(a,b)H$ is the $X$-graph obtained by identifying the natural images of the vertices $a,b$ in $H$; accordingly we specify that this action is only defined when there is an idempath readable from natural images of $a$ to $b$ in $H$. For ease of notation, we often identify these images of $a$ and $b$ with $a$ and $b$ themselves.

\begin{example}
    Consider the group $C = C_3 = \textrm{Mon}\langle x\ |\ x^3 = 1 \rangle$. In an $\{x\}$-graph, a path labelled $x^3$ is a $C$-idempath. Figure \ref{fig:x3idems} shows a process of successive idempath identifications to the tree $x(x^2)^+x \in \textrm{FLAd}(\{x\})$. Note that the second identification is only possible once the first identification is made.
    \begin{figure}[ht]
        \centering
        {
        \begin{tikzpicture}
            \GraphInit[vstyle=Empty]
            \SetVertexSimple[MinSize = 1pt]
            \SetUpEdge[lw = 0.5pt]
            \tikzset{EdgeStyle/.style={->-}}
            \tikzset{VertexStyle/.append style = {minimum size = 3pt, inner sep = 0pt}}
            \SetVertexNoLabel
            \SetGraphUnit{2}
            
            \node (A) at ( 0,0) {\large$+$};
            \Vertex[x=0,y=1]{C1}
            \Vertex[x=-1,y=2]{L1}
            \Vertex[x=-2,y=3]{L2}
            \node (R1) at ( 1,2) {\large$\times$};
                      
            \Edge(A)(C1)\draw (A) -- (C1) node [midway, left=2pt] {$x$};
            \Edge(C1)(R1)\draw (C1) -- (R1) node [midway, right=2pt] {$x$};
            \Edge(C1)(L1)\draw (C1) -- (L1) node [midway, left=2pt] {$x$};
            \Edge(L1)(L2)\draw (L1) -- (L2) node [midway, left=2pt] {$x$};
            
        \end{tikzpicture}
        }\hspace{0.1\textwidth}
        {
        \begin{tikzpicture}
            \GraphInit[vstyle=Empty]
            \SetVertexSimple[MinSize = 1pt]
            \SetUpEdge[lw = 0.5pt]
            \tikzset{EdgeStyle/.style={->-}}
            \tikzset{VertexStyle/.append style = {minimum size = 3pt, inner sep = 0pt}}
            \SetVertexNoLabel
            \SetGraphUnit{2}
            
            \node (A) at ( 0,0) {\large$+$};
            \Vertex[x=0,y=1]{C1}
            \Vertex[x=-1,y=2]{L1}
            \node (R1) at ( 1,2) {\large$\times$};
                      
            \Edge(A)(C1)\draw (A) -- (C1) node [midway, left=2pt] {$x$};
            \Edge(C1)(R1)\draw (C1) -- (R1) node [midway, right=2pt] {$x$};
            \Edge(C1)(L1)\draw (C1) -- (L1) node [midway, left=2pt] {$x$};
            \Edge[style={bend right}](L1)(A)\node (l) at ( -1.2,0.8) {$x$};
            
        \end{tikzpicture}
        }\hspace{0.1\textwidth}
        {
        \begin{tikzpicture}
            \GraphInit[vstyle=Empty]
            \SetVertexSimple[MinSize = 1pt]
            \SetUpEdge[lw = 0.5pt]
            \tikzset{EdgeStyle/.style={->-}}
            \tikzset{VertexStyle/.append style = {minimum size = 3pt, inner sep = 0pt}}
            \SetVertexNoLabel
            \SetGraphUnit{2}
            
            \node (A) at ( 0,0) {\large$+$};
            \Vertex[x=0,y=1]{C1}
            \node (L1) at ( -1,2) {\large$\times$};
                      
            \Edge(A)(C1)\draw (A) -- (C1) node [midway, left=2pt] {$x$};
            \Edge[style={bend right}](C1)(L1)\node (l) at ( 0,1.8) {$x$};
            \Edge(C1)(L1)\draw (C1) -- (L1) node [midway, left=2pt] {$x$};
            \Edge[style={bend right}](L1)(A)\node (l) at ( -1.2,0.8) {$x$};
            
        \end{tikzpicture}}
        \caption{Successive $C_3$-idempath identifications on $x(x^2)^+x$.}
        \label{fig:x3idems}
    \end{figure}
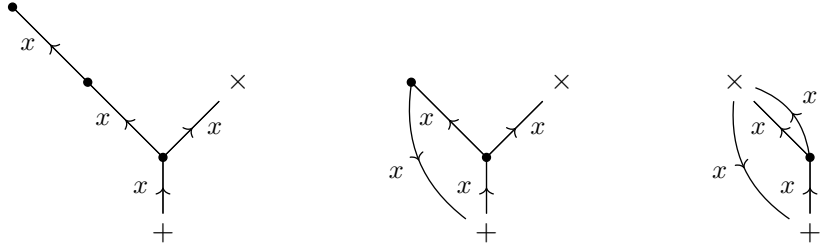    
\end{example}

Our goal is to show that any $X$-graph $G$ has a unique \textit{idempath identified} descendant, that is a graph where no non-trivial idempath identifications are possible (equivalently if every idempath is a cycle).

\begin{lem}\label{lem:descendant}
Let $C$ be an $X$-generated right cancellative monoid. Let $G$ be an $X$-graph. Suppose $(a,b)G$ is defined, $(c,d)G$ is defined, and $H \in D(G)$. Then
\begin{enumerate}[label={(\roman*)},itemindent=0.5em]
    \item $(a,b)H$ is defined;\label{lem:descendant1}
    \item $(a,b)(c,d)G = (c,d)(a,b)G$;\label{lem:descendant2}
    \item $(a,b)H \in D((a,b)G)$.
\end{enumerate}
\end{lem}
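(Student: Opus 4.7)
The plan is to prove the three parts in order, each building on the previous. The underlying observation (already made in the paragraph defining idempath identification) is that each idempath identification is a morphism of $X$-graphs, and hence any composition of such identifications is also an $X$-graph morphism. Such morphisms preserve edges and their labels, so they send paths to paths with identical labels; since being a $C$-idempath depends only on the label evaluating to $1$ in $C$, they in particular send idempaths to idempaths.

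For part \ref{lem:descendant1}, I would take a sequence of idempath identifications $G = G_0, G_1, \ldots, G_n = H$ witnessing $H \in D(G)$ and compose them to obtain an $X$-graph morphism $\phi \colon G \to H$; by construction $\phi(a)$ and $\phi(b)$ are precisely the natural images of $a$ and $b$ in $H$. Since $(a,b)G$ is defined, there is an idempath in $G$ from $a$ to $b$ labelled by some word $w$ with $[w]_C = 1$, and its $\phi$-image is a path from $\phi(a)$ to $\phi(b)$ carrying the same label $w$, hence an idempath. Therefore $(a,b)H$ is defined.

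For part \ref{lem:descendant2}, both sides are defined by part \ref{lem:descendant1}, so it suffices to identify each with the quotient $X$-graph of $G$ whose vertex set is $\mathrm{V}(G)$ modulo the equivalence relation generated by $a \sim b$ and $c \sim d$, with edge set, labels, and roots inherited from $G$ via the quotient map. Since the generating relation is symmetric in the two pairs, the order in which they are imposed is immaterial, and both $(a,b)(c,d)G$ and $(c,d)(a,b)G$ agree with this common quotient. For part (iii), I would induct on the length $n$ of a sequence of identifications witnessing $H \in D(G)$: the case $n=0$ is trivial, and for $n \geq 1$ we may write $H = (e,f)H'$ with $H' \in D(G)$ reached by $n-1$ identifications. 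The inductive hypothesis gives $(a,b)H' \in D((a,b)G)$, and applying part \ref{lem:descendant1} (to check definedness) together with part \ref{lem:descendant2} to $H'$ yields $(a,b)H = (a,b)(e,f)H' = (e,f)(a,b)H'$, which arises from $(a,b)H'$ by a single idempath identification and therefore lies in $D((a,b)H') \subseteq D((a,b)G)$.

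The only step requiring genuine care is part \ref{lem:descendant1}, where one must track what "natural image" means along a descent of arbitrary length; this is precisely where the fact that compositions of identification morphisms are $X$-graph morphisms does the real work. Once \ref{lem:descendant1} is in place, \ref{lem:descendant2} reduces to a clean quotient statement and (iii) is a routine induction on descent length.
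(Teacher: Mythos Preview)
Your proof is correct and follows essentially the same approach as the paper: part~(i) uses that identification morphisms preserve labelled paths, part~(iii) commutes $(a,b)$ past the sequence of identifications using part~(ii), and the only cosmetic difference is that for part~(ii) you phrase the argument uniformly in terms of the quotient by the equivalence relation generated by $a\sim b$ and $c\sim d$, whereas the paper does a short case split on whether $\{a,b\}$ and $\{c,d\}$ intersect.
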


\begin{proof}
Recall that $(p,q)K$ is only defined when there is an idempath readable from $p$ to $q$ in the graph $K$.
    \begin{enumerate}[label={(\roman*)},itemindent=0.5em]
        \item Since $(a,b)G$ is defined, there is an idempath labelled by $x_1 \cdots x_n$ from $a$ to $b$ in $G$. This word is still readable in $H$ from $a$ to $b$ in $H$. Hence $(a,b)H$ is defined.
        \item By part \ref{lem:descendant1}, $(c,d)(a,b)G$ and $(a,b)(c,d)G$ are both defined. If $\{a,b\} \cap \{c,d\} = \emptyset$, then clearly $(a,b)(c,d)G = (c,d)(a,b)G$. Otherwise, if $\{a,b\} \cap \{c,d\} \neq \emptyset$, then all of $a,b,c$ and $d$ are identified in the graphs $(a,b)(c,d)G$ and $(c,d)(a,b)G$. Since both graphs are the unique graph obtained by identifying all four vertices, it follows that $(a,b)(c,d)G = (c,d)(a,b)G$.
        \item Since $H \in D(G)$, $H$ is of the form $H = (u_n,v_n)\dots(u_1,v_1)G$ for vertices $u_i,v_i$. Then using part \ref{lem:descendant2} $n$ times, we have
    \[ (a,b)H = (a,b)(u_n,v_n)\dots(u_1,v_1)G = (u_n,v_n)\dots(u_1,v_1)(a,b)G \in D((a,b)G)\]as required.
    \end{enumerate}\end{proof}

\begin{thm}\label{thm:uniquetilde}
Let $C$ be an $X$-generated right cancellative monoid. Let $G$ be an $X$-graph. Then there is a unique graph which is an idempath identified descendant of $G$.
\end{thm}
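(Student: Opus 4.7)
The plan is to establish existence by a termination argument and uniqueness by a confluence-style argument resting on Lemma \ref{lem:descendant}.

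\emph{Existence.} Any non-trivial $C$-idempath identification on a finite graph strictly reduces the number of vertices (it fuses two distinct vertices into one). Since $G$ has finitely many vertices, any maximal chain of successive idempath identifications must terminate, and the terminal graph admits no further non-trivial identifications---equivalently, every idempath in it is a cycle. So an idempath-identified descendant exists.

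\emph{Uniqueness.} Let $H_1$ and $H_2$ be two idempath-identified descendants of $G$, and fix a presentation $H_2 = (a_m, b_m) \cdots (a_1, b_1) G$. I would then apply the same sequence of operations to $H_1$ in order. Using Lemma \ref{lem:descendant}\ref{lem:descendant1} at each stage (with the base graph replaced by the appropriate descendant of $G$, which is justified by iterating part (iii) of the same lemma), each operation $(a_i, b_i)$ is defined on the intermediate graph $(a_{i-1}, b_{i-1}) \cdots (a_1, b_1) H_1$. Crucially, since $H_1$ is idempath identified, every idempath in $H_1$ is a cycle, so the natural images of $a_i$ and $b_i$ in $H_1$ must already coincide, making the operation the identity. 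A straightforward induction then yields $(a_m, b_m) \cdots (a_1, b_1) H_1 = H_1$.

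On the other hand, iterating Lemma \ref{lem:descendant}(iii) gives $(a_m, b_m) \cdots (a_1, b_1) H_1 \in D\bigl((a_m, b_m) \cdots (a_1, b_1) G\bigr) = D(H_2)$. Combining the two conclusions, $H_1 \in D(H_2)$. But the same argument as just given, applied to $H_2$ in place of $H_1$, shows that every operation applicable to $H_2$ is trivial, so $D(H_2) = \{H_2\}$ and hence $H_1 = H_2$. The main subtlety I anticipate is the bookkeeping around \emph{natural images}: to apply an operation $(a_i,b_i)$ at an intermediate stage one must verify it remains well-defined under the identifications already made, and this is precisely where the iterated use of Lemma \ref{lem:descendant}(iii) together with the commutativity statement Lemma \ref{lem:descendant}\ref{lem:descendant2} earns its keep.
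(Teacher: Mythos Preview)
Your proof is correct and follows essentially the same approach as the paper: existence by termination on vertex count, and uniqueness by showing that any idempath-identified descendant $H_1$ lies in $D(H_2)$ via iterated application of Lemma~\ref{lem:descendant}, using that each operation $(a_i,b_i)$ acts trivially on $H_1$. Your wrap-up via $D(H_2)=\{H_2\}$ is in fact slightly cleaner than the paper's, which proves $Y_1\in D(Y_2)$ and $Y_2\in D(Y_1)$ symmetrically before concluding.
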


\begin{proof}
Since $G$ is finite, we may only apply a finitely long sequence of non-trivial idempath identifications to $G$ as each strictly reduces the number of vertices. By always performing any non-trivial idempath identifications we can, we may obtain some graph which has no non-trivial identifications. This graph must be an idempath identified descendant of $G$, and so $G$ has some idempath identified descendant $Y \in D(G)$.

Now let $H \in D(G)$. Then there exists some $n \geq 0$ such that \[H = (u_n,v_n)\dots(u_1,v_1)G\]for some vertices $u_i$ and $v_i$ of $G$ for $1 \leq i \leq n$. We claim that $Y \in D(H)$. We perform induction on $n$.

If $n=1$, then $H = (u_1,v_1)G$ and so by Lemma \ref{lem:descendant}, $(u_1,v_1)Y \in D(H)$. But $Y$ is idempath identified, so as this is defined, we must have $u_1 = v_1$ in $Y$. Hence $Y = (u_1,v_1)Y \in D(H)$ as required. Now suppose that the statement holds for some integer $k-1 \geq 1$. Then suppose $H = (u_k,v_k)\dots(u_1,v_1)G$. By the inductive hypothesis, $Y$ is a descendant of $(u_{k-1},v_{k-1})\dots(u_1,v_1)G$. Hence by Lemma \ref{lem:descendant}, $(u_k,v_k)Y$ is a descendant of $(u_k,v_k)(u_{k-1},v_{k-1})\dots(u_1,v_1)G = H$. As above, since $Y$ is idempath identified, $(u_k,v_k)Y = Y$. Hence $Y$ is a descendant of $H$ as required. Our claim therefore holds by induction.

It remains to show uniqueness of $Y$. Indeed, if $Y_1$ and $Y_2$ are idempath identified descendants of $G$ then by the inductive argument above, $Y_1$ is a descendant of $Y_2$ and vice versa. Hence clearly $Y_1 \cong Y_2$ as $X$-graphs.
\end{proof}

We denote the unique $C$-idempath identified descendant of $G$ by $\widetilde{G}$. As idempath identifications are morphisms, the induced map $\sim\ :\ G \mapsto \widetilde{G}$ is also a morphism of $X$-graphs. Given an edge $e$ [resp. vertex $v$] of any descendant of $G$, we denote the image of $e$ [resp. $v$] under the $\sim$ map by $\widetilde{e}$ [resp. $\widetilde{v}$].

\begin{lem}\label{lem:idempathsSCC}
    Let $G$ be an $X$-graph. Let $w$ be an idempath in $G$ traversing vertices $v_0,v_1,\dots,v_k$. Then the vertices $\widetilde{v_0},\widetilde{v_1},\dots,\widetilde{v_k}$ all lie in the same strongly connected component of $\widetilde{G}$.
\end{lem}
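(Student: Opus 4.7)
The plan is to reduce the statement to the special case $i = 0$, $j = k$, by showing first that $\widetilde{v_0} = \widetilde{v_k}$, and then observing that each intermediate $\widetilde{v_i}$ is reachable from, and co-reachable to, $\widetilde{v_0}$ via the image of the path $w$.

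\textbf{Step 1.} Since the iterated identification map $\sim : G \to \widetilde{G}$ is a composition of idempath identifications and hence a morphism of $X$-graphs, it carries the path $w$ (with label $x_1 \cdots x_k$, say) to a path $\widetilde{w}$ in $\widetilde{G}$ from $\widetilde{v_0}$ to $\widetilde{v_k}$ traversing $\widetilde{v_0}, \widetilde{v_1}, \ldots, \widetilde{v_k}$ and carrying the same label. In particular $\widetilde{w}$ is itself an idempath of $\widetilde{G}$.

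\textbf{Step 2.} Because $\widetilde{G}$ is idempath identified, every idempath in $\widetilde{G}$ is a (closed) cycle. Applied to $\widetilde{w}$ this forces $\widetilde{v_0} = \widetilde{v_k}$.

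\textbf{Step 3.} For each $0 \le i \le k$, the prefix of $w$ from $v_0$ to $v_i$ is carried by $\sim$ to a directed path from $\widetilde{v_0}$ to $\widetilde{v_i}$ in $\widetilde{G}$, and the suffix of $w$ from $v_i$ to $v_k$ is carried to a directed path from $\widetilde{v_i}$ to $\widetilde{v_k} = \widetilde{v_0}$. Hence $\widetilde{v_0}$ and $\widetilde{v_i}$ are mutually reachable in $\widetilde{G}$, and so lie in a common strongly connected component. Since strong connectivity is an equivalence relation on vertices, all of $\widetilde{v_0}, \ldots, \widetilde{v_k}$ lie in the same strongly connected component.

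There is no real obstacle here; the substantive content is Theorem~\ref{thm:uniquetilde} and the characterisation of $\widetilde{G}$ as the unique idempath identified descendant, both already established. The only point that needs a brief word of care is Step 2, where one uses the stated equivalent formulation of ``idempath identified'' (no non-trivial idempath identification is possible, i.e.\ every idempath is a cycle) to force $\widetilde{v_0} = \widetilde{v_k}$.
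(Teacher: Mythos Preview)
Your proof is correct and follows essentially the same approach as the paper: both show that the images $\widetilde{v_i}$ lie on a directed cycle in $\widetilde{G}$, hence in a common strongly connected component. The only cosmetic difference is that the paper first performs the identification $(v_0,v_k)$ on $G$ to obtain the cycle in the descendant $(v_0,v_k)G$ and then notes it persists in $\widetilde{G}$, whereas you map directly to $\widetilde{G}$ and invoke the idempath-identified property to force $\widetilde{v_0}=\widetilde{v_k}$; the underlying idea is the same.
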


\begin{proof}
    There exists a valid idempath identification $(v_0,v_k)$ on $G$. In $(v_0,v_k)G$, the images of the vertices $v_i$ all lie on a directed cycle, in particular in the same strongly connected component. It follows that they remain on a directed cycle in all descendants of $(v_0,v_k)G$, in particular in $\widetilde{G}$.
\end{proof}

\subsection{Constructable Semiwalks}

Recall the example illustrated in Figure \ref{fig:x3idems} -- the rightmost graph is the idempath identified descendant of the leftmost tree. This example illustrates that not all pairs of $\sim$-identified vertices have directed paths between; idempath identifications may generate new possible identifications. In order to classify which vertices of a graph $G$ are identified in $\widetilde{G}$, we develop the notion of \textit{$n$-constructable semiwalks}.

Let $G$ be an $X$-graph. A \textit{semiwalk} $\mathfrak{s}$ in $G$ is a sequence of edges $e_1,e_2,\dots,e_k$ with which each edge is assigned an orientation called \textit{positive} or \textit{negative}, such that $e_1e_2\dots e_k$ is a sequence of consecutively incident edges in the underlying, undirected graph of $G$ and the orientation of an edge refers to if it is traversed according to its direction in $G$, or in the reverse of its direction in $G$ via the induced path on the undirected graph. We say the semiwalk has \textit{start vertex} $\alpha(\mathfrak{s})$ and \textit{end vertex} $\omega(\mathfrak{s})$ given by the start and end of this path respectively. Semiwalks may be thought of as paths in $G$ where direction is ignored. We introduce a set of formal symbols \[X^{-1} = \left\{x^{-1}\ |\ x \in X \right\}\] which are disjoint from and in bijection with $X$. We will frequently consider words $w \in {\left(X \cup X^{-1} \right)}^*$, which we call \textit{positive} if $w \in X^*$ and \textit{negative} if $w \in \left(X^{-1}\right)^*$. The \textit{inverse} of a word $w = x_1\cdots x_k \in {\left(X \cup X^{-1} \right)}^* $ is $w^{-1} := {x_k}^{-1}\cdots{x_1}^{-1}$ with the convention that $\left(x^{-1} \right)^{-1} = x$ for all $x \in X$.

A semiwalk $\mathfrak{s}$ with edges $e_1,e_2,\dots,e_k$ in $G$ has \textit{label} $x_1\cdots x_k \in {\left(X \cup X^{-1}\right)}^*$ where \[x_i = \left\{
        \begin{array}{ll}
          \lambda(e_i)&\textrm{ if } e_i \textrm{ is traversed positively by }\mathfrak{s} \\
          \lambda(e_i)^{-1}&\textrm{ if } e_i \textrm{ is traversed negatively by }\mathfrak{s}
        \end{array}
      \right.\]

\begin{example}\label{ex:semiwalk}
    Let $X = \left\{ x,y \right\}$. The graph in Figure \ref{fig:semiwalk} has a semiwalk from $u$ to $v$ with label $y^{-1}x^{-1}x^{-1}y^{-1}yxyxyyxx$. The label of this semiwalk is neither positive nor negative.
    \begin{figure}[ht]
        \centering
        \begin{tikzpicture}
            \GraphInit[vstyle=Empty]
            \SetVertexSimple[MinSize = 1pt]
            \SetUpEdge[lw = 0.5pt]
            \tikzset{EdgeStyle/.style={->-}}
            \tikzset{VertexStyle/.append style = {minimum size = 3pt, inner sep = 0pt}}
            \SetVertexNoLabel
            \SetGraphUnit{2}
            
            \node (A) at ( 0,0) {\large$+$};
            \Vertex[x=-2,y=0]{C1}
            \node (C2) at ( -2,2) {\large$\times$};
            \Vertex[x=0,y=2]{C3}

            \Vertex[x=2,y=2]{B1}
            \Vertex[x=4,y=2]{B2}
            \Vertex[x=6,y=2]{B3}
            \node (labu) at ( 6.5,2) {\large$u$};

            \Vertex[x=2,y=0]{D1}
            \Vertex[x=4,y=0]{D2}
            \Vertex[x=6,y=0]{D3}
            
            \node (labv) at ( 6.5,0) {\large$v$};
            
            \Edge(A)(C1)\draw (A) -- (C1) node [midway, below=2pt] {$x$};
            \Edge(C1)(C2)\draw (C1) -- (C2) node [midway, left=2pt] {$y$};
            \Edge(C2)(C3)\draw (C2) -- (C3) node [midway, above=2pt] {$x$};
            \Edge(C3)(A)\draw (C3) -- (A) node [midway, left=2pt] {$y$};

            \Edge(A)(B1)\draw (A) -- (B1) node [midway, left=2pt,above=2pt] {$x$};
            \Edge(B1)(B2)\draw (B1) -- (B2) node [midway, above=2pt] {$x$};
            \Edge(B2)(B3)\draw (B2) -- (B3) node [midway, above=2pt] {$y$};

            \Edge(A)(D1)\draw (A) -- (D1) node [midway, below=2pt] {$y$};
            \Edge(D1)(D2)\draw (D1) -- (D2) node [midway, below=2pt] {$x$};
            \Edge(D2)(D3)\draw (D2) -- (D3) node [midway, below=2pt] {$x$};
            
        \end{tikzpicture}
        \caption{An $X$-graph with $X = \left\{ x,y \right\}.$}
        \label{fig:semiwalk}
    \end{figure}
\end{example}

For an $X$-generated right cancellative monoid $C$, recall that an idempath identification on $G$ is the process of identifying the endpoints of an idempath in $G$. Any idempath may be traversed by two semiwalks on $G$ in opposite directions, with labels $w$ and $w^{-1}$ (depending on the direction of traversal) where $[w]_C = 1$. Our goal for the remainder of this section is to describe which pairs of vertices in a graph are identified, using the notion of semiwalks.
  
Let $\ell$ be a word over $X \cup X^{-1}$. We say that $\ell$ is \textit{$n$-constructable} (\textit{with respect to $C$}) if $\ell$ can be obtained from the empty word by successively inserting $n$ words $w_i$ or ${w_i}^{-1}$ where $w_i \in X^*$ and $[w_i]_C = 1$, from the empty word. We say a semiwalk on a graph is \textit{$n$-constructable} if its label is $n$-constructable. We call a semiwalk or label \textit{constructable} if it is $n$-constructable for some $n$.

The only $0$-constructable word is the empty word $\epsilon$. The $1$-constructable words are exactly those representing idempotents of $M$ (or their formal inverses). This notion allows us to classify vertices which will be identified under the process of idempath identification.

\begin{thm}\label{thm:whenidentify}
    Fix an $X$-generated right cancellative monoid $C$. Two vertices $u$ and $v$ of an $X$-graph $G$ are identified in $\widetilde{G}$ if and only if there exists a constructable semiwalk between $u$ and $v$ in $G$.
\end{thm}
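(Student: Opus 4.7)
The plan is to prove both implications by induction, using the uniqueness of the idempath-identified descendant (Theorem~\ref{thm:uniquetilde}) to move freely between graphs in $D(G)$.

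For the forward direction, I would induct on $n$, proving the stronger statement that in \emph{any} $X$-graph $H$, if an $n$-constructable semiwalk between $u$ and $v$ exists, then $u$ and $v$ are identified in $\widetilde{H}$. The base case $n = 0$ is immediate, as the only $0$-constructable label is the empty word. For the inductive step, write an $(n{+}1)$-constructable label as $\ell = \ell_1 w^{\pm 1} \ell_2$, where $\ell_1 \ell_2$ is $n$-constructable, $w \in X^*$, and $[w]_C = 1$. The $w^{\pm 1}$ segment corresponds to a directed path labelled $w$ in $H$ between a pair of vertices $v_m, v_{m'}$ of the semiwalk, which is therefore an idempath; performing the corresponding idempath identification yields $H' \in D(H)$ in which $v_m$ and $v_{m'}$ have been merged. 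In $H'$ the two remaining segments glue into a semiwalk from $u$ to $v$ with label $\ell_1 \ell_2$, which is $n$-constructable, so by the inductive hypothesis $u$ and $v$ are identified in $\widetilde{H'}$. Uniqueness gives $\widetilde{H'} = \widetilde{H}$, completing the step.

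For the backward direction, fix a sequence of idempath identifications $G = G_0, G_1, \ldots, G_m = \widetilde{G}$. I would prove by induction on $i$ that whenever two vertices $p,q$ of $G$ have the same image in $G_i$, there is a constructable semiwalk from $p$ to $q$ in $G$; the case $i=0$ is trivial. For the inductive step with $G_{i+1} = (a_i, b_i) G_i$ and idempath label $w_i = x_1 \cdots x_{k_i}$, the equivalence of images in $G_{i+1}$ is generated by the equivalence in $G_i$ together with the single identification of $a_i$ with $b_i$, so it suffices to find a constructable semiwalk in $G$ between any $r, r'$ whose images in $G_i$ are $a_i$ and $b_i$. The idempath edges $e_1, \ldots, e_{k_i}$ of $G_i$ lift uniquely to edges $\tilde{e}_1, \ldots, \tilde{e}_{k_i}$ in $G$ (since idempath identifications are bijections on edges); say $\tilde{e}_j$ runs from $s_j$ to $t_j$ in $G$. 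Then $r$ and $s_1$, each pair $t_j$ and $s_{j+1}$, and $t_{k_i}$ and $r'$ share images in $G_i$, so the inductive hypothesis provides constructable semiwalks $\tau_0, \tau_1, \ldots, \tau_{k_i}$ with labels $\sigma_0, \sigma_1, \ldots, \sigma_{k_i}$. Their concatenation with the $\tilde{e}_j$ is a semiwalk from $r$ to $r'$ in $G$ with label $\sigma_0 x_1 \sigma_1 \cdots x_{k_i} \sigma_{k_i}$.

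The main technical obstacle is to verify that this combined label is itself constructable. The idea is to first insert $w_i$ as a single valid insertion (since $[w_i]_C = 1$), and then realise each $\sigma_j$ at its intended position by replaying its defining sequence of insertions with positions shifted by the length of the prefix already laid down. This relies on two easy closure properties of the set of constructable words, both proved by induction on the number of insertions: closure under concatenation, and closure under inserting a constructable word at any position within another constructable word. With these lemmas in hand, concatenating constructable semiwalks along the full chain of $G_i$-equivalences produced by the outer induction yields the required semiwalk from $u$ to $v$.
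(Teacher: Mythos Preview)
Your proposal is correct. The forward direction matches the paper exactly: peel off the last inserted word, pass to the corresponding descendant, and induct. For the converse the paper packages the key step as a one-step lifting lemma (Lemma~\ref{lem:semiwalkconstr}): given an $m$-constructable semiwalk in $(x,y)H$, lift its \emph{semiwalk} edges to $H$ and plug each gap with a copy of the idempath label (or its inverse), then iterate down the chain of descendants. You instead run the induction directly in $G$, lifting the \emph{idempath} edges of $G_i$ all the way back to $G$ and plugging the gaps with constructable semiwalks in $G$ supplied by the inductive hypothesis. Both routes rely on the same closure fact (constructable labels are closed under insertion of constructable labels), so the difference is organizational rather than substantive; your version folds everything into a single induction at the cost of invoking the closure property explicitly, while the paper's version isolates a reusable single-step lift.
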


\begin{proof}
    First suppose there exists an $n$-constructable semiwalk between $u$ and $v$ in $G$; suppose it has label $\ell$. If the word is empty then $u=v$ so $u$
    and $v$ are trivially identified, so suppose it is non-empty. Consider the final word $w \in X^* \cup \left(X^{-1}\right)^*$ inserted in an $n$-construction of $\ell$. We may write $\ell = \ell_1w\ell_2$ for some words $\ell_1,\ell_2 \in \left(X \cup X^{-1}\right)^*$. As $w$ is a subword of $\ell$, it corresponds to an idempath $e$ in $G$. Hence there is a valid idempath identification $(w_1,w_2)$ on $G$ where $w_1,w_2 \in \{\alpha(e),\omega(e)\}$. Moreover, the word $\ell_1\ell_2$, which is $(n-1)$-constructable, is the label of a semiwalk in $(w_1,w_2)G$ between the natural images of $u$ and $v$. If $n \geq 2$, this descendant may not be idempath identified. But we may now repeat this argument with the last idempotent inserted in an $(n-1)$-construction of $\ell_1\ell_2$ to find a deeper descendant of $G$ in which there is an $(n-2)$-constructable semiwalk between the images of $u$ and $v$, and so on. This process will eventually terminate (as $n$ is finite) and thus it follows that there is some descendant $G'$ of $G$ in which there is a $0$-constructable semiwalk (i.e. the empty path) between the images of $u$ and $v$. Thus $u$ and $v$ are identified in all descendants of $G'$, in particular in $\widetilde{G'} = \widetilde{G}$ as required.

    For the converse, we utilise the following lemma.
    \begin{lem}\label{lem:semiwalkconstr}
        Let $x,y,u,v$ be vertices of $G$. Denote the natural images of $x,y,u,v$ in any descendant $K$ of $G$ by $x_K, y_K, u_K$ and $v_K$ respectively. Let $H$ be a descendant of $G$ and suppose $(x_H,y_H)$ is a well-defined idempath identification on $H$. Suppose $\ell$ is a label of an $m$-constructable semiwalk from $u_{(x,y)H}$ to $v_{(x,y)H}$ in $(x,y)H$ for some $m \in \mathbb{N}_0$. Then there exists an $n$-constructable semiwalk from $u_H$ to $v_H$ in $H$ for some $n\in\mathbb{N}_0$.
    \end{lem}
    \begin{proof}
        Suppose the semiwalk labelled $\ell$ traverses the edges $p_1,p_2,\dots p_k$ in $(x,y)H$. Consider the pre-image of these $p_i$ in $H$. For notation, we ignore the direction of each $p_i$, and consider $\alpha(p_{i+1}) = \omega(p_i)$ to be the vertex of $(x,y)H$ traversed by the semiwalk between $p_i$ and $p_{i+1}$.

        For each $1 \leq i \leq k-1$, if $\omega(p_i)_H$ and $\alpha(p_{i+1})_H$ are identified in $H$, then define $w_i = \epsilon$. Otherwise, $\omega(p_i)_H$ and $\alpha(p_{i+1})_H$ are not identified in $H$. Since they are identified in $(x,y)H$, the idempath identification $(x_H,y_H)$ identified $\omega(p_i)_H$ with $\alpha(p_{i+1})_H$. Hence $(x_H,y_H) = (\omega(p_i)_H, \alpha(p_{i+1})_H)$ or $(x_H,y_H) = (\alpha(p_{i+1})_H,\omega(p_i)_H)$ and thus there is an idempath between $\omega(p_i)_H$ and $\alpha(p_{i+1})_H$ in $H$. If this idempath is oriented towards $\alpha(p_{i+1})_H$, take $w_i$ to be its label, otherwise take $w_i$ to be the inverse of its label. Perform a similar construction with $u_H$ and $\alpha(p_1)_H$ to construct a word $w_0$, and with $\omega(p_k)_H$ and $v_H$ to construct $w_k$.
        
        Write $\ell = a_1\cdots a_k \in (X \cup X^{-1})^*$, such that each $a_i$ is $\lambda(p_i)$ or $\lambda(p_i)^{-1}$ (depending on direction traversed). By construction, we have a valid connected semiwalk in $H$ from $u_H$ to $v_H$ with label $\ell' := w_0a_1w_1a_2w_2\cdots a_kw_k$. Since each $w_i$ labels an idempath, $\ell'$ is $(m+k+1)$-constructable via $k+1$ inserts into $\ell$.
    \end{proof}

    We now return to the proof of Theorem~\ref{thm:whenidentify}. Suppose $u,v$ are identified in $\widetilde{G}$. Then there is some descendant $G' = (x_m,y_m)\dots(x_1,y_1)G$ in which $u_{G'}$ and $v_{G'}$ are identified, that is there is an (empty) $0$-constructable semiwalk from $u_{G'}$ to $v_{G'}$ in $G'$. By $m$ successive applications of Lemma \ref{lem:semiwalkconstr}, we may obtain a constructable semiwalk from $u$ to $v$ in $G$.
\end{proof}

\subsection{Semiwalks on Trees and Two-sided Cancellativity} We will often be interested in how constructable semiwalks interact with trees. Let $C$ be an $X$-generated right cancellative monoid and let $T$ be an $X$-tree. In the case where $C$ is two-sided cancellative, we may observe properties of how constructable semiwalks interact with the $C$-value of co-terminal paths.

\begin{lem}\label{lem:pathsintilde}
Let $C$ be an $X$-generated two-sided cancellative monoid and $T$ be an $X$-tree.
  Then every retract of every descendant of $T$ has the properties that any cycle is an idempath and if $w,w'$ label co-terminal paths between distinct vertices $u$ to $v$ in $G$, then $[w]_C = [w']_C$. In particular, $\widetilde{T}$ has these properties.
\end{lem}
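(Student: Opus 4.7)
The plan is to prove a stronger invariant by induction on the number of idempath identifications used to construct $G$ from $T$: for every descendant $G$ of $T$, there is a function $f_G$ on the vertices of $G$ with $f_G(\alpha(G)) = 1$ such that every directed path $\pi$ from $\alpha(G)$ to a vertex $v$ in $G$ satisfies $[\lambda(\pi)]_C = f_G(v)$. Both claimed properties follow from this invariant by applying left cancellativity of $C$ (the ingredient added by the two-sided assumption), and they transfer from a descendant to any of its retracts, which embed as birooted subgraphs.

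The base case $G = T$ is immediate, since a tree with edges directed away from the root has a unique directed path from $\alpha(T)$ to each vertex, so $f_T(v)$ is defined by the label of this unique path. For the inductive step, let $G' = (a,b)G$ be obtained via identification of the endpoints of an idempath from $a$ to $b$ in $G$. Since the idempath has $C$-value $1$, the inductive hypothesis yields $f_G(b) = f_G(a) \cdot 1 = f_G(a)$, so the identified vertex of $G'$ receives a consistent value. A directed path $\pi$ in $G'$ from $\alpha(G')$ to $v$ lifts edge-by-edge (idempath identification merges vertices but not edges) to a sequence of maximal directed subpaths $\pi_1, \ldots, \pi_k$ in $G$ with $\lambda(\pi) = \lambda(\pi_1) \cdots \lambda(\pi_k)$; for each $i$ the vertices $\omega(\pi_i)$ and $\alpha(\pi_{i+1})$ either coincide in $G$ or together form $\{a, b\}$, so in both cases $f_G(\omega(\pi_i)) = f_G(\alpha(\pi_{i+1}))$. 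Iterating the identity $f_G(\alpha(\pi_{i+1})) \cdot [\lambda(\pi_{i+1})]_C = f_G(\omega(\pi_{i+1}))$ starting from $f_G(\alpha(\pi_1)) = 1$ delivers $[\lambda(\pi)]_C = f_G(\omega(\pi_k))$, a value depending only on $v$.

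To derive the co-terminal-paths property for a descendant $G$, given two directed paths from $u$ to $v$ with labels $w, w'$ and $u \neq v$, prepend any directed path from $\alpha(G)$ to $u$ with label $s$ (guaranteed by the reachability axiom for $X$-graphs); the invariant gives $[sw]_C = [sw']_C = f_G(v)$, and left-cancellativity of $C$ yields $[w]_C = [w']_C$. For a cycle at $v$ with label $w$ the analogous prepending and left-cancellation gives $[w]_C = 1$, so the cycle is an idempath (when $v = \alpha(G)$ the invariant applies directly). Finally, if $R$ is the image of a retract of a descendant $G$, then $R$ is a birooted subgraph of $G$, so any cycle or pair of co-terminal paths in $R$ is also present in $G$ and inherits the property. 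The main subtlety is the inductive step: when consecutive edges in a lift meet at $a$ and $b$ in either order, the equality $f_G(a) = f_G(b)$ --- which holds precisely because the idempath contributes the identity of $C$ --- is what keeps the accumulating $C$-value independent of the choice of lift.
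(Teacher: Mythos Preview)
Your proof is correct and takes a genuinely different route from the paper's. The paper proceeds by a direct case analysis on the inductive step: it first shows that simple cycles in $G_{k+1}=(x_{k+1},y_{k+1})G_k$ are idempaths by lifting them to paths in $G_k$ and splitting according to the orientation of the lift relative to the idempath from $x_{k+1}$ to $y_{k+1}$; it then separately establishes the co-terminal property by first reducing to vertex-disjoint simple paths, lifting, and again splitting into cases (the two diagrams in the paper), finally extending to non-simple and non-disjoint paths by further induction on the number of intersections. Your approach instead packages both properties into a single invariant --- the existence of a well-defined ``$C$-value'' $f_G(v)$ for each vertex --- and pushes that invariant through an identification in one stroke via the edge-by-edge lift. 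This is cleaner and avoids all the case distinctions; it also makes transparent exactly where two-sidedness enters (only at the very end, as left-cancellation, whereas the paper invokes it inside the case analysis). Amusingly, the paper defines precisely your $f_G$ (calling it the $C$-value $[v]_C$) immediately \emph{after} the lemma, as a corollary; you have effectively reversed the logical order and used its well-definedness as the engine of the proof. The only place where a reader might want one more sentence is your use of $f_G(\alpha(\pi_{i+1}))\cdot[\lambda(\pi_{i+1})]_C = f_G(\omega(\pi_{i+1}))$ for a path not starting at the root: this is of course an immediate consequence of the invariant (prepend a root-path and cancel), but stating it explicitly would remove any doubt.
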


\begin{proof}
    Since retracts are birooted subgraphs and the claimed properties are clearly inherited by subgraphs, it clearly suffices to prove the claim for the case that $G$ is a descendant of $T$.

    Write $G = (x_m,y_m)\dots(x_1,y_1)T$. We perform induction on $m$ -- if $m = 0$, the statement is true vacuously. Suppose then that the statement holds true in some $G_{k} := (x_{k},y_{k})\dots(x_1,y_1)T$. We show the properties hold in $G_{k+1} := (x_{k+1},y_{k+1})G_{k}$.
    
    We first show that any cycle in $G_{k+1}$ is an idempath, that is it has label equalling $1$ in $C$. It is sufficient to show the result for simple cycles, since any non-simple cycle may be obtained by successively composing and inserting simple cycles into simple cycles. Thus showing each insert has label equalling $1$ will ensure the label of the full cycle also equals $1$. 
    
    Suppose then that $z_0 \xrightarrow{p_1}z_1 \xrightarrow{p_2}\dots \xrightarrow{p_{n-1}} z_{n-1}\xrightarrow{p_n}z_0$ is a simple cycle in $G_{k+1}$ for vertices $z_i$ in $G_{k+1}$ and labels $p_i \in X$. If this cycle lifts to some cycle in $G_k$, then $[p_1p_2\cdots p_n]_C = 1$ by assumption. Otherwise, this cycle must have been created by the identification of
    $x_{k+1}$ and $y_{k+1}$. It follows that $z_i = x_{k+1} = y_{k+1}$ in $G_{k+1}$ for some $z_i$. We may relabel our cycle to ensure $z_0 = x_{k+1} = y_{k+1}$ without loss of generality. It follows that the edges of our simple cycle lifts to $G_k$ to either a path \begin{equation}\label{eq:path1}
        x_{k+1} \xrightarrow{p_{1}}z_1'\xrightarrow{p_2}\dots \xrightarrow{p_{n-1}} z_{n-1}'\xrightarrow{p_n}y_{k+1}
    \end{equation}or a path\begin{equation}\label{eq:path2}
       y_{k+1} \xrightarrow{p_{1}}z_1'\xrightarrow{p_2}\dots \xrightarrow{p_{n-1}} z_{n-1}'\xrightarrow{p_n}x_{k+1}
    \end{equation}where the vertex $z_j$ lifts to some vertex $z_j'$ in $G_k$.
    
    If the cycle lifts to the path \eqref{eq:path1}, then since the identification $(x_{k+1},y_{k+1})$ is defined on $G_k$, there exists some idempath from $x_{k+1}$ to $y_{k+1}$ in $G_k$. By our inductive assumption, all paths from $x_{k+1}$ to $y_{k+1}$ are labelled by words defining equal elements of $C$. Hence the path \eqref{eq:path1} must be an idempath and have $[p_1p_2\cdots p_n]_C = 1$. Otherwise, if the cycle lifts to the path \eqref{eq:path2}, then as there exists an idempath from $x_{k+1}$ to $y_{k+1}$, say labelled $z$, there exists a cycle in $G_k$ labelled $zp_1\cdots p_n$. By inductive assumption, $1 = [zp_1\cdots p_n]_C = [1p_1\cdots p_n]_C = [p_1\cdots p_n]_C$ as required.

    We now show the result for co-terminal paths. Suppose $u,v$ are distinct vertices in $G_{k+1}$ with two paths labelled $u \xrightarrow{w} v$ and $u \xrightarrow{w'} v$ in $G_{k+1}$. Our goal is to show that $[w]_C = [w']_C$. We first show our result when the paths are vertex disjoint and simple. As $u$ and $v$ are distinct vertices in $G_{k+1}$, certainly their respective lifts are still disjoint in $G_k$. If the paths both lift to connected co-terminal paths, then the result follows immediately. Otherwise, the identification of $x_{k+1}$ and $y_{k+1}$ must have created these paths in $G_{k+1}$. Since the paths are vertex disjoint in $G_{k+1}$, and the identification identified exactly two vertices, at least one of the paths must have lifted to a connected path from some pre-image of $u$ to some pre-image of $v$ -- without loss of generality suppose the path labelled $w$ did so. Call these pre-images $u'$ and $v'$ respectively. It follows that the paths lifted in such a way that in $G_{k}$, we observe one of the situations shown in Figure \ref{fig:gk}, where $w' \equiv w_1w_2$ for some words $w_1,w_2 \in X^*$ and an idempath labelled $z \in X^*$ from $x_{k+1}$ to $y_{k+1}$.

    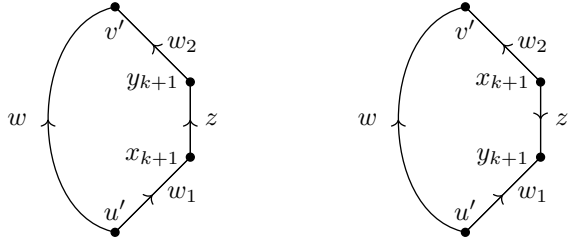
\begin{figure}[ht]
        \centering
        {
        \begin{tikzpicture}
            \GraphInit[vstyle=Empty]
            \SetVertexSimple[MinSize = 1pt]
            \SetUpEdge[lw = 0.5pt]
            \tikzset{EdgeStyle/.style={->-}}
            \tikzset{VertexStyle/.append style = {minimum size = 3pt, inner sep = 0pt}}
            \SetVertexNoLabel
            \SetGraphUnit{2}
            
            \Vertex[x=0,y=0]{u}\node (l) at ( 0,0.3) {$u'$};
            \Vertex[x=0,y=3]{v}\node (l) at ( 0,2.7) {$v'$};
            
            \Vertex[x=1,y=1]{elow}\node (l) at ( 0.5,1) {$x_{k+1}$};
            \Vertex[x=1,y=2]{ehigh}\node (l) at ( 0.5,2) {$y_{k+1}$};
                      
            \Edge[style={bend left = 75}](u)(v)\node (l) at ( -1.3,1.5) {$w$};
            \Edge(u)(elow)\draw (u) -- (elow) node [midway, right=2pt] {$w_1$};
            \Edge(elow)(ehigh)\draw (elow) -- (ehigh) node [midway, right=2pt] {$z$};
            \Edge(ehigh)(v)\draw (ehigh) -- (v) node [midway, right=2pt] {$w_2$};
            
        \end{tikzpicture}
        }\hspace{0.1\textwidth}
        {
        \begin{tikzpicture}
            \GraphInit[vstyle=Empty]
            \SetVertexSimple[MinSize = 1pt]
            \SetUpEdge[lw = 0.5pt]
            \tikzset{EdgeStyle/.style={->-}}
            \tikzset{VertexStyle/.append style = {minimum size = 3pt, inner sep = 0pt}}
            \SetVertexNoLabel
            \SetGraphUnit{2}
            
            \Vertex[x=0,y=0]{u}\node (l) at ( 0,0.3) {$u'$};
            \Vertex[x=0,y=3]{v}\node (l) at ( 0,2.7) {$v'$};
            
            \Vertex[x=1,y=1]{elow}\node (l) at ( 0.5,1) {$y_{k+1}$};
            \Vertex[x=1,y=2]{ehigh}\node (l) at ( 0.5,2) {$x_{k+1}$};
                      
            \Edge[style={bend left = 75}](u)(v)\node (l) at ( -1.3,1.5) {$w$};
            \Edge(u)(elow)\draw (u) -- (elow) node [midway, right=2pt] {$w_1$};
            \Edge(ehigh)(elow)\draw (elow) -- (ehigh) node [midway, right=2pt] {$z$};
            \Edge(ehigh)(v)\draw (ehigh) -- (v) node [midway, right=2pt] {$w_2$};
            
        \end{tikzpicture}}
        \caption{The lifts of the coterminal paths labelled $w$ and $w' \equiv w_1w_2$.}
        \label{fig:gk}
    \end{figure}
    If the paths lift in such a way that the left graph of Figure \ref{fig:gk} arises, then by inductive assumption, we have $[w]_C = [w_1zw_2]_C = [w_1\cdot1\cdot w_2]_C = [w_1w_2]_C \equiv [w']_C$ and we are done.

    Suppose instead the paths lift in such a way that the right graph of Figure \ref{fig:gk} arises. Certainly, there exists some path from the start vertex of $G_k$ to $x_{k+1}$, say labelled $p$, and some path from the start vertex of $G_k$ to $u'$, say labelled $q$. From our inductive assumption, it follows that $[p]_C = [pz]_C = [qw_1]_C$ and $[pw_2]_C = [qw]_C$. Thus $[qw_1w_2]_C = [qw]_C$ and so by (left) cancellativity, $[w]_C = [w_1w_2]_C \equiv [w']_C$ as required.

    Now note that if our paths are not simple, then they may be obtained by inserting cycles into a simple path, which by the result for cycles above will maintain the $C$-value of their labels. Moreover, if our paths are not vertex disjoint, then we use an inductive argument on the number of times the path labelled $w'$ intersects the path labelled $w$ (the base case of $0$ intersections given above). Suppose $u \xrightarrow{w'} v$ intersects $u \xrightarrow{w} v$ $K$ times. Consider the shortest, non-trivial prefix $a$ of $w'$ which labels a subpath of $u \xrightarrow{w'}$ to a vertex $r$ also on $u \xrightarrow{u} v$. Write $w' \equiv ab$ and $w \equiv cd$ where our paths split as $u \xrightarrow{a} r \xrightarrow{b} v$ and $u \xrightarrow{c} r \xrightarrow{d} v$. By choice of $a$, the paths $u \xrightarrow{a} r$ and $u \xrightarrow{c} r$ are vertex disjoint, and the path $r \xrightarrow{b} v$ intersects $r \xrightarrow{d} v$ at most $K-1$ times. Thus by inductive assumption, we have $[a]_C = [c]_C$ and $[b]_C = [d]_C$. Hence $[w']_C = [ab]_C = [cd]_C = [w]_C$ as required.

    Returning to our main induction, the result now follows.
\end{proof}

Suppose $G$ is an $X$-graph which is a retract of a descendant of some $X$-tree $T$. Given any vertex $v$ of $G$, certainly there exists a directed path from the
root to $v$ and by Lemma~\ref{lem:pathsintilde} every such path has a label which evaluates to the same element of $C$. Thus we may define the \textit{$C$-value} of the vertex $v$, denoted $[v]_C$, to be the value in $C$ of the label of any path from the root to $v$.

\begin{cor}\label{cor:tildekeepsCvalue}
    Let $C$ be two-sided cancellative and let $G$ be an $X$-graph which is a retract of a descendant of some $X$-tree. 
    If two vertices of $G$ are identified in any retract of any descendant of $G$, then they have the same $C$-value. Moreover, if there exists a directed path from $u$ to $v$ in any retract of any descendant of $G$, then it is an idempath.
\end{cor}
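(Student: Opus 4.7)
The plan is to reduce both claims to Lemma~\ref{lem:pathsintilde} by first establishing that any retract of any descendant of $G$ is itself a retract of some descendant of $T$. To see this, let $G$ be a retract of $D_T$, a descendant of $T$, let $D'$ be a descendant of $G$, and let $R$ be a retract of $D'$. Writing $D' = (u_n, v_n) \cdots (u_1, v_1) G$, I would apply the same sequence of identifications to $D_T$, viewing each pair $(u_i, v_i)$ as vertices of the parallel descendant of $D_T$ via the inductive inclusions $(u_{i-1}, v_{i-1}) \cdots (u_1, v_1) G \hookrightarrow (u_{i-1}, v_{i-1}) \cdots (u_1, v_1) D_T$. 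Each identification remains valid because the idempath witnessing the $i$-th step in the current descendant of $G$ is also a path in the parallel descendant of $D_T$. This produces a descendant $D'_T$ of $T$ containing $D'$ as a birooted subgraph.

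The main technical step, and the one I expect to be the chief obstacle, is to verify that the original retraction $D_T \to G$ commutes with these parallel identifications and hence induces a morphism $D'_T \to D'$ that restricts to the identity on $D' \subseteq D'_T$. This amounts to a careful induction using the universal property of identifying pairs of vertices. Granting it, $D'$ is a retract of $D'_T$, and composing with the retraction $D' \to R$ exhibits $R$ as a retract of $D'_T$, so Lemma~\ref{lem:pathsintilde} applies to $R$: cycles in $R$ are idempaths, and co-terminal paths in $R$ share their $C$-value.

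With this reduction in hand, the two assertions follow easily. For the first, if $u, v \in G$ are identified in $R$, then any path in $G$ from the root to $u$ and any path from the root to $v$ map under $G \to R$ to co-terminal paths in $R$, which share their $C$-label by Lemma~\ref{lem:pathsintilde}; hence $[u]_C = [v]_C$. For the second, any directed path with label $w$ from $u$ to $v$ in any retract-of-descendant $R'$ of $G$ (also a retract of a descendant of $T$, by the same reduction) satisfies $[v]_C = [u]_C [w]_C$ in $C$ by well-definedness of $C$-values in $R'$; combining this with $[u]_C = [v]_C$ from the first claim and invoking left cancellativity of $C$ yields $[w]_C = 1$, so the path is an idempath.
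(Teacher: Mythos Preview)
Your approach is correct and, once the reduction is complete, coincides with the paper's argument: choose root-paths to $u$ and $v$ in $G$, push them into $H$ where they become co-terminal, and invoke Lemma~\ref{lem:pathsintilde}; then use left cancellativity for the second claim.

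The difference is that you are explicit about something the paper glosses over. The paper simply cites Lemma~\ref{lem:pathsintilde} for $H$, but Lemma~\ref{lem:pathsintilde} is stated for retracts of descendants of an $X$-tree, and $H$ is a retract of a descendant of $G$, which is itself only a retract of a descendant of $T$. Your reduction closes this gap. Two remarks on it. First, the step you flag as the chief obstacle is easier than you fear: since the retraction $\rho\colon D_T \to G$ fixes $G$ pointwise, it in particular fixes each pair $(u_i,v_i)\in V(G)$, so $\rho$ descends at each identification step to a map on the parallel descendant of $D_T$ that still restricts to the identity on the embedded descendant of $G$; the induction is routine. Second, an alternative shortcut avoids this reduction entirely: the inductive step in the proof of Lemma~\ref{lem:pathsintilde} never uses that the starting graph is a tree, only that the two properties already hold there. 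Thus the conclusion of Lemma~\ref{lem:pathsintilde} is stable under further idempath identifications and passage to birooted subgraphs, and since $G$ already enjoys these properties, so does any retract of any descendant of $G$. Either route justifies the paper's bare citation.
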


\begin{proof}
    Let $u$ and $v$ be vertices identified in some retract $H$ of a descendant of $G$. Let $p, q \in X^*$ be the labels of some paths in $G$ from the root to $u$ and $v$ respectively, so that $[u]_C = [p]_C$ and $[v]_C = [q]_C$. Then $p$ and $q$ label coterminal paths in $H$, so by Lemma~\ref{lem:pathsintilde} must have the same value in $C$.
    
Moreover, if $K$ is a retract of a descendant of $G$ in which there exists a directed path from $u$ to $v$, say with label $z$, then $[u]_C [z]_C = [v]_C$ or $[v]_C[z]_C = [u]_C$. Since $[u]_C = [v]_C$, we have $[z]_C = 1$ by (left) cancellativity.
\end{proof}

\begin{lem}\label{lem:standard}
    Let $T$ be an $X$-tree and let $C$ be two-sided cancellative. Suppose $u$ and $v$ are distinct vertices of $T$ with a constructable semiwalk labelled $\ell$ on $T$ from $u$ to $v$. Let $t$ be the unique vertex of $T$, closest to the root, traversed by the semiwalk. Then there exists a constructable semiwalk between $u$ and $v$ in which the last idempotent inserted to construct its label corresponds to an idempath beginning at $t$.
\end{lem}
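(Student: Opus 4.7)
My plan is to construct the desired semiwalk $s^\ast$ directly from the given one by adjoining a suitable round-trip detour at $t$. The argument sidesteps a full induction on the number of insertions and instead reduces the problem to establishing the existence of an idempath starting at $t$ in $T$.

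A first observation is that $t$ is necessarily an ancestor of both $u$ and $v$ in $T$. Any semiwalk between two vertices of a tree must traverse every edge of their unique undirected connecting path, so if $t$ were not an ancestor of $u$ then the semiwalk would be forced to visit the lowest common ancestor of $u$ and $t$, which lies strictly above $t$, contradicting that $t$ is the topmost vertex traversed.

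The core construction is as follows. Suppose there exists an idempath $\pi_t \colon t \to t'$ in $T$ with label $z \in X^\ast$, so that $[z]_C = 1$. Split the given label as $\ell = \ell_1 \ell_2$ at a position where $s$ visits $t$ (such a position exists since $t$ is traversed), and form the new label $\ell^\ast = \ell_1 z z^{-1} \ell_2$. Reading from $u$, this is the label of a valid semiwalk $s^\ast$ from $u$ to $v$ that follows $s$ up to $t$, traverses $\pi_t$ out and back, then resumes $s$; in particular the topmost vertex of $s^\ast$ is still $t$, since the detour stays at or below $t$. Starting from the given $n$-construction of $\ell$, extend to an $(n+2)$-construction of $\ell^\ast$ by first inserting $z^{-1}$ between $\ell_1$ and $\ell_2$, and then inserting $z$ immediately before the newly inserted $z^{-1}$. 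The final insertion is $z$, and its corresponding idempath is precisely $\pi_t$, beginning at $t$ as required.

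The main obstacle therefore lies in producing the idempath $\pi_t$ starting at $t$. In the boundary cases where $u = t$ or $v = t$, such an idempath is easy to exhibit: Corollary~\ref{cor:tildekeepsCvalue} gives $[u]_C = [v]_C$, so in (say) the case $u = t$, letting $q_v$ denote the label of the directed path in $T$ from $t$ to $v$, the equation $[t]_C [q_v]_C = [v]_C = [u]_C = [t]_C$ combined with left cancellativity of $C$ forces $[q_v]_C = 1$, and $q_v$ labels the required idempath. The generic case $u, v \neq t$ is more delicate; there one expects to extract $\pi_t$ by a careful analysis of the excursions of $s$ from $t$ into the cones of $t$'s children, exploiting the constructability of $\ell$ together with the two-sided cancellativity of $C$ to produce a proper descendant $t'$ of $t$ with $[t']_C = [t]_C$.
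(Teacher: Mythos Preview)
Your reduction is clean, and the $zz^{-1}$ insertion trick is valid: once you have an idempath $\pi_t$ starting at $t$, the label $\ell^\ast = \ell_1 z z^{-1} \ell_2$ is indeed $(n+2)$-constructable with last insertion $z$, and that insertion traces $\pi_t$ from $t$ in the semiwalk. So far so good.

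The gap is that you have not actually produced $\pi_t$ in the generic case $u,v \neq t$. You write that ``one expects to extract $\pi_t$ by a careful analysis of the excursions of $s$ from $t$,'' but this is precisely the substance of the lemma, not a detail to be deferred. Nothing you have established so far forces the existence of a directed idempath out of $t$ in $T$: Corollary~\ref{cor:tildekeepsCvalue} tells you $[u]_C=[v]_C$, but with $t$ a strict ancestor of both, this does not by itself give a descendant $t'$ of $t$ with $[t']_C=[t]_C$. The paper's proof supplies exactly this missing step: at the first visit to $t$ one has a sign change $x\in X^{-1}$, $y\in X$ between consecutive letters, so $x$ and $y$ belong to different inserted words $e_x,e_y$; taking whichever of $e_x,e_y$ was inserted later, the minimal contiguous block of $\ell$ containing all its letters is a shorter constructable subword tracing a semiwalk from $t$ to some vertex $t'$ in $\mathrm{Cone}_T(t)$, and then Corollary~\ref{cor:tildekeepsCvalue} makes the tree-path $t\to t'$ the required idempath. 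Without that analysis (or an equivalent one), your argument does not close. In effect, you have repackaged the problem rather than solved it: the ``easy'' part you wrote out is genuinely easy, and the part you left as an expectation is where all the work lies.
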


\begin{proof}
    If $\ell$ is $1$-constructable, then $t \in \{u,v\}$ and either $\ell$ or $\ell^{-1}$ labels a positively-labelled idempath beginning at $t$ as required. Now suppose $\ell$ is not $1$-constructable. Note that there exist paths in $T$ from $t$ to $u$ and $t$ to $v$. If $t \in \{u,v\}$, then $u$ and $v$ are connected in $T$. Thus by Corollary \ref{cor:tildekeepsCvalue} this path is an idempath -- the positive orientation of this path gives us our desired semiwalk. Now suppose further that $t \notin \{u,v\}$.
                
    Write $\ell \equiv pxyq$ where $p,q \in \left\{X\cup X^{-1}\right\}^*$ and $x,y \in X \cup X^{-1}$ such that $px$ is the first prefix of $\ell$ tracing a semiwalk ending at $t$. Since $t$ is the `lowest' vertex visited, it follows that $x \in X^{-1}$ and $y \in X$. Thus, $x$ and $y$ were letters from different words inserted in any $n$-construction of $\ell$. Now fix any $n$-construction. Let $e_x$ be the inserted word containing $x$, and $e_y$ be the inserted word containing $y$. We split into cases depending on the order in which $e_x$ and $e_y$ were inserted.
    
    \textbf{\textit{Case 1: $e_y$ was inserted after $e_x$.}}\\
    Consider the smallest subword of $\ell$ containing every letter of the word inserted containing $y$; call this subword $\ell'$. Since $e_y$ was inserted after $e_x$, $\ell'$ consists of letters from $e_y$ and letters from words inserted after $e_y$. Moreover, if an inserted word has any letter in $\ell'$, then all of its letters are in $\ell'$. It follows that $\ell'$ is itself $K$-constructable for some $K < n$. Since $\ell'$ does not include the letter $x$, we may rewrite $\ell$ as $\ell = px\ell' r$ for some $r \in \left\{X\cup X^{-1}\right\}^*$. Since $e_y$ was inserted after $e_x$, we may find an insertion order to further see that the word $pxr$ is $(n-K)$-constructable (by inserting all of the words with letters in $\ell'$ last).
                
    By considering the semiwalk traced by $\ell'$, we see that it labels a semiwalk beginning at $t$ completely in $\mathrm{Cone}_T(t)$. Let $w$ be the label of the unique path from $t$ to $\omega(\ell')$. Since $\ell'$ is $K$-constructable, we must have that $t$ is identified with the $\omega(\ell')$ in $\widetilde{T}$; hence $w$ labels an idempath by Corollary \ref{cor:tildekeepsCvalue}. Thus the word $pxwr$ is $(n-K+1)$-constructable. Moreover, it labels a semiwalk on $T$ from $u$ to $v$ as required.
    
    \textbf{\textit{Case 2: $e_y$ was inserted before $e_x$.}}\\
    Consider the reverse semiwalk labelled $\ell^{-1} = q^{-1}y^{-1}x^{-1}p^{-1}$. By inverting our construction of $\ell$, we have an $n$-construction of $\ell^{-1}$ in which the word containing $x^{-1}$ was inserted after the word containing $y^{-1}$. We may therefore use a dual argument to that in Case 1 (considering instead the final time the vertex $t$ is traversed) to determine that there is a constructable semiwalk from $v$ to $u$ on $T$ in which the final inserted word traverses an idempath beginning at $t$. Taking the inverse label of this semiwalk is our desired semiwalk.
\end{proof}
                    
Lemma \ref{lem:standard} allows us to define the following. We call an $n$-constructable semiwalk from $u$ to $v$ on $T$ \textit{standard} if there is an $n$-construction of the semiwalk in which the final inserted word corresponds to an idempath in $T$ starting at the lowest vertex of $T$ traversed by the semiwalk. Theorem \ref{thm:whenidentify} and Lemma \ref{lem:standard} ensure that such a semiwalk exists for any pair of identified vertices $u$ and $v$ when $T$ is a tree and $C$ is two-sided cancellative.

Given an $X$-graph $\Gamma$, we define its \textit{condensation} $\textrm{Cond}(\Gamma)$ to be the $X$-graph obtained by contracting all vertices of $\Gamma$ which lie in the same strongly connected component into a single vertex. Formally, the vertex set $\textrm{Cond}(\Gamma)$ is $\{\ol v \colon v \in V(\Gamma)\}$ with start vertex $\ol {\alpha(\Gamma)}$ and end vertex $\ol{\omega(\Gamma)}$, and edge set $$\{ \ol u \xrightarrow{x} \ol v \colon  u \xrightarrow{x} v \hbox{ is an edge of }\Gamma,\ol u \neq \ol v\}.$$

An $X$-graph is called a \textit{directed acyclic graph} if it contains no directed cycle with at least $1$ edge, or equivalently, if its strongly connected components
are single vertices. In particular, all $X$-trees are directed acyclic graphs, and for any directed $X$-graph $\Gamma$, $\textrm{Cond}(\Gamma)$ is a directed
acyclic graph.

\begin{prop}\label{prop:pretzelcondistree}
For any $X$-tree $T$, $\mathrm{Cond}(\widetilde{T})$ is an $X$-tree.
\end{prop}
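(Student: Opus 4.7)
The plan is to identify $\mathrm{Cond}(\widetilde{T})$ with the graph obtained from $T$ by contracting, for each strongly connected component $S$ of $\widetilde{T}$, the preimage $A_S \subseteq V(T)$ of $S$ under the natural quotient $T \to \widetilde{T}$ to a single vertex. Under this identification the vertices of $\mathrm{Cond}(\widetilde{T})$ correspond exactly to the $A_S$, and its edges come from the edges of $T$ running between distinct $A_S$'s. Since contracting a connected subgraph of a tree always yields a tree, the proposition reduces to the following key claim: for every SCC $S$ of $\widetilde{T}$, the set $A_S$ induces a connected subgraph of $T$. Correct orientation (edges directed away from the root) then follows because each such connected subtree $A_S$ has a unique vertex of $T$ closest to $\alpha(T)$, which serves as the canonical entry point of the contracted node.

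I prove the key claim by induction along a sequence of idempath identifications $T = G_0 \to G_1 \to \cdots \to G_N = \widetilde{T}$, the hypothesis being that for every SCC of $G_k$ its preimage in $T$ is connected. The base case is trivial, since $T$ is a directed acyclic graph whose SCCs are singletons. For the inductive step, suppose $G_{k+1}$ arises from $G_k$ by identifying vertices $a$ and $b$ joined by an idempath $a = p_0 \to p_1 \to \cdots \to p_m = b$ in $G_k$; once identified, this path becomes a directed cycle. The new SCC $S'$ of $G_{k+1}$ containing this cycle consists precisely of those $v \in V(G_k)$ that both reach and are reached by $\{a,b\}$ in $G_k$ (together with the identified vertex). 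Every such $v$ lies on a directed walk in $G_k$ from $\{a,b\}$ to $\{a,b\}$, all of whose intermediate vertices are themselves in $S'$; together with the idempath itself, this shows $S'$ is connected in the underlying undirected graph of $G_k$.

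The preimage $\pi_{k+1}^{-1}(S')$ then decomposes as the union of the $A_S$ over all $G_k$-SCCs absorbed into $S'$, each of which is connected by the inductive hypothesis. Since every edge of $G_k$ is literally an edge of $T$ (idempath identifications alter only vertices), any edge of $G_k$ joining two absorbed SCCs lifts to an edge of $T$ bridging the corresponding $A_S$'s. Chaining these bridging edges with paths inside each $A_S$ yields the connectedness of the union, completing the induction; the SCCs of $G_{k+1}$ unaffected by the identification keep their preimages and their connectedness by hypothesis.

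The main subtlety to address is that $S'$ need not equal the union of only the SCCs $S_{j(0)}, \ldots, S_{j(m)}$ containing the idempath vertices $p_i$: preexisting directed paths in $G_k$ can combine with the new identification $a = b$ to draw additional ``squeezed'' $G_k$-SCCs into $S'$. The walk characterization of $S'$ above handles these uniformly, and the same lifting of edges from $G_k$ to $T$ chains all the absorbed $A_S$'s into one connected set simultaneously.
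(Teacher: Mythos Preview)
Your proof is correct and takes a genuinely different route from the paper's. Both arguments induct along a sequence of idempath identifications $T=\Gamma_0\to\Gamma_1\to\cdots\to\Gamma_n=\widetilde T$, but they carry different invariants through the induction. The paper maintains the statement ``$\mathrm{Cond}(\Gamma_k)$ is an $X$-tree'' directly: at each step it shows that the new strongly connected component created by identifying the endpoints of the idempath $v_1\to\cdots\to v_m$ consists exactly of the (images of) vertices $u$ with $\overline u\in\{\overline{v_{j_1}},\ldots,\overline{v_{j_t}}\}$, using the tree structure of $\mathrm{Cond}(\Gamma_k)$ itself to force any such $u$ onto that path; hence $\mathrm{Cond}(\Gamma_{k+1})$ is obtained from the tree $\mathrm{Cond}(\Gamma_k)$ by contracting a directed path, and so is again a tree. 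Your invariant is instead ``preimages in $T$ of SCCs of $\Gamma_k$ are connected'', and you deduce tree-ness only at the end, by identifying $\mathrm{Cond}(\widetilde T)$ with the contraction of $T$ along the connected sets $A_S$.

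What each buys: the paper's argument is more local (it needs only $\mathrm{Cond}(\Gamma_k)$ and the idempath, not the map back to $T$) and yields the stronger intermediate fact that every $\mathrm{Cond}(\Gamma_k)$ is a tree. Your argument never invokes the tree structure of any intermediate condensation and gives a more explicit description of $\mathrm{Cond}(\widetilde T)$ as a quotient of $T$; the connectedness of the fibres $A_S$ is itself a useful byproduct. One point you leave slightly implicit is worth stating: because $T$ is a tree and each $A_S$ is connected, there is at most one edge of $T$ between any two distinct classes $A_{S_1},A_{S_2}$ (two such edges, together with paths inside $A_{S_1}$ and $A_{S_2}$, would form a cycle in $T$), so the contraction really is a simple graph and coincides with the paper's set-of-edges definition of $\mathrm{Cond}$.
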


\begin{proof}
We prove the statement by induction on the number of idempath identifications used to construct $\widetilde{T}$. Suppose $\Gamma=\widetilde{T}$ and $T=\Gamma_0, \Gamma_1, \ldots, \Gamma_n=\Gamma$ is a sequence of $X$-graphs where $\Gamma_{k+1}$ is obtained from $\Gamma_k$ by an idempath identification, and suppose $\textrm{Cond}(\Gamma_k)$ is an $X$-tree. Let $v_1 \xrightarrow{x_1} v_2   \xrightarrow{x_2} \ldots  \xrightarrow{x_{m-1}} v_m$ be the idempath in $\Gamma_k$ that is identified to obtain $\Gamma_{k+1}$.
For each edge $v_i  \xrightarrow{x_{i}} v_{i+1}$, either $\ol v_i =\ol v_{i+1}$ or $\ol v_i  \xrightarrow{x_{i}} \ol v_{i+1}$ is an edge in $\textrm{Cond}(\Gamma_k)$, so
there is a (possibly empty) path $\ol v_{j_1} \rightarrow \ol v_{j_2} \rightarrow \ldots \rightarrow \ol v_{j_t}$ in the $X$-tree $\textrm{Cond}(\Gamma_k)$ such
that every $v_i$ is contained in $\ol v_{j_k}$ for some $k$. 

Denote the graph morphism $\Gamma_k \to \Gamma_{k+1}$ by $f$. Then in $\Gamma_{k+1}$, the vertices $f(v_1), f(v_2), \ldots, f(v_m)$ lie on a directed cycle and are therefore all in the same strongly connected component. We furthermore claim that if $u$ is a vertex of $\Gamma_k$ such that $\ol u \neq \ol v_i$ for any $i$, then $\ol {f(u)} \neq \ol {f(v_1)}=\ldots =\ol {f(v_m)}$. Indeed, to show the contrapositive, assume that there is a directed path $p$ from $f(u) \to f(v_i)$ and a directed path $q$ from $f(v_j) \to f(u)$. Note that without loss of generality, we may assume that $p$ and $q$ have no internal vertices in the set $\{f(v_1), \ldots, f(v_m)\}$, and therefore they must lift to paths $u \to v_i$ and $v_j \to u$ respectively. Then either $\ol u = \ol v_i$ or $\ol u= \ol v_j$, or we have a directed path $\ol v_j \rightarrow \ol u \rightarrow \ol v_{i}$ in $\textrm{Cond}(\Gamma_k)$, but since $\textrm{Cond}(\Gamma_k)$ is an $X$-tree, by assumption $\ol u$ must then be one of the vertices $\{\ol v_{j+1}, \ldots, \ol v_{i-1}\}$, which proves the statement.

It thus follows that the graph $\textrm{Cond}(\Gamma_{k+1})$ is obtained from the graph $\textrm{Cond}(\Gamma_{k})$ by contracting the vertices in the path  $\ol v_{j_1} \rightarrow \ol v_{j_2} \rightarrow \ldots \rightarrow \ol v_{j_t}$, and is therefore an $X$-tree. The statement follows by induction.
\end{proof}

\begin{prop}\label{prop:cayleygraphchunks}
Let $C$ be an $X$-generated two-sided cancellative monoid and let $\Gamma$ be an $X$-graph which is a retract of a descendant of an $X$-tree. For any vertex $v$, denote by $\Gamma_v$ the subgraph of $\Gamma$ spanned by all directed paths from $\alpha(\Gamma)$ to $v$. Then there is an injective morphism of edge-labeled directed graphs from $\Gamma_v$ into the Cayley graph of $C$.
\end{prop}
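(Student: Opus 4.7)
My plan is to construct the claimed morphism $\phi$ by sending each vertex $u$ of $\Gamma_v$ to its $C$-value $[u]_C \in C$, and each edge $u \xrightarrow{x} w$ to the edge $[u]_C \xrightarrow{x} [u]_C \cdot x$ of $\mathrm{Cay}(C)$. The vertex map is well-defined by Corollary~\ref{cor:tildekeepsCvalue}, and the edge map is well-defined because extending a directed path $\alpha(\Gamma) \to u$ by the edge $u \xrightarrow{x} w$ yields a directed path $\alpha(\Gamma) \to w$, whence $[w]_C = [u]_C \cdot x$ and so the image really is an edge of $\mathrm{Cay}(C)$ whose endpoints match the images of the endpoints in $\Gamma_v$. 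By construction this preserves sources, targets and labels.

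For injectivity on vertices, I would suppose for contradiction that $u$ and $w$ are distinct vertices of $\Gamma_v$ with $[u]_C = [w]_C$. By the definition of $\Gamma_v$, choose directed paths $\alpha(\Gamma) \xrightarrow{a_1} u \xrightarrow{b_1} v$ and $\alpha(\Gamma) \xrightarrow{a_2} w \xrightarrow{b_2} v$. Lemma~\ref{lem:pathsintilde} applied to the coterminal paths from $\alpha(\Gamma)$ to $v$ yields $[a_1 b_1]_C = [a_2 b_2]_C$, and combined with $[a_1]_C = [u]_C = [w]_C = [a_2]_C$, left cancellativity gives $[b_1]_C = [b_2]_C$. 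To pass from this numerical equality to the equality $u = w$, I would lift the configuration to the ambient $X$-tree $T$ of which $\Gamma$ is a retract of a descendant, obtaining preimages $\tilde u_1, \tilde u_2$ of $u, w$ and $\tilde v_1, \tilde v_2$ of $v$ in $T$; the latter pair must be identified in $\widetilde T$ since both map to $v$ after the idempath identifications. Theorem~\ref{thm:whenidentify} together with Lemma~\ref{lem:standard} (invoking two-sided cancellativity) then provides a standard constructable semiwalk between $\tilde v_1$ and $\tilde v_2$ in $T$.

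The key step of the proof is to promote this constructable semiwalk between $\tilde v_1$ and $\tilde v_2$ into a constructable semiwalk between $\tilde u_1$ and $\tilde u_2$, using the directed paths $\tilde u_i \xrightarrow{b_i} \tilde v_i$ together with the equality $[b_1]_C = [b_2]_C$ and two-sided cancellativity. Once such a semiwalk is produced, Theorem~\ref{thm:whenidentify} forces the identification of $\tilde u_1$ and $\tilde u_2$ in $\widetilde T$, contradicting the distinctness of $u$ and $w$ in the retracted graph $\Gamma$ and yielding vertex injectivity. Injectivity on edges then follows quickly: two edges of $\Gamma_v$ with the same image share a source (by vertex injectivity) and a label, and since each vertex of $\mathrm{Cay}(C)$ has at most one outgoing edge of each label, they share a target as well, so the retracted structure of $\Gamma$ forces them to coincide.

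The main obstacle I anticipate is precisely the semiwalk-promotion step. Producing explicit insertions of $C$-idempath labels that combine with the common prefixes/suffixes $b_1, b_2$ to yield a constructable semiwalk between the $\tilde u_i$ looks delicate, and I expect it to hinge essentially on two-sided cancellativity together with Lemma~\ref{lem:pathsintilde} to control the cycles traversed by the modified semiwalk along the way.
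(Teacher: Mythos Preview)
Your construction of $\phi$ and your edge-injectivity argument match the paper's. The divergence is in vertex injectivity, and there your proposal carries a genuine gap.

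The semiwalk-promotion step you flag as the obstacle really is one: nothing in the toolkit developed so far converts a constructable semiwalk between $\tilde v_1$ and $\tilde v_2$ into one between $\tilde u_1$ and $\tilde u_2$ from the bare equality $[b_1]_C=[b_2]_C$; constructability is a combinatorial condition on insertions of idempath labels, not a numerical one, and the equality of the $C$-values of $b_1,b_2$ gives no handle on such insertions. There is a further wrinkle even before you reach that step. Since $\Gamma$ is a retract of some descendant $G$ of $T$ (with $G$ not assumed to be $\widetilde T$), two preimages of $v$ in $T$ need not be identified by idempath identifications at all---they may remain distinct in $G$ and be merged only by the retraction---so Theorem~\ref{thm:whenidentify} does not directly supply the constructable semiwalk you want between $\tilde v_1$ and $\tilde v_2$. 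The same issue undermines the intended contradiction at the end.

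The paper bypasses all of this by staying inside $\Gamma$ and using Proposition~\ref{prop:pretzelcondistree}. Since $\mathrm{Cond}(\Gamma)$ is an $X$-tree, the subgraph $\mathrm{Cond}(\Gamma_v)$ is a directed path from $\overline{\alpha(\Gamma)}$ to $\overline v$. Hence any two vertices $u,u'$ of $\Gamma_v$ are joined by a directed path in at least one direction inside $\Gamma_v$ itself. If $[u]_C=[u']_C$, Lemma~\ref{lem:pathsintilde} forces that path to have label with $C$-value $1$, whence it is a cycle and $u=u'$. No lifting to $T$, no semiwalks, no promotion step.
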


\begin{proof}
Denote the Cayley graph of $C$ by $\textrm{Cay}(C;X)$. Recall that this is an $X$-labelled directed graph with vertex set $C$ and with edges of the form $c \xrightarrow{x} {cx}$ for each $c \in C$ and $x \in X$.

Define a map $\phi: V(\Gamma) \to V(\textrm{Cay}(C;X))$ by $\phi(u) = [u]_C$. To show that this defines a morphism of edge-labelled directed graphs,
we need to show that if there is an edge $u \xrightarrow{x} u'$ in $\Gamma$, then there exists an edge $\phi(u) \xrightarrow{x} \phi(u')$ in $\textrm{Cay}(C;X)$. Indeed, by Lemma \ref{lem:pathsintilde}, since in $\Gamma$ (and particularly in the subgraph $\Gamma_v$) co-terminal paths have labels with equal value in $C$, we have that $[u]_C x = [u']_C$. Thus an edge $\phi(u) \xrightarrow{x} \phi(u')$ certainly exists in $\textrm{Cay}(C;X)$. This $\phi$ extends to a graph morphism.
We claim that this morphism is injective.

Suppose $u,u' \in V(\Gamma_v)$ have $[u]_C = [u']_C$. Note $\textrm{Cond}(\Gamma_v)$ is a rooted subgraph of $\textrm{Cond}(\Gamma)$ -- one sees it is a directed tree (in fact a directed path) with root vertex $\ol{\alpha(\Gamma)}$ and exactly one leaf vertex $\ol{v}$. It follows that there must exist a directed path between $u$ and $u'$ in some direction in $\Gamma_v$, and indeed in $\Gamma$. By Lemma \ref{lem:pathsintilde}, it must have label equalling $1$ in $C$ and therefore be a cycle. Thus $u$ and $u'$ are the same vertex of $\Gamma_v$, that is $\phi$ is injective on vertices.

Now suppose edges $e$ and $f'$ in $\Gamma_v$ have $\phi(e) = \phi(f)$. Then in particular we must have $\lambda(e)=\lambda(f)$, and by injectivity on the vertices, also $\alpha(e)=\alpha(f)$ and $\omega(e)=\omega(f)$. Thus the edges $e$ and $f$ are co-terminal edges in $\Gamma_v$ with the same label. In particular, there exists a retraction on $\Gamma$ which identifies these edges, so as $\Gamma$ is retracted, $e=f$ indeed. 
\end{proof}

\section{Pretzel Monoids}\label{sec:pretzel}

Our aim in this section is to introduce a kind of left adequate expansion of a given $X$-generated right cancellative monoid $C$. We do this first by a geometric
construction, and later show that the same monoid can be obtained by a natural presentation in the category of left adequate monoids.

We shall need some basic facts about right and two-sided cancellative monoids, which will allow us sometimes to restrict to the two-sided cancellative case. An $X$-generated \textit{special} right cancellative monoid is one given by a presentation of the form $\RC \langle X \mid w_i=1,\ i \in I \rangle,$ that is, where all relations have the identity on one side of the equality. When we say $X$-generated special right cancellative monoid, we mean a right cancellative monoid which is given by a special presentation with respect to the generating set $X$. Given a right cancellative monoid $C$, if we define
$$C'= \RC \langle X \mid w=1 \hbox{ for all } w\in X^* \hbox{ such that } [w]_C=1 \rangle,$$
then it is easy to see that exactly the same words represent over $X$ represent $1$ in $C$ and in $C'$.
We have the following straightforward characterisation of special right cancellative monoids:

	\begin{thm}\label{thm:specialrightiscanc}
		Special right cancellative monoids are exactly the free products of free monoids and groups (in the variety of monoids). In particular, they are
		two-sided cancellative.
	\end{thm}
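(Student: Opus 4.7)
The plan is to prove both directions of the characterisation in turn, deducing two-sided cancellativity from the resulting structural description.

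For the forward direction, I would observe that a free monoid needs no relations and so is trivially special, and that a group $G$ with group presentation $\langle Y \mid R\rangle$ admits the special monoid presentation $\langle Y \sqcup Y^{-1} \mid yy^{-1} = 1,\ y^{-1}y = 1,\ r = 1 \text{ for } r \in R\rangle$. A free product of special monoids in the variety of monoids is itself special, via the disjoint union of presentations. Free monoids and groups are two-sided cancellative, and the free product of two-sided cancellative monoids is two-sided cancellative by the standard normal form argument: every element has a unique expression as an alternating product of non-identity elements from distinct factors, and cancellativity lifts from each factor.

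For the converse direction, let $M = \RC\langle X \mid w_i = 1,\ i \in I\rangle$ be a special right cancellative monoid. I would first establish a right-cancellativity lemma: if $ab = 1$ in $M$, then $bab = b \cdot 1 = 1 \cdot b$, and right-cancelling $b$ yields $ba = 1$, so every right inverse in $M$ is automatically two-sided. Applied to each factorisation $w_i \equiv uv$ in $X^*$, this shows that both $u$ and $v$ represent units of $M$; in particular, every generator appearing in some $w_i$ represents an element of the group of units $U(M)$. Let $Y \subseteq X$ collect such generators and set $Z = X \setminus Y$; every $w_i$ is then a word in $Y$. Form the group $G = \mathrm{Gp}\langle Y \mid w_i = 1\rangle$ and the free product $G * Z^*$ with the free monoid on $Z$ in the variety of monoids. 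The presentation of $M$ yields a morphism $M \to G * Z^*$ (since $G * Z^*$ is right cancellative by the forward direction and satisfies each $w_i = 1$), while the assignments $y \mapsto [y]_M$, $y^{-1} \mapsto [y]_M^{-1}$, $z \mapsto [z]_M$ extend to a morphism $G * Z^* \to M$, well-defined precisely because each $[y]_M$ is two-sided invertible. These two morphisms fix generators and hence are mutually inverse, so $M \cong G * Z^*$, a free product of a group and a free monoid.

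The main obstacle lies in the converse: the key step is verifying that each generator $y \in Y$ is two-sided invertible in $M$, which is handled by the right-cancellativity lemma above and is essential both for constructing the inverse morphism $G * Z^* \to M$ and for identifying $U(M)$ with $G$. Once the isomorphism $M \cong G * Z^*$ is established, two-sided cancellativity of $M$ follows immediately from the forward direction applied to $G * Z^*$.
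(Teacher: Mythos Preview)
Your proposal is correct and follows essentially the same route as the paper: both arguments hinge on showing that every generator appearing in some relator represents a unit, then splitting off the remaining generators as a free factor. Your ``right-cancellativity lemma'' ($ab=1 \Rightarrow ba=1$, by cancelling $b$ from $bab=b$) is equivalent to the paper's direct computation that from $[uzv]_C=1$ one gets $[zvu]_C=[vuz]_C=1$; the paper applies this to a single interior letter while you apply it to arbitrary prefix/suffix factorisations and then peel off letters, but the content is the same. Where the paper simply asserts the free-product splitting $\langle Z\rangle \ast Y^*$, you are more explicit in constructing mutually inverse morphisms between $M$ and $G \ast Z^*$ and in identifying the group factor concretely as $\mathrm{Gp}\langle Y \mid w_i=1\rangle$; this extra care is not strictly required but does make the argument more self-contained.
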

	
	\begin{proof}
		Given a free product $G \ast Y^*$, where $G$ is generated as a monoid by $X$, it has a special presentation
		$\RC \langle X \cup Y \mid w=1 \hbox{ whenever } w \in X^\ast \hbox{ and } [w]_G=1 \rangle.$
		Conversely, given a special right cancellative monoid
		$C=\RC \langle X \mid w_i=1, i \in I \rangle,$
		partition $X$ as $X=Y \cup Z$, where the letters in $Y$ do not appear in any relator, and the letters in $Z$ appear in at least one. We claim that every letter $z \in Z$ represents an invertible element in $C$. Assume $uzv$ is a relator, that is, $[uzv]_C=1$. Then $[uzvu]_C=[u]_C$ and so $[zvu]_C=1$ by right cancellativity, and similarly, $[vuzv]_C=[v]_C$ and hence $[vuz]_C=1$ by right cancellativity. It follows that $[z]_C$ is in the group of units of $C$ with inverse $[vu]_C$. In particular, $Z$ generates a subgroup. Since letters in $Y$ do not appear in the relators, $C$ splits as a free product of $\langle Z \rangle \ast Y^*$ with $\langle Z \rangle$ a group, as we needed.
	\end{proof}
    
    Combining with our observation above, and noting that free products of groups and free monoids are two-sided cancellative, we have:
    \begin{cor}\label{cor:twosided}
        For every $X$-generated right cancellative monoid $C$, there is an $X$-generated two-sided cancellative monoid $C'$ in which exactly the same words
        represent the identity element.
    \end{cor}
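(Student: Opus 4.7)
The plan is essentially to combine the observation already made in the paragraph preceding Theorem \ref{thm:specialrightiscanc} with Theorem \ref{thm:specialrightiscanc} itself. Specifically, I would take $C'$ to be the explicit special right cancellative monoid
\[
C' = \RC \langle X \mid w = 1 \hbox{ for all } w \in X^* \hbox{ such that } [w]_C = 1 \rangle
\]
introduced in the paper just above the theorem. The claim of the corollary then reduces to two checks: (i) that $C'$ has exactly the same words representing the identity as $C$, and (ii) that $C'$ is two-sided cancellative.

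For (i), one direction is immediate from the definition: every word $w$ with $[w]_C = 1$ is by construction a defining relator of $C'$, so $[w]_{C'} = 1$. For the converse, since $C$ satisfies (trivially) every one of the defining relations of $C'$, the universal property of the presentation produces a surjective monoid homomorphism $C' \to C$ which is the identity on $X$. Hence any word $w$ with $[w]_{C'} = 1$ also satisfies $[w]_C = 1$. This is exactly the content of the observation already made in the paper, and I would simply spell it out in one sentence.

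For (ii), Theorem \ref{thm:specialrightiscanc} applies directly to $C'$ (which is special right cancellative by construction) to identify it with a free product of a free monoid and a group. The final ingredient is the standard fact that free products of groups and free monoids are two-sided cancellative, which the paper explicitly cites in the lead-in to the corollary. I would not prove this standard fact in detail. The main (and only slightly non-routine) step is noting that one must pass from the given $C$ to the auxiliary $C'$ rather than proving $C$ itself is two-sided cancellative; no obstacles arise, since the construction of $C'$ from $C$ is canonical and the two monoids share the relevant invariant (the set of identity-representing words).
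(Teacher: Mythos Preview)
Your proposal is correct and is exactly the approach the paper takes: the corollary is stated immediately after the observation that $C$ and $C'$ share identity-representing words and after Theorem~\ref{thm:specialrightiscanc}, with the paper simply noting it follows by ``combining with our observation above, and noting that free products of groups and free monoids are two-sided cancellative.'' You have just spelled out those two ingredients explicitly.
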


Now fix an $X$-generated right cancellative monoid $C$ (which at certain points we may assume further to be two-sided cancellative). In the remainder of this section, all idempath identifications are done with respect to $C$ (or equivalently with respect to $C'$, see Corollary \ref{cor:twosided}).

We define a binary operation, called \textit{unpruned multiplication} (or `\textit{gluing}') on the set of all $X$-graphs as follows. For $X$-graphs $G$ and $H$, the \textit{unpruned product} $G \times H$ is given by the graph $G \cup H$ with the end vertex of $G$ identified with the start vertex of $H$, with start vertex given by the start vertex of the natural embedded copy of $G$, and end vertex given by the end vertex of the natural embedded copy of $H$. Note that $G \times H$ is indeed itself an $X$-graph.

We additionally define a unary operation, denoted $(+)$ as follows. For an $X$-graph $G$, we let $G^{(+)}$ denote the graph $G$ but with start and end vertex located at the start vertex of $G$. As before, note again that $G^{(+)}$ is itself an $X$-graph.

It is clear that $\times$ is an associative operation on the set of $X$-graphs with a two-sided identity given by the single-vertex, no-edge graph. Moreover, $(+)$ is an idempotent operation on the set of $X$-graphs and the subsemigroup generated by the image of $(+)$ is commutative under $\times$. These operations generalise those given for $X$-trees in \cite{kambites:free,kambites:freeleft}.

We now recall the concept of retraction onto the core of an $X$-graph $G$ \cite{hell:core}, and the notation given by $\overline{G}$. Together with our process of idempath identification, we now look at how our operations interact with the unpruned multiplication.

\begin{lem}\label{lem:timeswithtildeandprune}
Let $G,H$ be $X$-graphs. Then $\overline{\overline{G}\times\overline{H}} = \overline{G \times H}$ and $\widetilde{\widetilde{G} \times \widetilde{H}} = \widetilde{G \times H}$.
\end{lem}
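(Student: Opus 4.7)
I would handle the two equalities separately, each by exhibiting the left-hand side as an object possessing the universal characterisation of the right.

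For the first equality $\overline{\overline{G}\times\overline{H}} = \overline{G \times H}$, the key observation is that the canonical core retractions $r_G \colon G \to \overline{G}$ and $r_H \colon H \to \overline{H}$ both fix their start and end vertices, by definition of a morphism of $X$-graphs. In particular $r_G$ fixes $\omega(G)$ and $r_H$ fixes $\alpha(H)$, and these two vertices are identified to form the seam of $G \times H$; so $r_G$ and $r_H$ agree on that vertex and assemble into a well-defined morphism $G \times H \to \overline{G} \times \overline{H}$ of $X$-graphs. Moreover $\overline{G} \times \overline{H}$ is naturally a birooted subgraph of $G \times H$. Lemma~\ref{lem:retractfrommorph} then gives $\overline{G \times H} \cong \overline{\overline{G} \times \overline{H}}$ immediately.

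For the second equality $\widetilde{\widetilde{G} \times \widetilde{H}} = \widetilde{G \times H}$, I would appeal to uniqueness of the idempath-identified descendant. First I would note that any idempath lying entirely in the embedded copy of $G$ inside $G \times H$ remains an idempath of $G \times H$, and likewise for $H$; hence any finite sequence of idempath identifications witnessing $G \leadsto \widetilde{G}$ (respectively $H \leadsto \widetilde{H}$) can also be performed on the corresponding side of $G \times H$. Carrying out both such sequences realises $\widetilde{G} \times \widetilde{H}$ as a descendant of $G \times H$. Consequently $\widetilde{\widetilde{G} \times \widetilde{H}}$ is a descendant of a descendant of $G \times H$, hence a descendant of $G \times H$, and it is idempath-identified by construction. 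Theorem~\ref{thm:uniquetilde} then forces $\widetilde{\widetilde{G} \times \widetilde{H}} = \widetilde{G \times H}$.

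I do not anticipate a serious obstacle in either direction: the first equality is essentially a packaged application of Lemma~\ref{lem:retractfrommorph}, and the second is an immediate corollary of the uniqueness part of Theorem~\ref{thm:uniquetilde}. The only point requiring modest care is that $r_G$ and $r_H$ (respectively the two sequences of idempath identifications) really do compose across the gluing seam without conflict: for retractions this reduces to preservation of roots, and for idempath identifications the pairs of vertices involved on the two sides are disjoint, so they commute freely by Lemma~\ref{lem:descendant}\ref{lem:descendant2}.
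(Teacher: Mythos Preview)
Your proposal is correct and follows essentially the same approach as the paper: for the first equality the paper likewise assembles $\rho_G$ and $\rho_H$ across the seam (invoking confluence of retracts directly rather than via Lemma~\ref{lem:retractfrommorph}, which amounts to the same thing), and for the second it makes exactly your observation that $\widetilde{G}\times\widetilde{H}\in D(G\times H)$ and then appeals to uniqueness. One minor quibble: your parenthetical remark that the vertex pairs on the two sides are disjoint is not quite accurate (the seam vertex may appear in both), but this is irrelevant since Lemma~\ref{lem:descendant}\ref{lem:descendant1} already lets you perform the two sequences one after the other without needing commutation.
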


\begin{proof}
First, suppose $\rho_G$ and $\rho_H$ are retracts on $G$ and $H$ which have images $\overline{G}$ and $\overline{H}$ respectively. Since the amalgamated vertex in $G \times H$ is a distinguished vertex of both $G$ and $H$, it must be fixed by both $\rho_G$ and $\rho_H$. It follows that there is a unique morphism $\rho$ on $G \times H$ which extends the maps $\rho_G$ and $\rho_H$ in the expected way. Since $\rho_G$ and $\rho_H$ are idempotent, it follows that $\rho$ is idempotent. Clearly $\rho$ fixes the start and end vertex of $G \times H$, hence $\rho$ is a retract on $G \times H$. It has image $\rho(G \times H) = \overline{G} \times \overline{H}$. By confluence of retractions (see, for example, \cite{hell:core}), we have that $\overline{G \times H} = \overline{ \rho(G \times H) } = \overline{\overline{G}\times\overline{H}}$.

Next, observe that any idempath in $G$ is also an idempath in $G \times H$. Similarly, any idempath in $H$ is an idempath in $G \times H$. Thus we have that $\widetilde{G} \times \widetilde{H} \in D(G \times H)$. Clearly then $\widetilde{\widetilde{G} \times \widetilde{H}} = \widetilde{G \times H}$.
\end{proof}

\begin{lem}\label{lem:pluswithtildeandprune}
Let $G$ be an $X$-graph. Then $\overline{G^{(+)}} = \overline{{\overline{G}}^{(+)}}$ and $\widetilde{G^{(+)}} = \widetilde{G}^{(+)}$.
\end{lem}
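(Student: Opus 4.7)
The plan is to prove the two equalities separately; both rest on the observation that the $(+)$ operation only relocates the end vertex to coincide with the start, while retraction and idempath identification each fix the start vertex and depend only on structure at or reachable from it.

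For the first equality, I would fix a retract $\rho\colon G \to G$ with image $\overline{G}$, noting that $\rho$ necessarily fixes $\alpha(G)$. Then ${\overline{G}}^{(+)}$ is a birooted subgraph of $G^{(+)}$: indeed $\overline{G}$ is a subgraph of $G$ containing $\alpha(G)$, and in both $G^{(+)}$ and ${\overline{G}}^{(+)}$ the start and end vertex coincide at $\alpha(G)$. The same underlying map $\rho$, now regarded as a map $G^{(+)} \to {\overline{G}}^{(+)}$, remains a graph morphism and sends both new roots to themselves (since $\rho(\alpha(G)) = \alpha(G)$). Applying Lemma \ref{lem:retractfrommorph} then yields $\overline{G^{(+)}} \cong \overline{{\overline{G}}^{(+)}}$.

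For the second equality, the key observation is that the notion of idempath, and therefore the partial action of vertex pairs used to construct $\widetilde{\cdot}$ via Theorem \ref{thm:uniquetilde}, depends only on the edge-labelled directed graph structure together with the start vertex (used for the reachability condition); it makes no reference to the end vertex. Since $G$ and $G^{(+)}$ share this data and differ only in their end vertex, any sequence of idempath identifications valid on $G$ is valid on $G^{(+)}$ and vice versa, and produces the same underlying graph. Hence $\widetilde{G^{(+)}}$ and $\widetilde{G}$ share the same underlying edge-labelled digraph and the same start vertex (the $\sim$-image of $\alpha(G)$). The end vertex of $\widetilde{G^{(+)}}$ is the $\sim$-image of $\alpha(G^{(+)}) = \alpha(G)$, which equals that common start vertex. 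Thus $\widetilde{G^{(+)}}$ is precisely $\widetilde{G}$ with end vertex relocated to start, that is, $\widetilde{G}^{(+)}$.

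The only subtle point is the claim that the construction of $\widetilde{\cdot}$ is independent of the end vertex; once this is extracted from the inductive proof of Theorem \ref{thm:uniquetilde}, both parts amount to essentially routine bookkeeping.
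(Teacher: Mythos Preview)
Your proof is correct and follows essentially the same route as the paper's. The only cosmetic difference is in the first equality: the paper observes directly that $\rho$, being idempotent and fixing $\alpha(G)=\alpha(G^{(+)})=\omega(G^{(+)})$, is already a retract of $G^{(+)}$ with image ${\overline{G}}^{(+)}$, and then invokes confluence of retracts; you instead appeal to Lemma~\ref{lem:retractfrommorph}, which is a slight overkill since $\rho$ is already idempotent, but of course still yields the conclusion.
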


\begin{proof}
Suppose $\rho$ is a retract of $G$ with image $\overline{G}$. By definition, $\rho$ defines a graph endomorphism on $G$, with $V(G) = V\left(G^{(+)}\right)$ and $E(G) = E\left(G^{(+)}\right)$. Also, $\rho$ fixes the start vertex of $G$, so certainly fixes the start and end vertex of $G^{(+)}$. Thus $\rho$ defines a retract of $G^{(+)}$ with image $\overline{G}^{(+)}$. By confluence of retractions, we have that $\overline{G^{(+)}} = \overline{\rho(G^{(+)})} = \overline{{\overline{G}}^{(+)}}$.

Moreover, it is clear that idempath identifications are independent of the location of the end vertex of $G$. Hence clearly $\widetilde{G^{(+)}} = \widetilde{G}^{(+)}$ since $G^{(+)}$ and $G$ have the same underlying graph.
\end{proof}

\begin{lem}\label{lem:stacking}
Let $G$ be an $X$-graph. Then $\widetilde{\overline{\widetilde{G}}} = \overline{\widetilde{G}} = \overline{\widetilde{\overline{G}}}$.
\end{lem}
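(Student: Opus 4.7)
The plan is to treat the two equalities separately, as they reflect different aspects of the interplay between $\widetilde{\cdot}$ and $\overline{\cdot}$.

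For the first equality $\widetilde{\overline{\widetilde{G}}} = \overline{\widetilde{G}}$, the key observation is that an $X$-graph is idempath identified exactly when every idempath in it is a cycle, and that this property is inherited by birooted subgraphs: whether a path is a cycle depends only on its endpoints, not on the surrounding graph, and any idempath in a subgraph is automatically an idempath in the ambient graph. Since $\overline{\widetilde{G}}$ is a birooted subgraph of the already idempath-identified $\widetilde{G}$, it is itself idempath identified, so no further identifications are possible on it.

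For the second equality $\overline{\widetilde{G}} = \overline{\widetilde{\overline{G}}}$, the idea is to produce morphisms of $X$-graphs in both directions between $\widetilde{G}$ and $\widetilde{\overline{G}}$, and then exploit Lemma~\ref{lem:retractfrommorph}. These morphisms will be built via a universal property, which I plan to establish first: any $X$-graph morphism from $G$ to an idempath-identified $X$-graph factors through the canonical map $G \to \widetilde{G}$. This follows by induction along any sequence of idempath identifications producing $\widetilde{G}$, since a morphism to an idempath-identified target must identify the endpoints of every idempath (their images are idempaths there, forced to be cycles). Applying this universal property to the composition $G \twoheadrightarrow \overline{G} \to \widetilde{\overline{G}}$ gives $\phi \colon \widetilde{G} \to \widetilde{\overline{G}}$; applying it (with $\overline{G}$ in place of $G$) to the composition $\overline{G} \hookrightarrow G \to \widetilde{G}$ gives $\psi \colon \widetilde{\overline{G}} \to \widetilde{G}$.

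To conclude, I would amalgamate $\widetilde{G}$ and $\widetilde{\overline{G}}$ along their shared start and end vertices to form an $X$-graph $Y$ containing both as birooted subgraphs. The map $Y \to Y$ that acts as $\phi$ on $\widetilde{G}$ and as the identity on $\widetilde{\overline{G}}$ is a well-defined $X$-graph morphism (the two pieces agree on the shared start and end vertices) with image contained in $\widetilde{\overline{G}}$, so Lemma~\ref{lem:retractfrommorph} yields $\overline{Y} \cong \overline{\widetilde{\overline{G}}}$; the symmetric construction using $\psi$ gives $\overline{Y} \cong \overline{\widetilde{G}}$, and the result follows. The main obstacle is the universal property step: it requires carefully checking that at each stage of the iterated idempath identification producing $\widetilde{G}$, the morphism really descends through the identified vertex pair. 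Once that is in hand, the amalgamation trick packages the rest into a single application of Lemma~\ref{lem:retractfrommorph} in each direction.
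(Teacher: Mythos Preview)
Your argument for the first equality is essentially the paper's own proof.

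For the second equality your route is correct but genuinely different from the paper's. The paper also constructs morphisms in both directions between $\widetilde{G}$ and $\widetilde{\overline{G}}$, but it does so by directly comparing kernels using the constructable-semiwalk characterisation (Theorem~\ref{thm:whenidentify}), and crucially it proves that the map $\vartheta\colon \widetilde{\overline{G}} \to \widetilde{G}$ is \emph{injective}. This injectivity lets the paper view $\widetilde{\overline{G}}$ as an actual birooted subgraph of $\widetilde{G}$ and apply Lemma~\ref{lem:retractfrommorph} once. Your approach instead packages the factoring as a universal property of $\widetilde{(\cdot)}$ --- proved by an elementary induction along the chain of identifications, with no appeal to semiwalks --- and then sidesteps the injectivity question entirely via the amalgamated graph $Y$, at the cost of invoking Lemma~\ref{lem:retractfrommorph} twice. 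What you gain is a cleaner, more self-contained argument that does not depend on the semiwalk machinery of Section~\ref{sec:idemiden}; what the paper's version gains is the slightly stronger intermediate fact that $\widetilde{\overline{G}}$ genuinely embeds in $\widetilde{G}$.
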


\begin{proof}
We first show the first equality, that is the pruning of a idempath identified graph is idempath identified. Suppose there exists vertices $a,b \in V(G)$ and an idempath from $a$ to $b$ in $\overline{\widetilde{G}}$. Note that the vertices $a,b$ and this idempath must exist in $\widetilde{G}$ since $\overline{\widetilde{G}}$ is a subgraph of $\widetilde{G}$. But $\widetilde{G}$ is idempath identified, so $a \equiv b$ in $\widetilde{G}$. Then certainly $a \equiv b$ in $\overline{\widetilde{G}}$. Hence $\overline{\widetilde{G}}$ is idempath identified and thus $\overline{\widetilde{G}} = \widetilde{\overline{\widetilde{G}}}$ as required.

We now show the second equality. We first show that there exists graph morphisms $\phi: \widetilde{\overline{G}} \hookrightarrow \widetilde{G}$ and $\psi: \widetilde{G} \twoheadrightarrow \widetilde{\overline{G}}$. Since $\overline{G}$ is a pruned subgraph of $G$, there certainly exist some embedding $\iota : \overline{G} \hookrightarrow G$ and some retraction $\rho: G \twoheadrightarrow \overline{G}$ such that $\rho \circ \iota$ is the identity map. Moreover, there exists surjections $f: \overline{G} \twoheadrightarrow \widetilde{\overline{G}}$ and $g: G \twoheadrightarrow \widetilde{G}$ given by the respective $\sim$ maps. A diagram of these maps can be seen in Figure \ref{fig:maps}.

\begin{figure}[ht]
    \centering
    \begin{tikzcd}
        \overline{G} \arrow[rr, "\iota", hook, bend right] \arrow[dd, "f"', two heads] &  & G \arrow[ll, "\rho", two heads, bend right] \arrow[dd, "g", two heads]\\
        &  & \\
        {\widetilde{\overline{G}}} \arrow[rr, "\vartheta", dotted, hook, bend right]     &  & {\widetilde{G}} \arrow[ll, "\psi", two heads, dotted, bend right]       
    \end{tikzcd}
    \caption{The maps of Lemma \ref{lem:stacking}.}
    \label{fig:maps}
\end{figure}
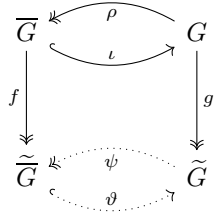

We first show that $g \circ \iota$ factors through $f$. Indeed, if two vertices $a,b$ of $\overline{G}$ are identified by $f$, then there is an constructable semiwalk between them in $\overline{G}$. As $\iota$ is an embedding, this semiwalk maps to a constructable semiwalk in $G$ from $\iota(a)$ to $\iota(b)$, hence $(g\circ \iota)(a) = (g \circ \iota)(b)$. Thus $\ker(f) \subseteq \ker(g \circ \iota)$. This gives rise to a well-defined map $\vartheta: \widetilde{\overline{G}} \to \widetilde{G}$ given on edges/vertices by $a \mapsto (g\circ\iota)(A)$ where $f(A) = a$. Moreover $\vartheta$ is a graph morphism: we note that for any edge $e$ adjacent to vertex $a$ in $\widetilde{\overline{G}}$, there exists some pre-images $E$ and $A$ under $f$ in which $A$ is adjacent to $E$ in $\overline{G}$. Since $\iota$ and $g$ are graph morphisms, it follows that $(g\circ \iota)(E)$ is adjacent to $(g\circ \iota)(A)$ in $\widetilde{G}$, and hence that $\vartheta$ is a graph morphism. It remains to show that $\vartheta$ is injective. It is sufficient to show that $\ker(g \circ \iota) \subseteq \ker(f)$. Indeed, if two vertices $a,b$ of $\overline{G}$ are identified by $g \circ \iota$, then there exists a constructable semiwalk from $\iota(a)$ to $\iota(b)$ in $G$. The image of this semiwalk under the retraction $\rho$ is a constructable semiwalk from $a$ to $b$ in $\overline{G}$, and thus $f(a) = f(b)$. Hence $\ker(g \circ \iota) = \ker(f)$ and $\vartheta$ is an embedding.

We now similarly construct $\psi$. We have that $f \circ \rho$ factors through $g$, since if two vertices $a,b$ of $G$ are identified by $g$, then the constructable semiwalk between them maps via $\rho$ to a constructable semiwalk between $\rho(a)$ and $\rho(b)$ in $\overline{G}$. Hence $\rho(a)$ and $\rho(b)$ are identified by $f$. As before, this gives rise to a well-defined map $\psi: \widetilde{G} \to \widetilde{\overline{G}}$ given on edges/vertices by $a \mapsto (f \circ \rho)(A)$ where $g(A) = a$. Via identical reasoning to above, $\psi$ may be seen to be a graph morphism. Moreover, as $\psi \circ g = f \circ \rho$, $\psi$ is surjective since $g,f$ and $\rho$ are surjective.

We have therefore completed the diagram in Figure \ref{fig:maps}. The map $\vartheta$ allows us to realise $\widetilde{\overline{G}}$ as a subgraph of $\widetilde{G}$. Since $\psi$ is a morphism from $\widetilde{G}$ to $\widetilde{\overline{G}}$, we have $\overline{\widetilde{G}} = \overline{\widetilde{\overline{G}}}$ by Lemma \ref{lem:retractfrommorph}.\end{proof}

\begin{lem}\label{lem:tildeandbaridempotent}
Let $G$ be an $X$-graph. Then $\widetilde{\widetilde{G}} = \widetilde{G}$ and $\overline{\overline{G}} = \overline{G}$.
\end{lem}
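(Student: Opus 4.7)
The plan is to observe that both equalities are essentially immediate from the characterising properties of the two operations, namely that $\widetilde{G}$ is an idempath identified graph and $\overline{G}$ is a retracted graph, together with the respective uniqueness results.

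For the first equality $\widetilde{\widetilde{G}} = \widetilde{G}$, I would appeal directly to Theorem~\ref{thm:uniquetilde}. Since $\widetilde{G}$ is by construction an idempath identified descendant of $G$, no non-trivial idempath identification can be applied to it; the empty sequence of identifications exhibits $\widetilde{G}$ as an idempath identified descendant of itself. By the uniqueness clause of Theorem~\ref{thm:uniquetilde}, applied now to the graph $\widetilde{G}$ in place of $G$, the graph $\widetilde{\widetilde{G}}$ is this unique idempath identified descendant, hence equal to $\widetilde{G}$.

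For the second equality $\overline{\overline{G}} = \overline{G}$, the argument is analogous, but appeals to the defining property of the core recalled earlier in the paper: $\overline{G}$ is retracted, meaning it admits no non-identity retracts. Consequently the only retract of $\overline{G}$ is the identity map, whose image is $\overline{G}$ itself, so $\overline{\overline{G}} = \overline{G}$ (uniqueness of the core up to isomorphism being precisely the statement already invoked in Lemma~\ref{lem:retractfrommorph}).

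I do not anticipate any real obstacle: both claims are pure idempotence statements that unwind directly from the definitions and from Theorem~\ref{thm:uniquetilde}. The only minor subtlety worth flagging is the convention, already adopted in the paper, of identifying graphs with their isomorphism classes, so that ``equal'' here means ``isomorphic as $X$-graphs''; under that convention no additional checking of canonical representatives is required.
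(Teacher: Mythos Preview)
Your proposal is correct and matches the paper's own proof, which simply states ``This is clear from the definitions.'' Your write-up merely unpacks that sentence by naming the relevant uniqueness results (Theorem~\ref{thm:uniquetilde} for $\sim$ and the defining property of the core for $\overline{\phantom{G}}$), so there is no substantive difference in approach.
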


\begin{proof}
This is clear from the definitions.
\end{proof}

Note that unpruned multiplication and $(+)$ do not define a left adequate structure on the set of $X$-graphs. Indeed, the identity $G^{(+)}G = G$ does not hold for any non-trivial $G$ (our multiplication has no requirement for retraction to be performed). However, we may use these operations to define a left adequate structure of the set of idempath identified and retracted graphs resulting from trees.

We define $\textrm{PT}(C;X)$ to be the set of all (isomorphism types of) $X$-graphs of the form \[\mathrm{PT}(C;X) := \left.\left\{\overline{\widetilde{T}}\ \right|\ T \in \mathrm{FLAd}(X)\right\}.\] We call $\overline{\widetilde{T}}$ the \textit{pretzel} of $T$. We define a multiplication and unary operation $+$ on $\textrm{PT}(C;X)$ by \[GH := \overline{\widetilde{G \times H}} \textrm{ and } G^+ := \overline{\widetilde{G^{(+)}}}.\]These are well-defined operations on graphs in $\mathrm{PT}(C;X)$:

\begin{lem}\label{lem:comptildebar}
    For any trees $T,S \in \mathrm{FLAd}(X)$, $\overline{\widetilde{T}}\ \overline{\widetilde{S}} = \overline{\widetilde{T \times S}}$ and $\overline{\widetilde{T}}^+ = \overline{\widetilde{T^{(+)}}}$.
\end{lem}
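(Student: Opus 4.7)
The proof should be a straightforward chain of rewrites using the four preparatory lemmas (\ref{lem:timeswithtildeandprune}, \ref{lem:pluswithtildeandprune}, \ref{lem:stacking}, and \ref{lem:tildeandbaridempotent}). These lemmas are precisely the commutation/idempotency relations between the operators $\widetilde{\cdot}$, $\overline{\cdot}$, $\times$, and $^{(+)}$ needed to push outer operators past inner ones and verify the well-definedness statement at hand.

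For the multiplicative identity, I would unpack the definition $\overline{\widetilde{T}}\,\overline{\widetilde{S}} = \overline{\widetilde{\overline{\widetilde{T}} \times \overline{\widetilde{S}}}}$ and show this equals $\overline{\widetilde{T \times S}}$ by the chain
\[
\overline{\widetilde{T \times S}} = \overline{\widetilde{\widetilde{T} \times \widetilde{S}}} = \overline{\widetilde{\overline{\widetilde{T} \times \widetilde{S}}}} = \overline{\widetilde{\overline{\widetilde{T}} \times \overline{\widetilde{S}}}}.
\]
The first equality uses the $\widetilde{}$-half of Lemma \ref{lem:timeswithtildeandprune} ($\widetilde{G \times H} = \widetilde{\widetilde{G} \times \widetilde{H}}$). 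The second uses the second equality of Lemma \ref{lem:stacking} ($\overline{\widetilde{K}} = \overline{\widetilde{\overline{K}}}$), applied with $K = \widetilde{T} \times \widetilde{S}$. The third uses the $\overline{}$-half of Lemma \ref{lem:timeswithtildeandprune} ($\overline{G \times H} = \overline{\overline{G} \times \overline{H}}$), applied inside the outer $\widetilde{\cdot}$ with $G = \widetilde{T}$ and $H = \widetilde{S}$.

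For the $+$ identity, I would unpack $\overline{\widetilde{T}}^{+} = \overline{\widetilde{(\overline{\widetilde{T}})^{(+)}}}$ and go the other way around:
\[
\overline{\widetilde{T^{(+)}}} = \overline{\widetilde{T}^{(+)}} = \overline{\,\overline{\widetilde{T}}^{(+)}\,} = \overline{\widetilde{\,\overline{\widetilde{T}}^{(+)}\,}}.
\]
The first equality uses $\widetilde{G^{(+)}} = \widetilde{G}^{(+)}$ from Lemma \ref{lem:pluswithtildeandprune}. The second uses $\overline{G^{(+)}} = \overline{\overline{G}^{(+)}}$ from the same lemma. For the third equality, observe that $\overline{\widetilde{T}}$ is itself idempath identified (first equality of Lemma \ref{lem:stacking}), so $\widetilde{\overline{\widetilde{T}}^{(+)}} = \widetilde{\overline{\widetilde{T}}}^{(+)} = \overline{\widetilde{T}}^{(+)}$ by $\widetilde{G^{(+)}}=\widetilde{G}^{(+)}$ again, and then applying $\overline{\cdot}$ gives the required identity.

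There is no real obstacle here — the four preparatory lemmas are essentially engineered for exactly this bookkeeping. The only thing to watch carefully is the distinction between the two halves of Lemma \ref{lem:stacking} (that $\overline{\widetilde{G}}$ is idempath identified, versus that $\overline{\widetilde{\cdot}}$ is insensitive to prepending an $\overline{\cdot}$), since one of them is used in each chain above and confusing them will leave an extra $\widetilde{\cdot}$ in the wrong place. Once the sequence of rewrites is chosen correctly, each step is a single invocation of one of the earlier lemmas with no calculation required.
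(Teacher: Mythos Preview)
Your approach is essentially the same as the paper's: both proofs are chains of rewrites using Lemmas~\ref{lem:timeswithtildeandprune}--\ref{lem:tildeandbaridempotent}. There is one small slip in your multiplicative chain: the third step, going from $\overline{\widetilde{\overline{\widetilde{T} \times \widetilde{S}}}}$ to $\overline{\widetilde{\overline{\widetilde{T}} \times \overline{\widetilde{S}}}}$, is not a single application of the $\overline{\cdot}$-half of Lemma~\ref{lem:timeswithtildeandprune}. That lemma gives $\overline{\widetilde{T} \times \widetilde{S}} = \overline{\overline{\widetilde{T}} \times \overline{\widetilde{S}}}$, so substituting inside $\overline{\widetilde{\cdot}}$ produces $\overline{\widetilde{\overline{\overline{\widetilde{T}} \times \overline{\widetilde{S}}}}}$ with an extra overline; one more invocation of Lemma~\ref{lem:stacking} is needed to remove it. The paper makes this explicit as two separate steps. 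Your $+$ chain is correct and in fact slightly shorter than the paper's, which inserts an extra $\overline{\cdot}$ first and then removes it at the end, using five rewrites to your three.
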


\begin{proof}
    Firstly, \[\overline{\widetilde{T}}\ \overline{\widetilde{S}} = \overline{\widetilde{\overline{\widetilde{T}}\times\overline{\widetilde{S}}}} \stackrel{(1)}{=} \overline{\widetilde{\overline{\overline{\widetilde{T}}\times\overline{\widetilde{S}}}}} \stackrel{(2)}{=} \overline{\widetilde{\overline{\widetilde{T}\times\widetilde{S}}}} \stackrel{(3)}{=} \overline{\widetilde{\widetilde{T}\times\widetilde{S}}} \stackrel{(4)}{=} \overline{\widetilde{T \times S}}\]where $(2)$ and $(4)$ follow from Lemma \ref{lem:timeswithtildeandprune} and $(1)$ and $(3)$ follow from Lemma \ref{lem:stacking}.

    Secondly,\[\overline{\widetilde{T}}^+ = \overline{\widetilde{{\overline{\widetilde{T}}}^{(+)}}} \stackrel{(1)}{=} \overline{\widetilde{\overline{{\overline{\widetilde{T}}}^{(+)}}}} \stackrel{(2)}{=} \overline{\widetilde{\overline{{\widetilde{T}}^{(+)}}}} \stackrel{(3)}{=} \overline{\widetilde{\overline{\widetilde{T^{(+)}}}}} \stackrel{(4)}{=} \overline{\widetilde{\widetilde{T^{(+)}}}} \stackrel{(5)}{=} \overline{\widetilde{T^{(+)}}}\]where $(2)$ and $(3)$ follow from Lemma \ref{lem:pluswithtildeandprune}, $(1)$ and $(4)$ follow from Lemma \ref{lem:stacking} and $(5)$ follows from Lemma \ref{lem:tildeandbaridempotent}.\end{proof}

Recall the definition of $\textrm{FLAd}(X)$. Its operations are given by $TS := \overline{T \times S}$ and $T^+ := \overline{T^{(+)}}$ where we identify trees with their isomorphism types. Idempath identification and retraction are well-defined processes when extended to these isomorphism types of trees in $\textrm{FLAd}(X)$. In particular, we have the following result.

\begin{thm}\label{thm:PhifromUAtoPA}
    The map $\Phi : \mathrm{FLAd}(X) \to \mathrm{PT}(C;X)$ defined by $T \mapsto \overline{\widetilde{T}}$ is a surjective $(2,1,0)$-morphism.
\end{thm}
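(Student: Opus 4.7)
The plan is to verify the four conditions defining a $(2,1,0)$-morphism (preservation of identity, multiplication, and $+$) together with surjectivity, all of which will fall out cleanly from the technical lemmas already established.

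First, well-definedness of $\Phi$ on isomorphism types is immediate: both the $\sim$ map and the core construction depend only on the isomorphism type of the input $X$-graph. Surjectivity is then trivial, since every element of $\mathrm{PT}(C;X)$ is by definition of the form $\overline{\widetilde{T}}$ for some $T \in \mathrm{FLAd}(X)$. For the identity, note that the identity of $\mathrm{FLAd}(X)$ is the single-vertex tree $I$ with $\alpha(I) = \omega(I)$; it has no non-trivial paths, so no non-trivial idempath identifications or retractions are possible, and $\Phi(I) = \overline{\widetilde{I}} = I$ is the identity of $\mathrm{PT}(C;X)$.

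The real content is the compatibility with the binary and unary operations. Recall that in $\mathrm{FLAd}(X)$ we have $TS = \overline{T \times S}$ and $T^+ = \overline{T^{(+)}}$. For multiplication, I would compute
\[
\Phi(TS) = \overline{\widetilde{\overline{T \times S}}} = \overline{\widetilde{T \times S}} = \overline{\widetilde{T}} \cdot \overline{\widetilde{S}} = \Phi(T)\Phi(S),
\]
where the first equality is the definition, the second uses Lemma~\ref{lem:stacking} (which says $\overline{\widetilde{\overline{G}}} = \overline{\widetilde{G}}$, so the inner core inside the $\widetilde{(\cdot)}$ can be discarded once we take the outer core), and the third is Lemma~\ref{lem:comptildebar}. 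For the $+$ operation, an entirely parallel computation gives
\[
\Phi(T^+) = \overline{\widetilde{\overline{T^{(+)}}}} = \overline{\widetilde{T^{(+)}}} = \overline{\widetilde{T}}^{\,+} = \Phi(T)^+,
\]
again using Lemma~\ref{lem:stacking} for the middle equality and Lemma~\ref{lem:comptildebar} for the third.

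There is no serious obstacle remaining, since the work has been done in Lemmas~\ref{lem:timeswithtildeandprune}, \ref{lem:pluswithtildeandprune}, \ref{lem:stacking}, \ref{lem:tildeandbaridempotent} and \ref{lem:comptildebar}. The only subtlety worth flagging explicitly is that $\widetilde{\overline{G}}$ need not equal $\widetilde{G}$ in general (retracting before identifying can identify different sets of vertices than identifying then retracting), so the use of Lemma~\ref{lem:stacking} to remove the inner $\overline{(\cdot)}$ in $\overline{\widetilde{\overline{T \times S}}}$ is genuinely necessary rather than cosmetic; this is the one place where careless bookkeeping could fail.
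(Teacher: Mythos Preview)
Your proof is correct and follows essentially the same route as the paper: both arguments reduce the claim to Lemma~\ref{lem:comptildebar} and Lemma~\ref{lem:stacking}, with the only cosmetic difference being that you run the chain of equalities from $\Phi(TS)$ toward $\Phi(T)\Phi(S)$ while the paper runs it the other way. Your explicit remark that Lemma~\ref{lem:stacking} is doing genuine work in removing the inner core is a helpful addition not spelled out in the paper.
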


\begin{proof}
    By definition, any graph in $\mathrm{PT}(C;X)$ is $\Phi(T)$ for some tree $T$. Now let $T,S \in \mathrm{FLAd}(X)$. We show that $\Phi(TS) = \Phi(T)\Phi(S)$ and $\Phi\left(T^{+}\right) = (\Phi(T))^+$.
    
    Now by Lemma \ref{lem:comptildebar}, we have
    \begin{equation*}
        \Phi(T)\Phi(S) = \overline{\widetilde{T}}\ \overline{\widetilde{S}} = \overline{\widetilde{T \times S}} \stackrel{(\dagger)}{=} \overline{\widetilde{\overline{T \times S}}} = \Phi(TS)
    \end{equation*}
    and
    \begin{equation*}
        (\Phi(T))^+ = \overline{\widetilde{T}}^+ = \overline{\widetilde{T^{(+)}}} \stackrel{(\dagger)}{=} \overline{\widetilde{\overline{T^{(+)}}}} = \Phi\left(T^{(+)}\right).
    \end{equation*}where the equalities marked $(\dagger)$ follow from Lemma \ref{lem:stacking}.
    
    Finally, it is clear that the identity tree $1$ has no idempath identifications or retractions necessary, and thus $\Phi(1) = 1$. Therefore $\Phi$ is a surjective $(2,1,0)$-morphism.
\end{proof}

\begin{cor}\label{cor:PAmonoid}
The multiplication on $\mathrm{PT}(C;X)$ is associative, $+$ is an idempotent operation and $\mathrm{PT}(C;X)$ is $X$-generated (in the $(2,1,0)$ signature). The subsemigroup generated by the image of $+$ is a semilattice.
\end{cor}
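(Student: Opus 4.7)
The proof strategy is to lean entirely on Theorem \ref{thm:PhifromUAtoPA}: once we have a surjective $(2,1,0)$-morphism $\Phi \colon \mathrm{FLAd}(X) \to \mathrm{PT}(C;X)$, every universally quantified $(2,1,0)$-identity satisfied by $\mathrm{FLAd}(X)$ automatically descends to $\mathrm{PT}(C;X)$. Since $\mathrm{FLAd}(X)$ is the free left adequate monoid, it is in particular a left adequate monoid, so its multiplication is associative, its $+$ is idempotent, and its idempotents form a (commutative) semilattice.

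Concretely, I would argue each clause as follows. For associativity, pick $G_1,G_2,G_3 \in \mathrm{PT}(C;X)$, lift them via surjectivity to trees $T_i \in \mathrm{FLAd}(X)$ with $\Phi(T_i) = G_i$, and compute
\begin{equation*}
(G_1 G_2) G_3 = (\Phi(T_1)\Phi(T_2))\Phi(T_3) = \Phi((T_1 T_2)T_3) = \Phi(T_1(T_2 T_3)) = G_1(G_2 G_3),
\end{equation*}
using that $\Phi$ preserves products and that $\mathrm{FLAd}(X)$ is associative. Idempotency of $+$ is the same pattern: $(G^+)^+ = \Phi(T)^{++} = \Phi(T^{++}) = \Phi(T^+) = G^+$.

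For $X$-generation, note the base trees $x \in X$ lie in $\mathrm{FLAd}(X)$ and generate it as a $(2,1,0)$-algebra. Their images $\Phi(x) \in \mathrm{PT}(C;X)$ are the single-edge birooted graphs; since $\Phi$ is surjective and a $(2,1,0)$-morphism, any $G = \Phi(T)$ is obtained from the $\Phi(x)$ by the same $(2,1,0)$-term expressing $T$ in terms of the $x \in X$ in $\mathrm{FLAd}(X)$. Hence $\mathrm{PT}(C;X)$ is $X$-generated.

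Finally, for the semilattice claim, the subsemigroup of $\mathrm{FLAd}(X)$ generated by $\{T^+ : T \in \mathrm{FLAd}(X)\}$ is a semilattice (idempotents in a left adequate monoid commute and are idempotent). Its image under the surjective homomorphism $\Phi$ is exactly the subsemigroup of $\mathrm{PT}(C;X)$ generated by $\{\Phi(T)^+ : T \in \mathrm{FLAd}(X)\} = \{G^+ : G \in \mathrm{PT}(C;X)\}$, and semigroup homomorphisms carry commutative bands to commutative bands, so this image is again a semilattice. There is no real obstacle here; the entire corollary is a mechanical transfer across $\Phi$, and the only thing to be a little careful about is to invoke surjectivity when lifting arbitrary elements of $\mathrm{PT}(C;X)$ to trees.
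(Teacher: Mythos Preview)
Your proposal is correct and follows essentially the same approach as the paper, which simply states that the result follows from the corresponding properties of $\mathrm{FLAd}(X)$ via the surjective $(2,1,0)$-morphism $\Phi$ of Theorem~\ref{thm:PhifromUAtoPA}. You have just spelled out in detail the mechanical transfer of identities across $\Phi$ that the paper leaves implicit.
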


\begin{proof}
    This follows from these properties of $\mathrm{FLAd}(X)$.
\end{proof}

We note that the definition of $\textrm{PT}(C;X)$ and the operations upon it depend only on which words in $C$ represent the identity. Hence by Corollary~\ref{cor:twosided} we have:
\begin{prop}\label{prop:twosided}
For every $X$-generated right cancellative monoid $C$, we have that $\mathrm{PT}(C;X)$ is isomorphic (as a $(2,1,0)$-algebra) to $\mathrm{PT}(C';X)$ for some $X$-generated two-sided cancellative monoid $C'$.
\end{prop}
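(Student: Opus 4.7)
The plan is to argue that the entire construction of $\mathrm{PT}(C;X)$ depends on $C$ only through the set
\[
E_C \;:=\; \{\, w \in X^* \mid [w]_C = 1 \,\},
\]
and then to invoke Corollary~\ref{cor:twosided} to find a two-sided cancellative $C'$ with $E_{C'} = E_C$.

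First I would apply Corollary~\ref{cor:twosided} to obtain an $X$-generated two-sided cancellative monoid $C'$ such that a word $w \in X^*$ represents the identity in $C$ if and only if it represents the identity in $C'$; equivalently, $E_C = E_{C'}$. Next, I would observe that the notion of a $C$-idempath in an $X$-graph $G$ is by definition a path whose label lies in $E_C$, and so depends only on $E_C$. Consequently, the collection of valid idempath identifications on any $X$-graph is the same whether performed with respect to $C$ or with respect to $C'$. By Theorem~\ref{thm:uniquetilde}, the idempath identified descendant $\widetilde{G}$ is determined uniquely by these identifications, so $\widetilde{G}$ computed with respect to $C$ coincides with $\widetilde{G}$ computed with respect to $C'$.

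I would then note that the pruning/retraction operation $G \mapsto \overline{G}$ and the graph operations $\times$ and $(+)$ make no reference to $C$ at all. Therefore the set
\[
\mathrm{PT}(C;X) \;=\; \left\{\, \overline{\widetilde{T}} \mid T \in \mathrm{FLAd}(X) \,\right\}
\]
and the operations $GH = \overline{\widetilde{G \times H}}$, $G^+ = \overline{\widetilde{G^{(+)}}}$, together with the distinguished identity element, coincide on the nose with the corresponding data for $C'$. This yields literal equality (hence a fortiori isomorphism) of $\mathrm{PT}(C;X)$ and $\mathrm{PT}(C';X)$ as $(2,1,0)$-algebras.

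There is no real obstacle here: the statement is a straightforward bookkeeping consequence of the observation that ``$C$-idempath'' is a condition on labels alone, combined with Corollary~\ref{cor:twosided}. The only thing worth being careful about is making explicit, as I did above, that every layer of the construction (the descendants, the map $\sim$, the core, the unpruned product, the $(+)$ operation, and the generating set $X$) is insensitive to any feature of $C$ beyond which words represent $1$.
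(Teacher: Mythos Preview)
Your proposal is correct and follows essentially the same approach as the paper: the paper's proof is the single sentence preceding the statement, observing that the definition of $\mathrm{PT}(C;X)$ and its operations depend only on which words in $C$ represent the identity, and then invoking Corollary~\ref{cor:twosided}. Your version simply unpacks this observation layer by layer.
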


Since the class of left adequate monoids does not form a variety, Theorem \ref{thm:PhifromUAtoPA} does not allow us to immediately conclude that $\mathrm{PT}(C;X)$ is left adequate. For this, a little more work is needed.

\begin{lem}\label{lem:x+xequalsx}
    $G^+G = G$ for any $G \in \mathrm{PT}(C;X)$.
\end{lem}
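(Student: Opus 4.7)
The plan is to lift the identity from the free left adequate monoid $\mathrm{FLAd}(X)$ via the surjective $(2,1,0)$-morphism $\Phi$ established in Theorem~\ref{thm:PhifromUAtoPA}. Since every element of $\mathrm{PT}(C;X)$ is by definition of the form $G = \overline{\widetilde{T}} = \Phi(T)$ for some retracted tree $T \in \mathrm{FLAd}(X)$, and since $\mathrm{FLAd}(X)$ is already known to be left adequate (so in particular $T^+ T = T$ holds there), the conclusion should follow by pushing this identity through $\Phi$.

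Concretely, I would fix $G \in \mathrm{PT}(C;X)$, pick a representative $T \in \mathrm{FLAd}(X)$ with $G = \Phi(T)$, and, using that $\Phi$ preserves both the binary operation and the unary $+$, compute
\[ G^+ G \;=\; \Phi(T)^+\,\Phi(T) \;=\; \Phi(T^+)\,\Phi(T) \;=\; \Phi(T^+ T) \;=\; \Phi(T) \;=\; G, \]
where the penultimate equality uses the left adequate identity $T^+ T = T$ in the free left adequate monoid.

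I do not anticipate any real obstacle here. All the heavy lifting has already been done in establishing that $\Phi$ is a $(2,1,0)$-morphism, which in turn rested on the preceding lemmas controlling how $\widetilde{\cdot}$ and $\overline{\cdot}$ interact with $\times$ and $(+)$ (Lemmas~\ref{lem:timeswithtildeandprune}, \ref{lem:pluswithtildeandprune}, \ref{lem:stacking} and \ref{lem:comptildebar}). The present lemma is really just the observation that the quasi-identity $x^+ x = x$, being preserved by $(2,1,0)$-morphisms, transfers from $\mathrm{FLAd}(X)$ to its surjective homomorphic image $\mathrm{PT}(C;X)$; the only thing to verify is that a representative tree $T$ exists, which is immediate from the definition of $\mathrm{PT}(C;X)$.
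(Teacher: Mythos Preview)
Your proof is correct and takes a genuinely different route from the paper's. The paper argues directly at the level of graphs: it unwinds the definitions of $G^+G$ using Lemmas~\ref{lem:timeswithtildeandprune}--\ref{lem:stacking} to reduce to $\overline{\widetilde{\overline{G^{(+)}\times G}}}$, and then observes geometrically that $\overline{G^{(+)}\times G} = G$ because the glued copy of $G^{(+)}$ retracts onto the copy of $G$. Your argument instead pulls back to $\mathrm{FLAd}(X)$ via the surjective $(2,1,0)$-morphism $\Phi$ of Theorem~\ref{thm:PhifromUAtoPA} and simply transports the identity $T^+T=T$ forward. This is shorter and more conceptual, and there is no circularity since Theorem~\ref{thm:PhifromUAtoPA} is already in hand. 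What the paper's approach buys is an explicit geometric reason for the identity (the visible retraction of $G^{(+)}$ onto $G$), whereas your approach makes clear that \emph{any} identity of left adequate monoids will automatically hold in $\mathrm{PT}(C;X)$ for the same reason---a point the paper only exploits implicitly via Corollary~\ref{cor:PAmonoid}.
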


\begin{proof}
    Clearly $\overline{\widetilde{G}} = G$. By definition of the operations and Lemma \ref{lem:stacking}, we have\[G^+G = \overline{\widetilde{\overline{\widetilde{G^{(+)}}}\times \overline{\widetilde{G}}}} = \overline{\widetilde{\overline{\overline{\widetilde{G^{(+)}}}\times \overline{\widetilde{G}}}}} = \overline{\widetilde{\overline{\widetilde{G^{(+)}}\times\widetilde{G}}}} = \overline{\widetilde{\widetilde{G^{(+)}}\times\widetilde{G}}} = \overline{\widetilde{G^{(+)}\times G}} = \overline{\widetilde{\overline{G^{(+)}\times G}}}.\]Moreover, $\overline{G^{(+)}\times G} = \overline{G} = G$, by retracting the copy of $G^{(+)}$ onto the copy of $G$. Hence we have $G^+G = \overline{\widetilde{G}} = G$ as required.
\end{proof}

\begin{lem}\label{lem:abundant}
    Let $G,H,T \in \mathrm{PT}(C;X)$. Then $GT = HT \iff GT^+ = HT^+$.
\end{lem}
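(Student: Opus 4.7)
The backward direction is immediate: right-multiplying $GT^+ = HT^+$ by $T$ and applying Lemma \ref{lem:x+xequalsx} yields $GT = HT$. For the forward direction, by Proposition \ref{prop:twosided} I may assume $C$ is two-sided cancellative, so that $C$-values of vertices and the Cayley graph embedding become available. Pick tree preimages $G',H',T' \in \FLAd(X)$ of $G,H,T$ under $\Phi$; by Theorem \ref{thm:PhifromUAtoPA} and Lemma \ref{lem:stacking}, $GT = \overline{\widetilde{G' \times T'}}$, $GT^+ = \overline{\widetilde{G' \times T'^{(+)}}}$, and analogously for $H$.

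The crucial observation is that $G' \times T'$ and $G' \times T'^{(+)}$ share the same underlying unrooted labelled digraph with the same start vertex, differing only in the end vertex (at $\omega(T')$ versus at the meeting point $\omega(G') = \alpha(T')$), and this persists through idempath identification. Writing $\Omega_G$ for the common underlying graph of $\widetilde{G' \times T'}$ and $\widetilde{G' \times T'^{(+)}}$, with distinguished vertices $\alpha_G$ (start), $e_G$ (full-end) and $v_G$ (meeting-point), and analogously $\Omega_H, \alpha_H, e_H, v_H$, the hypothesis reads $P := \overline{(\Omega_G, \alpha_G, e_G)} = \overline{(\Omega_H, \alpha_H, e_H)}$ and the goal is $\overline{(\Omega_G, \alpha_G, v_G)} = \overline{(\Omega_H, \alpha_H, v_H)}$. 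Fix retract morphisms $r_G \colon \Omega_G \to P$ and $r_H \colon \Omega_H \to P$ and set $\mu_G := r_G(v_G)$, $\mu_H := r_H(v_H)$.

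I claim $\mu_G = \mu_H$. Both lie on the image of the respective trunks, hence in the subgraph of $P$ spanned by directed paths from $\alpha_P$ to $\omega_P$. Their $C$-values are $[g]_C$ and $[h]_C$ where $g,h$ are the trunk labels of $G',H'$, and from $[gt]_C = [\omega_P]_C = [ht]_C$ together with right cancellativity in $C$ we obtain $[g]_C = [h]_C$. Proposition \ref{prop:cayleygraphchunks} applied to $P$ shows that this $\omega_P$-cone injects into the Cayley graph of $C$, so $\mu_G = \mu_H =: \mu$.

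To conclude, form the pushout $\Omega^* := \Omega_G \sqcup_P \Omega_H$ along the respective copies of $P$. Applying Lemma \ref{lem:retractfrommorph} via the morphism $\Omega^* \to \Omega_G$ which is the identity on $\Omega_G$ and equals $r_H$ followed by the inclusion $P \hookrightarrow \Omega_G$ on $\Omega_H$, one obtains $\overline{(\Omega^*, \alpha, v_G)} \cong \overline{(\Omega_G, \alpha_G, v_G)}$, and symmetrically $\overline{(\Omega^*, \alpha, v_H)} \cong \overline{(\Omega_H, \alpha_H, v_H)}$. The main obstacle is then the final reconciliation $\overline{(\Omega^*, \alpha, v_G)} = \overline{(\Omega^*, \alpha, v_H)}$: switching the end vertex between the two preimages of $\mu$ under the combined retract $\Omega^* \to P$ should not affect the core, and I expect this to follow by showing each core equals $\overline{(P, \alpha_P, \mu)}$, again invoking the rigidity imposed by Proposition \ref{prop:cayleygraphchunks}.
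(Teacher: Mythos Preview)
Your backward direction and setup are fine, and your argument that $\mu_G = \mu_H$ via $C$-values and Proposition~\ref{prop:cayleygraphchunks} is correct and useful. The gap is exactly where you flag it: the ``final reconciliation''. Your suggestion to show that each core equals $\overline{(P,\alpha_P,\mu)}$ by Lemma~\ref{lem:retractfrommorph} does not go through as stated, because that lemma requires the target to be a \emph{birooted} subgraph of the source, i.e.\ one sharing the same end vertex. Concretely, $(P,\alpha_P,\mu)$ is a birooted subgraph of $(\Omega_G,\alpha_G,v_G)$ only if $\mu = v_G$ in $\Omega_G$, that is, only if $r_G$ \emph{fixes} $v_G$. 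Proposition~\ref{prop:cayleygraphchunks} only tells you that vertices on a start-to-end path in $P$ with equal $C$-value coincide; it says nothing about $v_G$, which a priori need not lie in $P$ at all.

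This missing statement, $r_G(v_G)=v_G$, is precisely the heart of the paper's proof. The paper observes that $v_G$ and $r_G(v_G)$ have the same $C$-value (by Corollary~\ref{cor:tildekeepsCvalue}), so since $\Omega_G$ is already idempath-identified it suffices to exhibit a directed path between them in $\Omega_G$. That existence is argued by lifting a path in $\Omega_G$ from $r_G(v_G)$ to the end vertex back to $G'\times T'$: because $v_G$ is the unique point of contact between the $G'$- and $T'$-pieces, the lifted path must pass through $v_G$, and Lemma~\ref{lem:idempathsSCC} then yields the desired directed path. Once $r_G(v_G)=v_G$ (and dually $r_H(v_H)=v_H$) is established, your equality $\mu_G=\mu_H$ finishes the job immediately, since then both retract images are $(P,\alpha_P,\mu)$. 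So your pushout detour is unnecessary: the real work is showing the retraction fixes the amalgamated vertex, and nothing in your proposal addresses that.
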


\begin{proof}
By Proposition~\ref{prop:twosided}, we may assume without loss of generality that $C$ is two-sided cancellative.
    Certainly if $G,H,T$ are such that $GT^+ = HT^+$, then by associativity and Lemma \ref{lem:x+xequalsx}, \[GT = G(T^+T) = (GT^+)T = (HT^+)T = H(T^+T) = HT.\]
    
        Conversely, suppose $GT = HT$. By definition of the multiplication, we have that $\overline{\widetilde{G \times T}} = GT = HT = \overline{\widetilde{H \times T}}$. Let $\rho_G$ and $\rho_H$ be the retraction maps for $\widetilde{G \times T}$ and $\widetilde{H \times T}$ respectively, both with images isomorphic to $\overline{\widetilde{G \times T}} = GT = HT = \overline{\widetilde{H \times T}}$.
    
    By definition of unpruned multiplication, $G \times T^{(+)}$ has the same underlying graph as $G \times T$ but with relocated end vertex. It therefore follows that $\widetilde{G \times T^{(+)}}$ has the same underlying graph as $\widetilde{G \times T}$, with endpoint the image of the amalgamated vertex of $G$ and $T$.
 
 We claim that that the map $\rho_G$ also defines a retract on $\widetilde{G \times T^{(+)}}$. Clearly, it is an idempotent morphism of labelled graphs and fixes
the start vertex. To establish the claim we need only to show that it fixes the end vertex, that is, the image of the amalgamated vertex of $G$ and $T$. Let $g$ be the end vertex, and suppose for a contradiction that $\rho_G(g) = h \neq k$. Let $g'$ be the amalgamated vertex and choose a pre-image $k'$ of $k$ in $G \times T^{(+)}$.
By Corollary~\ref{cor:tildekeepsCvalue}, we have $[g']_C = [k']_C$. Also, by Corollary~\ref{cor:tildekeepsCvalue}, if there was a directed path (in either direction) between $g'$ and $k'$ that path would have to be an idempath,
which would mean $g'$ and $k'$ were identified in $\widetilde{G \times T^{(+)}}$, contradicting the assumption that $g \neq k$. Hence, it will suffice to show that there
is such a directed path.

If $k'$ comes from the embedded copy of $T^{(+)}$ then then there is clearly a directed path from $g'$ to $k'$. So assume $k'$ comes from the embedded copy of $G$.
Let $e$ denote the endpoint of $T$, and choose a directed path from $g'$ to $e$ in $G \times T^{(+)}$, and let $p$ be the label of this path. Then there is also a
path labelled $p$ in $\widetilde{G \times T^{(+)}}$ from $g$ to the image of $e$. Since $\rho_G$ is a morphism, fixes $e$ and maps
$g$ to $k$, it follows that must also be a path in $\widetilde{G \times T^{(+)}}$ from $k$ to $e$ labelled $p$.

For this to exist, $G \times T^{(+)}$ must have an undirected path $w$ from $k'$ to $e$ of the form $x_1 y_2 x_2 y_2 \dots x_n y_n$ where each $x_i$ is a directed path traversed positively, and each $y_i$ is a constructable semiwalk (and $p$ is the label of $x_1 \cdots x_n$). This path must pass through $g'$, since $k'$ is in G, $e$ is in T, and $g'$ is the sole point of connection between these subgraphs. Now consider the image of this path in $\widetilde{G \times T^{(+)}}$. Clearly $x_1 x_2 \dots x_n$ becomes a directed path, and by Lemma \ref{lem:idempathsSCC}, every vertex traversed by the $y_i$’s lies in the same strongly connected component as a vertex of this directed path. It follows that every vertex on the path is at the end of a directed path starting at $k'$.
 
In particular, there is a directed path in $\widetilde{G \times T^{(+)}}$ from $k'$ to $g'$, as required.
This completes the proof of the claim that $\rho_G$ defines a retract on $\widetilde{G \times T^{(+)}}$.

A dual argument shows that $\rho_H$ defines a retract on $\widetilde{H \times T^{(+)}}$, and has image $HT = GT$ but with end vertex at the start of $T$. Therefore by confluence of retracts and Lemma \ref{lem:stacking}, we have that \[GT^+ = \overline{\widetilde{\overline{G \times T^{(+)}}}} = \overline{\widetilde{\overline{\rho_G\left(G \times T^{(+)}\right)}}} = \overline{\widetilde{\overline{\rho_H\left(H \times T^{(+)}\right)}}} = \overline{\widetilde{\overline{H \times T^{(+)}}}} = HT^+\]as required.
\end{proof}

\begin{lem}\label{lem:idemsareplus}
    Let $G \in \mathrm{PT}(C;X)$. Then $G$ is idempotent if and only if $G^+ = G$.
\end{lem}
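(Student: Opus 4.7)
The plan is to prove the two implications separately. The reverse implication is immediate: assuming $G^+ = G$, Lemma \ref{lem:x+xequalsx} gives $GG = G^+ G = G$, so $G$ is idempotent. (Equivalently, Corollary \ref{cor:PAmonoid} yields $G^+ G^+ = G^+$ directly.)

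For the forward direction, suppose $GG = G$, i.e., $\overline{\widetilde{G \times G}} \cong G$ as birooted $X$-graphs. The strategy is to establish $\alpha(G) = \omega(G)$ in $G$, for then the underlying data of $G^{(+)}$ and $G$ coincide and $G^+ = \overline{\widetilde{G^{(+)}}} = \overline{\widetilde{G}} = G$. By Proposition \ref{prop:twosided} I may replace $C$ with a two-sided cancellative monoid in which the same words represent $1$, so Lemma \ref{lem:pathsintilde} and Corollary \ref{cor:tildekeepsCvalue} become available. Choosing any tree $T$ with $G = \overline{\widetilde{T}}$, I would then argue that $\widetilde{G \times G}$ embeds as a birooted subgraph of $\widetilde{T \times T} = \widetilde{\widetilde{T} \times \widetilde{T}}$, which is itself a descendant of the tree $T \times T$. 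Since the coterminal-paths-have-equal-$C$-value conclusion of Lemma \ref{lem:pathsintilde} passes to subgraphs, this endows every vertex of $\widetilde{G \times G}$ with a well-defined $C$-value.

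With this machinery in place, let $w$ label any path from $\alpha(G)$ to $\omega(G)$ in $G$, and set $c := [w]_C$. On one hand, the concatenated path $\alpha_1 \to \omega_1 = \alpha_2 \to \omega_2$ in $\widetilde{G \times G}$ is labelled $ww$, forcing $[\omega_2]_C = c^2$. On the other hand, the isomorphism $G \cong \overline{\widetilde{G \times G}}$ must preserve start and end, so it transports a $w$-labelled path from $\alpha(G)$ to $\omega(G)$ into a $w$-labelled path from $\alpha_1$ to $\omega_2$ inside $\widetilde{G \times G}$, giving $[\omega_2]_C = c$. Consistency yields $c^2 = c$ in $C$, and right cancellativity in $C$ forces $c = 1$. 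Hence the chosen path in $G$ is a $C$-idempath from $\alpha(G)$ to $\omega(G)$, and because $G$ is already idempath-identified, its endpoints must already coincide.

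The main obstacle I foresee is the embedding step: justifying that $\widetilde{G \times G}$ really sits inside $\widetilde{T \times T}$ as a birooted subgraph, equivalently that no two vertices of $G \times G$ are identified by the $\sim$-operation of $\widetilde{T \times T}$ except via an idempath already present in $G \times G$. The natural argument uses the retraction $\pi \colon \widetilde{T} \to \widetilde{T}$ onto $G$: any idempath in $\widetilde{T} \times \widetilde{T}$ between two vertices of $G \times G$ can be pushed through $\pi \times \pi$ to a path in $G \times G$ carrying the same label, which is therefore also an idempath. Everything past this reduction is a short computation with $C$-values.
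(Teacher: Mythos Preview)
Your proof is correct and follows the same underlying idea as the paper: show that any path from $\alpha(G)$ to $\omega(G)$ has label representing $1$ in $C$ by comparing a $w$-labelled path with a $w^2$-labelled one, then use right cancellativity and the fact that $G$ is idempath-identified.

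However, the detour through $\widetilde{G\times G}$ and its embedding into $\widetilde{T\times T}$ is unnecessary, and the ``main obstacle'' you flag does not arise in the paper's argument. The paper simply observes that the $w^2$-labelled path in $G\times G$ survives (as a path from start to end) under the graph morphisms $\sim$ and retraction, hence lives in $GG$; since $GG\cong G$ as birooted $X$-graphs, this gives a $w^2$-labelled path from $\alpha(G)$ to $\omega(G)$ \emph{inside $G$ itself}. Now $G=\overline{\widetilde{T}}$ is already a retract of a descendant of a tree, so Lemma~\ref{lem:pathsintilde} applies to $G$ directly, yielding $[w]_C=[w^2]_C$ without ever needing to discuss $C$-values on $\widetilde{G\times G}$ or to verify any embedding. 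Your embedding argument (essentially reproducing the $\vartheta$-construction from Lemma~\ref{lem:stacking}, extended to an arbitrary retract rather than the core) is valid, but it buys you nothing over the direct route.
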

    
\begin{proof}
    The right-to-left implication follows immediately from Corollary \ref{cor:PAmonoid}.

    Suppose $G \in \mathrm{PT}(C;X)$ has $GG = G$. Consider any path in $G$ from $\alpha(G)$ to $\omega(G)$, say labelled $w \in X^*$. Since $\sim$ and retraction are graph morphisms, in the graph $GG$ there exists a path from $\alpha(GG)$ to $\omega(GG)$ labelled $w^2$. Since $G$ is idempotent, $GG = G$ and thus there exists a path from $\alpha(G)$ to $\omega(G)$ labelled $w^2$. By Lemma \ref{lem:pathsintilde}, $[w]_C = [w^2]_C$. Hence $[w]_C = 1$ by right-cancellativity. Since $G$ is idempath identified, it follows that the path labelled $w$ in $G$ is a cycle, i.e. that $G$ has $\alpha(G) = \omega(G)$. In particular, $G = G^+$.
\end{proof}

\begin{cor}
    $\mathrm{PT}(C;X)$ is an $X$-generated left adequate monoid.
\end{cor}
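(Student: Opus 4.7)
The plan is to verify directly the three conditions for left adequacy: that $\mathrm{PT}(C;X)$ is a monoid, that its idempotents commute, and that every $\mathcal{R}^*$-class contains an idempotent. All the hard work has already been done in the preceding lemmas, so this corollary is essentially a bookkeeping argument.

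First, Corollary \ref{cor:PAmonoid} already establishes that $\mathrm{PT}(C;X)$ is an $X$-generated associative structure; the trivial single-vertex, edge-free graph acts as a two-sided identity, so $\mathrm{PT}(C;X)$ is indeed a monoid. Next, for commutativity of idempotents: by Lemma \ref{lem:idemsareplus}, every idempotent of $\mathrm{PT}(C;X)$ is of the form $G^+$ for some $G$, and hence lies in the image of the $+$ operation. Since Corollary \ref{cor:PAmonoid} tells us that the subsemigroup generated by the image of $+$ is a semilattice, any two idempotents commute.

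Finally, I would show left abundance. Given $G \in \mathrm{PT}(C;X)$, the element $G^+$ is idempotent: since $+$ is an idempotent operation (Corollary \ref{cor:PAmonoid}), $(G^+)^+ = G^+$, and then Lemma \ref{lem:idemsareplus} applied to $G^+$ gives $(G^+)(G^+) = G^+$. It remains to check that $G \mathrel{\mathcal{R}^*} G^+$, i.e., that for every $H_1, H_2 \in \mathrm{PT}(C;X)$,
\[
H_1 G = H_2 G \iff H_1 G^+ = H_2 G^+.
\]
But this is precisely Lemma \ref{lem:abundant} applied with $T = G$. Hence every element of $\mathrm{PT}(C;X)$ is $\mathcal{R}^*$-related to an idempotent, completing the verification.

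Since there is no genuine obstacle at this stage, I would simply present these three points in sequence with pointers back to the lemmas of the section, perhaps noting (for emphasis, rather than necessity) that $+$ then acts as the unique operation selecting the idempotent within each $\mathcal{R}^*$-class.
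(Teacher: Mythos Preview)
Your proposal is correct and follows essentially the same approach as the paper: both arguments use Corollary~\ref{cor:PAmonoid} and Lemma~\ref{lem:idemsareplus} to handle the monoid structure and commutativity of idempotents, then invoke Lemma~\ref{lem:abundant} to obtain $G \mathrel{\mathcal{R}^*} G^+$ with $G^+$ idempotent. Your write-up simply unpacks more explicitly the steps that the paper compresses into a single sentence.
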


\begin{proof}
    By Corollary \ref{cor:PAmonoid} and Lemma \ref{lem:idemsareplus}, is is enough to show that every element $T \in \mathrm{PT}(C;X)$ is $\mathcal{R}^*$-related to some idempotent. Indeed by Lemma \ref{lem:abundant}, $T \mathcal{R}^* T^+$ and by Lemma \ref{lem:idemsareplus}, $T^+$ is idempotent.
\end{proof}

We call $\mathrm{PT}(C;X)$ the \textit{pretzel monoid} of $C$ with respect to $X$. 

\begin{example}\label{ex:ptz2}
    Consider the cyclic group $C = C_2 = \mathrm{Mon}\langle x\ |\ x^2 = 1 \rangle$ of order $2$. Consider any tree $T \in \textrm{FLAd}(\{x\})$ and any branch of length at least $2$. Since all edges are labelled $x$, the branch will contain an idempath labelled by $x^2$ and thus will admit a non-trivial idempath identification. By considering all possibilities, one may see that the pretzel monoid $\mathrm{PT}(C_2;\{x\})$ consists of exactly $5$ graphs, namely the ones shown in Figure \ref{fig:ptz2}; that is any tree $T \in \textrm{FLAd}(\{x\})$ has its pretzel $\overline{\widetilde{T}}$ appearing in Figure \ref{fig:ptz2}.

    \begin{figure}[ht]
    \centering
        \begin{tikzpicture}
            \GraphInit[vstyle=Empty]
            \SetVertexSimple[MinSize = 1pt]
            \SetUpEdge[lw = 0.5pt]
            \tikzset{EdgeStyle/.style={->-}}
            \tikzset{VertexStyle/.append style = {minimum size = 3pt, inner sep = 0pt}}
            \SetVertexNoLabel
            \SetGraphUnit{2}

            \node (O) at ( 0,0) {};
            \node (A) at ( 0,1) {\large$\PlusCross$};

        \end{tikzpicture}
        \hspace{0.05\textwidth}
        \begin{tikzpicture}
            \GraphInit[vstyle=Empty]
            \SetVertexSimple[MinSize = 1pt]
            \SetUpEdge[lw = 0.5pt]
            \tikzset{EdgeStyle/.style={->-}}
            \tikzset{VertexStyle/.append style = {minimum size = 3pt, inner sep = 0pt}}
            \SetVertexNoLabel
            \SetGraphUnit{2}
            
            \node (A) at ( 0,0) {\large$+$};
            \node (B) at ( 1.5,2) {\large$\times$}; 
            
            \Edge(A)(B)\node (l) at ( 0.5,1.2) {$x$};
            
        \end{tikzpicture}
        \hspace{0.05\textwidth}
        \begin{tikzpicture}
            \GraphInit[vstyle=Empty]
            \SetVertexSimple[MinSize = 1pt]
            \SetUpEdge[lw = 0.5pt]
            \tikzset{EdgeStyle/.style={->-}}
            \tikzset{VertexStyle/.append style = {minimum size = 3pt, inner sep = 0pt}}
            \SetVertexNoLabel
            \SetGraphUnit{2}
            
            \node (A) at ( 0,0) {\large$\PlusCross$}; 
            \Vertex[x=1.5,y=2]{B}
            
            \Edge[style={bend left}](A)(B)\node (l) at ( 0.5,1.7) {$x$};
            \Edge[style={bend left}](B)(A)\node (l) at ( 1,0.3) {$x$};
            
        \end{tikzpicture}
        \hspace{0.05\textwidth}
        \begin{tikzpicture}
            \GraphInit[vstyle=Empty]
            \SetVertexSimple[MinSize = 1pt]
            \SetUpEdge[lw = 0.5pt]
            \tikzset{EdgeStyle/.style={->-}}
            \tikzset{VertexStyle/.append style = {minimum size = 3pt, inner sep = 0pt}}
            \SetVertexNoLabel
            \SetGraphUnit{2}
            
            \node (A) at ( 0,0) {\large$+$};
            \node (B) at ( 1.5,2) {\large$\times$}; 
            
            \Edge[style={bend left}](A)(B)\node (l) at ( 0.5,1.7) {$x$};
            \Edge[style={bend left}](B)(A)\node (l) at ( 1,0.3) {$x$};
            
        \end{tikzpicture}
        \hspace{0.05\textwidth}
        \begin{tikzpicture}
            \GraphInit[vstyle=Empty]
            \SetVertexSimple[MinSize = 1pt]
            \SetUpEdge[lw = 0.5pt]
            \tikzset{EdgeStyle/.style={->-}}
            \tikzset{VertexStyle/.append style = {minimum size = 3pt, inner sep = 0pt}}
            \SetVertexNoLabel
            \SetGraphUnit{2}
            
            \node (A) at ( 0,0) {\large$\PlusCross$};
            \Vertex[x=1.5,y=2]{B}
            
            \Edge(A)(B)\node (l) at ( 0.5,1.2) {$x$};
            
        \end{tikzpicture}
        \caption{The $5$ graphs in the pretzel monoid $\textrm{PT}(C_2;\left\{x\right\})$. From left-to-right, the pretzels are images of (e.g.) the trees $1,x,x^2,x^3$ and $x^+ \in \mathrm{FLAd}(x)$.}
        \label{fig:ptz2}
        \end{figure}
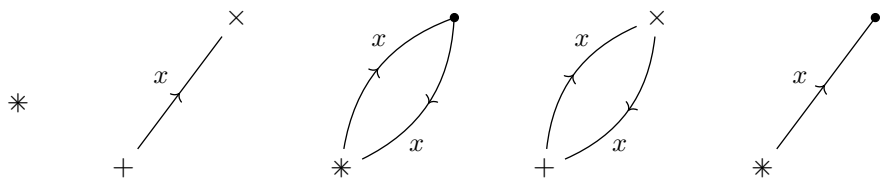

\end{example}

\section{A Presentation}\label{sec:presentation}

The realisation of pretzel monoids described in Section~\ref{sec:pretzel} provides a family of previously unknown geometrically described left adequate monoids. We encourage the reader to choose their favourite right cancellative monoid (or even group), and consider its pretzel monoid with a given generating set. Even small choices for $C$ produce interesting left adequate monoids. Recall Example \ref{ex:ptz2}: $\mathrm{PT}(C_2;x)$ is a $5$ elements left adequate monoid (in fact it is \textit{left ample} \cite{gould:rightcanc}) which is not inverse. One may also verify that $\textrm{PT}(C_3;\left\{x\right\})$ has $10$ elements, and is left adequate, but neither left ample nor inverse.

Moreover, one may compute the multiplication table and verify that $\textrm{PT}(C_2;\left\{x\right\})$ has a presentation given by \[\textrm{PT}(C_2;\left\{x\right\}) \cong \textrm{LAd}\left\langle x\ |\ (x^2)^+ = x^2\right\rangle.\]
We shall show, more generally, that for any $X$-generated right cancellative monoid $C$, the pretzel monoid $\textrm{PT}(C;X)$ is always isomorphic to:
\[\mathcal{M}(C;X) = \textrm{LAd}\left\langle X\ |\ w^+ = w \textrm{ for } w \in X^* \textrm{ such that } [w]_C = 1\right\rangle.\] By Corollary~\ref{cor:twosided}, 
we may replace $C$ with an $X$-generated two-sided cancellative monoid without changing set of words which represent the identity, and hence without changing
either $\textrm{PT}(C;X)$ and $\mathcal{M}(C;X)$.

Assume then, that $C$ is an $X$-generated two-sided cancellative monoid. For ease of notation, we write $\mathcal{M}$ for $\mathcal{M}(C;X)$. To show that $\textrm{PT}(C;X)$ and $\mathcal{M}$ represent the same monoid, our proof strategy is to show that both monoids $\textrm{PT}(C;X)$ and $\mathcal{M}$ are quotients of each other - one of these directions is easy.

\begin{thm}\label{thm:quotienteasy}
    The pretzel monoid $\textrm{PT}(C;X)$ is a quotient of $\mathcal{M}$.
\end{thm}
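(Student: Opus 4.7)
The plan is to use the universal property of the presentation defining $\mathcal{M}(C;X)$: a surjective $(2,1,0)$-morphism $\mathcal{M} \to \mathrm{PT}(C;X)$ exists precisely when $\mathrm{PT}(C;X)$ is an $X$-generated left adequate monoid satisfying the defining relations $w^+ = w$ for every $w \in X^*$ with $[w]_C = 1$. The $X$-generation is already provided by Corollary~\ref{cor:PAmonoid}, so the entire content of the theorem reduces to checking these relations geometrically inside $\mathrm{PT}(C;X)$.

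For a word $w = x_1 \cdots x_k \in X^*$ with $[w]_C = 1$, I would consider the linear path tree $T_w$ in $\mathrm{FLAd}(X)$ whose trunk is labelled by $w$ and which has no branches. Its image under the surjection $\Phi \colon \mathrm{FLAd}(X) \to \mathrm{PT}(C;X)$ from Theorem~\ref{thm:PhifromUAtoPA} is $\overline{\widetilde{T_w}}$. Since the trunk of $T_w$ is itself an idempath from $\alpha(T_w)$ to $\omega(T_w)$, a single idempath identification merges these two vertices, so $\widetilde{T_w}$ has $\alpha = \omega$. Retracting cannot separate distinguished vertices, so $[w]_{\mathrm{PT}} = \overline{\widetilde{T_w}}$ is a graph whose start and end vertex coincide.

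For any graph $G \in \mathrm{PT}(C;X)$ with $\alpha(G) = \omega(G)$, the operation $(+)$ is trivial on the underlying graph: $G^{(+)} = G$, so $G^+ = \overline{\widetilde{G^{(+)}}} = \overline{\widetilde{G}} = G$ (using that $G$ is already in $\mathrm{PT}(C;X)$ and hence idempath identified and retracted). Applying this to $G = [w]_{\mathrm{PT}}$ yields $[w]^+_{\mathrm{PT}} = [w]_{\mathrm{PT}}$ as required. Therefore the natural surjection $\Phi \colon \mathrm{FLAd}(X) \to \mathrm{PT}(C;X)$ descends to a well-defined surjective $(2,1,0)$-morphism $\mathcal{M}(C;X) \to \mathrm{PT}(C;X)$, exhibiting $\mathrm{PT}(C;X)$ as a quotient of $\mathcal{M}$. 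There is no serious obstacle here; the only subtlety is keeping track of the distinction between the tree $T_w$ in $\mathrm{FLAd}(X)$ and its pretzel image, and noting that $(+)$ acts trivially precisely when the two roots have already been fused by $\sim$.
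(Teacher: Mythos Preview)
Your proof is correct and follows essentially the same approach as the paper's: both arguments verify the defining relations $w^+ = w$ geometrically by observing that the trunk of the linear tree $T_w$ is an idempath, so idempath identification fuses its two roots. The only cosmetic difference is packaging: the paper compares the trees $w$ and $w^+$ directly in $\mathrm{FLAd}(X)$ and notes they have a common idempath-identified descendant (the cycle labelled $w$), whereas you first pass to the pretzel $[w]_{\mathrm{PT}}$, observe $\alpha = \omega$ there, and then argue that $(+)$ acts trivially on such graphs.
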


\begin{proof}
    Let $w \in X^*$ such that $[w]_C = 1$. As a tree in $\mathrm{FLAd}(X)$, $w$ is a single idempath, and $w^+$ is the same tree with identified start and end vertex. There are valid idempath identifications on both $w$ and on $w^+$ identifying the start and end of the path labelled $w$. Both of these identifications result in a graph consisting of a cycle labelled $w$ with identified start and end vertex and the start of $w$. Hence $w$ and $w^+$ have a common descendant, and thus $\widetilde{w} = \widetilde{w^+}$. Thus $[w]_{\mathrm{PT}(C;X)} = [w^+]_{\mathrm{PT}(C;X)}$.

    Since $\textrm{PT}(C;X)$ is also left adequate, it follows that $\textrm{PT}(C;X)$ satisfies all defining relations of $\mathcal{M}$. Thus $\textrm{PT}(C;X)$ is a quotient of $\mathcal{M}$.
\end{proof}

The remainder of this section is devoted to showing that $\mathcal{M}$ is a quotient of $\textrm{PT}(C;X)$. Our goal, therefore, is to show that if two $X$-trees $S$ and $T$ have $\overline{\widetilde{S}} = \overline{\widetilde{T}}$, then $[S]_\mathcal{M} = [T]_\mathcal{M}$.

Recall that Corollary \ref{cor:movethru} allows us to modify $X$-trees without changing their value in $\mathcal{M}$ by copying cones of vertices from one end of an idempath to the other. Notice that an idempath is a $1$-constructable semiwalk; one may hope to be able to copy cones through $n$-constructable semiwalks for any $n \geq 1$ without changing the value in $\mathcal{M}$. Indeed we are able to do so.

Given vertices $u,v$ of an $X$-tree $T$, define $T|^{u\to v}$ to be the $X$-tree obtained as follows. The underlying tree is obtained taking the disjoint $X$-tree $T$ and an extra copy of the subtree $\mathrm{Cone}_T(u)$, and identifying the vertex $u$ in the copied $\mathrm{Cone}_T(u)$ with the vertex $v$ in $T$. Informally, we `glue' a copy of $\mathrm{Cone}_T(u)$ to $v$. Define $\phi$ to be the copying map from $\mathrm{Cone}_T(u)$ to the obvious subtree of $\mathrm{Cone}_{T|^{u \to v}}(v)$. We identify $T$ with its natural image in the tree in $T|^{u\to v}$. We make $T|^{u\to v}$ an $X$-tree by defining $\alpha(T|^{u\to v})$ to be $\alpha(T)$, and

\[ \omega\left(T|^{u\to v}\right) = \begin{cases}
    \omega(T) & \textrm{if } \omega(T) \notin \mathrm{Cone}_T(u)\\
    \omega(\phi(\mathrm{Cone}_T(u))) & \textrm{if } \omega(T) \in \mathrm{Cone}_T(u)
    \end{cases}\]That is, the endpoint is copied along with the cone if possible.

\begin{example}
    Figure \ref{fig:copy} shows an $\{x,y\}$-tree $T$ with vertices $\mathbf{u},\mathbf{v}$ and $T|^{\mathbf{u}\to \mathbf{v}}$.
    \begin{figure}[ht]
    \centering
        {
        \begin{tikzpicture}
            \GraphInit[vstyle=Empty]
            \SetVertexSimple[MinSize = 1pt]
            \SetUpEdge[lw = 0.5pt]
            \tikzset{EdgeStyle/.style={->-}}
            \tikzset{VertexStyle/.append style = {minimum size = 3pt, inner sep = 0pt}}
            \SetVertexNoLabel
            \SetGraphUnit{2}
            
            \node (A) at ( 0,0) {\large$+$};
            \Vertex[x=0,y=1]{U};
            \Vertex[x=1,y=2]{R1}\node (l) at ( 1.3,2) {$\mathbf{u}$};
            \node (R2) at ( 1,3) {\large$\times$};

            \Vertex[x=-1,y=2]{L1}\node (l) at ( -1.2,1.8) {$\mathbf{v}$};
            \Vertex[x=-2,y=3]{L2};
    
            \Vertex[x=0,y=3]{LR1};

            \Edge(A)(U)\draw (A) -- (U) node [midway, left=2pt] {$x$};
            \Edge(U)(R1)\draw (U) -- (R1) node [midway, right=2pt,below=2pt] {$x$};
            \Edge(R1)(R2)\draw (R1) -- (R2) node [midway, left=2pt] {$y$};
            \Edge(U)(L1)\draw (U) -- (L1) node [midway, left=2pt,below=2pt] {$x$};
            \Edge(L1)(LR1)\draw (L1) -- (LR1) node [midway, right=2pt,below=2pt] {$y$};
            \Edge(L1)(L2)\draw (L1) -- (L2) node [midway, left=2pt,below=2pt] {$x$};
    
        \end{tikzpicture}
        }\hspace{0.05\textwidth}
        {
        \begin{tikzpicture}
            \GraphInit[vstyle=Empty]
            \SetVertexSimple[MinSize = 1pt]
            \SetUpEdge[lw = 0.5pt]
            \tikzset{EdgeStyle/.style={->-}}
            \tikzset{VertexStyle/.append style = {minimum size = 3pt, inner sep = 0pt}}
            \SetVertexNoLabel
            \SetGraphUnit{2}
            
            \node (A) at ( 0,0) {\large$+$};
            \Vertex[x=0,y=1]{U};
            \Vertex[x=1,y=2]{R1};
            \Vertex[x=1,y=3]{R2};
            \node (new) at ( -1,3.5) {\large$\times$}; 
            \Vertex[x=-1,y=2]{L1};
            \Vertex[x=-2,y=3]{L2};    
            \Vertex[x=0,y=3]{LR1};
    
            \Edge(A)(U)\draw (A) -- (U) node [midway, left=2pt] {$x$};
            \Edge(U)(R1)\draw (U) -- (R1) node [midway, right=2pt,below=2pt] {$x$};
            \Edge(R1)(R2)\draw (R1) -- (R2) node [midway, left=2pt] {$y$};
            \Edge(U)(L1)\draw (U) -- (L1) node [midway, left=2pt,below=2pt] {$x$};
            \Edge(L1)(LR1)\draw (L1) -- (LR1) node [midway, right=2pt,below=2pt] {$y$};
            \Edge(L1)(L2)\draw (L1) -- (L2) node [midway, left=2pt,below=2pt] {$x$};
            \Edge(L1)(new)\draw (L1) -- (new) node [midway, right=5pt,above=1pt] {$y$};
    
        \end{tikzpicture}
        }
    \caption{An $\{x,y\}$-tree $T$ and the tree $T|^{\mathbf{u}\to \mathbf{v}}$.}
    \label{fig:copy}
    \end{figure}
\end{example}

\begin{thm}\label{thm:copy}
    Let $C$ be an $X$-generated special right cancellative monoid. Let $T$ be an $X$-tree and suppose $u$ and $v$ are vertices of $T$ which are identified in $\widetilde{T}$. Then $[T]_\mathcal{M} = [T|^{u\to v}]_\mathcal{M}$.
\end{thm}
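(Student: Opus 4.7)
Since $C$ is special right cancellative, by Theorem \ref{thm:specialrightiscanc} it is two-sided cancellative, so the results of the previous section (in particular Lemma \ref{lem:standard}) apply. By Theorem \ref{thm:whenidentify} and Lemma \ref{lem:standard}, $u$ and $v$ are joined in $T$ by a standard $n$-constructable semiwalk $\mathfrak{s}$ for some $n \geq 0$. The plan is to induct on $n$.

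The base cases are: $n = 0$, where $u = v$ and the duplicated copy of $\mathrm{Cone}_T(u)$ in $T|^{u \to u}$ can be retracted onto the original (or vice versa, if the endpoint lies in the copy) so that $T|^{u \to u} = T$ already in $\mathrm{FLAd}(X)$; and $n = 1$, where $\mathfrak{s}$ is a single forward or reverse idempath between $u$ and $v$, and Corollary \ref{cor:movethru} applied with $V = \mathrm{Cone}_T(u)$ gives the conclusion directly.

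For the inductive step with $n \geq 2$, Lemma \ref{lem:standard} tells us that the last inserted idempotent in the $n$-construction of the label $\ell$ of $\mathfrak{s}$ corresponds to an idempath $\pi \colon t \to t'$ in $T$ whose initial vertex $t$ is the root-most vertex visited by $\mathfrak{s}$. Thus $\ell = \ell_1 p^{\pm 1} \ell_2$ with $\ell_1 \ell_2$ being $(n-1)$-constructable. I propose to apply Corollary \ref{cor:movethru} via $\pi$ to copy an appropriate cone across $\pi$, producing an $\mathcal{M}$-equivalent auxiliary tree $T^*$ in which the segment of $\mathfrak{s}$ involving $\pi$ can be ``shortcut'' through the newly attached copy. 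The resulting semiwalk in $T^*$ from $u$ to an image $v^*$ of $v$ will have the $(n-1)$-constructable label $\ell_1 \ell_2$. Applying the inductive hypothesis in $T^*$ then gives $[T^*|^{u \to v^*}]_\mathcal{M} = [T^*]_\mathcal{M} = [T]_\mathcal{M}$, and a final application of Corollary \ref{cor:movethru} (or a direct retraction argument) identifies $T^*|^{u \to v^*}$ with $T|^{u \to v}$ in $\mathcal{M}$.

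The main obstacle is the case analysis: the insertion of $\pi$ may be positive or negative, and the sub-semiwalks before and after $\pi$ may exit the cone of $t'$ (though they never rise above $t$). In each subcase the choice of which cone to copy---$\mathrm{Cone}_T(t')$ to $t$, $\mathrm{Cone}_T(t)$ to $t'$, or some intermediate subtree---must be made so that the shortcut semiwalk is well-defined in $T^*$, and one must verify that its label genuinely inherits an $(n-1)$-construction from that of $\ell_1 \ell_2$ (rather than silently acquiring additional complexity). Matching the position of $\omega(T)$ in the final comparison between $T^*|^{u \to v^*}$ and $T|^{u \to v}$ is an additional bookkeeping task.
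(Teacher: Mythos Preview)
Your proposal is correct and follows essentially the same route as the paper: induction on the constructability index $n$, with the inductive step driven by Lemma~\ref{lem:standard} to locate an idempath at the root-most vertex $t$, then an application of Corollary~\ref{cor:movethru} along that idempath to produce an auxiliary tree in which the $(n-1)$-constructable label $\ell_1\ell_2$ is realised as a semiwalk, allowing the inductive hypothesis to fire. The paper carries out precisely the four-case analysis you anticipate (orientation of the idempath crossed with whether $\omega(T)\in\mathrm{Cone}_T(u)$), and its specific device for the auxiliary tree is to first form $T|^{t\to t}$ (duplicating all of $\mathrm{Cone}_T(t)$) and then \emph{move} one copy through the idempath, which makes the endpoint bookkeeping and the final retraction back to $T|^{u\to v}$ cleaner than a bare copy would.
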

\begin{proof}
If $u$ and $v$ are the same vertex in $T$, then $[T]_{\mathrm{FLAd}(X)} = [T|^{u\to v}]_{\mathrm{FLAd}(X)}$ so certainly the result holds. Now suppose $u$ and $v$ are distinct vertices of $T$. By Theorem \ref{thm:whenidentify}, there exists an $n$-constructable semiwalk $\mathfrak{s}$ on $T$ from $u$ to $v$ for some $n \geq 1$. If there exists a directed path from $u$ to $v$ or from $v$ to $u$, then this path is an idempath by Lemma \ref{lem:pathsintilde}. Hence the result follows directly from Corollary \ref{cor:movethru}.

We show our result by induction on $n$. If $n=1$, then $\mathfrak{s}$ is a directed path between $u$ and $v$ and the result follows as above. Assume now that the result holds for $K$-constructable semiwalks for some integer $K \geq 1$. Suppose that $\mathfrak{s}$ is $(K+1)$-constructable, and moreover that there is no directed path from $u$ to $v$ or from $v$ to $u$ in $T$.

By Corollary \ref{cor:twosided} and Lemma \ref{lem:standard}, we may assume that $\mathfrak{s}$ is a standard semiwalk. Let $t$ denote the lowest vertex of the tree traversed by $\mathfrak{s}$. Note that $t \notin \{u,v\}$, as there exists paths from $t$ to $u$ and $t$ to $v$. Decompose $\mathfrak{s} = p\mathfrak{w}q$ into semiwalks $p,\mathfrak{w},q$, where $\lambda(\mathfrak{w}) \in X^* \cup (X^{-1})^*$ is a final inserted word in a $(K+1)$-construction of $\lambda(\mathfrak{s})$ which corresponds to an idempath $w$ in $T$ with $\alpha(w)=t$. Note that $\mathfrak{w}$ may traverse $w$ either positively or negatively. Moreover, note that $\lambda(p)\lambda(q)$ is a $K$-constructable word.

Denote $\mathrm{Cone}_T(t)$ by $A$. Form the tree $T|^{t \to t}$, and denote by $A'$ the newly added copy of $A$. Since there exists a morphism of $T|^{t \to t}$ to $T$ and $T$ is realisable as a birooted subtree, we have $[T]_{\mathrm{FLAd}(X)} = [T|^{t \to t}]_{\mathrm{FLAd}(X)}$ by Lemma \ref{lem:retractfrommorph}.

Define the map $\phi:A \to A'$ to be the obvious isomorphism of directed graphs. Since $A$ contains the vertices $u$ and $v$, the path $w$ and the semiwalks $p$ and $q$, $A'$ contains vertices $\phi(u)$ and $\phi(v)$, the path $\phi(w)$, and semiwalks $\phi(p)$ and $\phi(q)$. Moreover, $\mathrm{Cone}_{T|^{t \to t}}(\phi(u)) \cong \mathrm{Cone}_T(u)$. Figure \ref{fig:TandTprime} shows the general case of $T$ and $T|^{t \to t}$, with subtrees $A$ and $A'$ denoted. In both cases, the endpoint of $T|^{t \to t}$ may or may not be located in $A'$, but it is certainly not located in $A$.

\begin{figure}[ht]
    \centering
    \resizebox{0.8\textwidth}{!}{%
    \begin{circuitikz}
        \tikzstyle{every node}=[font=\Huge]
        \draw [ fill={rgb,255:red,0; green,0; blue,0} ] (10,14.25) circle (0.25cm);
        \draw [ fill={rgb,255:red,0; green,0; blue,0} ] (10,18) circle (0.25cm);
        \draw [->, >=Stealth, line width=1pt] (10,14.5) -- (10,17.75)node[pos=0.5, fill=white]{$w$};
        \draw [ fill={rgb,255:red,0; green,0; blue,0} ] (10,21.75) circle (0.25cm);
        \draw [ fill={rgb,255:red,0; green,0; blue,0} ] (6.25,18) circle (0.25cm);
        \draw [->, >=Stealth, dashed, line width=1pt] (6.25,18) -- (9.75,14.5)node[pos=0.5, fill=white]{$p$};
        \draw [->, >=Stealth, dashed, line width=1pt] (10,18) -- (10,21.5)node[pos=0.5, fill=white]{$q$};
        \node [font=\Huge] at (10,13.5) {$t$};
        \node [font=\Huge] at (6.25,17.25) {$u$};
        \node [font=\Huge] at (9.25,21.75) {$v$};
        \draw [ fill={rgb,255:red,0; green,0; blue,0} ] (18.25,14.25) circle (0.25cm);
        \draw [ fill={rgb,255:red,0; green,0; blue,0} ] (15.75,18) circle (0.25cm);
        \draw [->, >=Stealth, line width=1pt] (18.25,14.25) -- (16,17.75)node[pos=0.5, fill=white]{$w$};
        \draw [ fill={rgb,255:red,0; green,0; blue,0} ] (13.25,21.75) circle (0.25cm);
        \draw [ fill={rgb,255:red,0; green,0; blue,0} ] (13.25,16.75) circle (0.25cm);
        \draw [->, >=Stealth, dashed, line width=1pt] (13.25,16.75) -- (18,14.25)node[pos=0.5, fill=white]{$p$};
        \draw [->, >=Stealth, dashed, line width=1pt] (15.75,18) -- (13.5,21.5)node[pos=0.5, fill=white]{$q$};
        \node [font=\Huge] at (18.25,13.5) {$t$};
        \node [font=\Huge] at (13.25,17.5) {$u$};
        \node [font=\Huge] at (14.25,21.75) {$v$};
        \draw [ fill={rgb,255:red,0; green,0; blue,0} ] (20.75,18) circle (0.25cm);
        \draw [ fill={rgb,255:red,0; green,0; blue,0} ] (23.25,16.75) circle (0.25cm);
        \draw [ fill={rgb,255:red,0; green,0; blue,0} ] (23.25,21.75) circle (0.25cm);
        \draw [->, >=Stealth, line width=1pt] (18.25,14.25) -- (20.5,17.75)node[pos=0.5, fill=white]{$\phi(w)$};
        \draw [->, >=Stealth, dashed, line width=1pt] (20.75,18) -- (23,21.5)node[pos=0.5, fill=white]{$\phi(q)$};
        \draw [->, >=Stealth, dashed, line width=1pt] (23.25,16.75) -- (18.5,14.25)node[pos=0.5, fill=white]{$\phi(p)$};
        \node [font=\Huge] at (23.25,17.5) {$\phi(u)$};
        \node [font=\Huge] at (22.25,21.75) {$\phi(v)$};
        \draw [ color={rgb,255:red,0; green,0; blue,255}, line width=2pt, short] (18.25,14.25) -- (19.5,23);
        \draw [ color={rgb,255:red,0; green,0; blue,255}, line width=2pt, short] (18.25,14.25) -- (24.5,14.25);
        \draw [ color={rgb,255:red,255; green,0; blue,0}, line width=2pt, short] (18.25,14.25) -- (17,23);
        \draw [ color={rgb,255:red,255; green,0; blue,0}, line width=2pt, short] (18.25,14.25) -- (12,14.25);
        \node [font=\Huge, color={rgb,255:red,255; green,0; blue,0}] at (12.75,14.75) {$A$};
        \node [font=\Huge, color={rgb,255:red,0; green,0; blue,255}] at (24,14.75) {$A'$};
    \end{circuitikz}
    }\\\resizebox{0.8\textwidth}{!}{%
    \begin{circuitikz}
        \tikzstyle{every node}=[font=\Huge]
        \draw [ fill={rgb,255:red,0; green,0; blue,0} ] (31.25,14.25) circle (0.25cm);
        \draw [ fill={rgb,255:red,0; green,0; blue,0} ] (31.25,18) circle (0.25cm);
        \draw [->, >=Stealth, line width=1pt] (31.25,14.5) -- (31.25,17.5)node[pos=0.5, fill=white]{$w$};
        \draw [ fill={rgb,255:red,0; green,0; blue,0} ] (31.25,21.75) circle (0.25cm);
        \draw [ fill={rgb,255:red,0; green,0; blue,0} ] (27.5,18) circle (0.25cm);
        \draw [->, >=Stealth, dashed, line width=1pt] (31.25,21.75) -- (31.25,18.5)node[pos=0.5, fill=white]{$p$};
        \draw [->, >=Stealth, dashed, line width=1pt] (31.25,14.25) -- (27.75,17.75)node[pos=0.5, fill=white]{$q$};
        \node [font=\Huge] at (31.25,13.5) {$t$};
        \node [font=\Huge] at (30.5,21.75) {$u$};
        \node [font=\Huge] at (27.5,17.25) {$v$};
        \draw [ fill={rgb,255:red,0; green,0; blue,0} ] (39.5,14.25) circle (0.25cm);
        \draw [ fill={rgb,255:red,0; green,0; blue,0} ] (37,18) circle (0.25cm);
        \draw [->, >=Stealth, line width=1pt] (39.5,14.25) -- (37.25,17.75)node[pos=0.5, fill=white]{$w$};
        \draw [ fill={rgb,255:red,0; green,0; blue,0} ] (34.75,21.75) circle (0.25cm);
        \draw [ fill={rgb,255:red,0; green,0; blue,0} ] (34.75,16.75) circle (0.25cm);
        \draw [->, >=Stealth, dashed, line width=1pt] (34.75,21.75) -- (36.75,18.5)node[pos=0.5, fill=white]{$p$};
        \draw [->, >=Stealth, dashed, line width=1pt] (39.5,14.25) -- (35,16.5)node[pos=0.5, fill=white]{$q$};
        \node [font=\Huge] at (39.5,13.5) {$t$};
        \node [font=\Huge] at (35.5,21.75) {$u$};
        \node [font=\Huge] at (34.5,17.5) {$v$};
        \draw [ fill={rgb,255:red,0; green,0; blue,0} ] (42,18) circle (0.25cm);
        \draw [ fill={rgb,255:red,0; green,0; blue,0} ] (44.5,16.75) circle (0.25cm);
        \draw [ fill={rgb,255:red,0; green,0; blue,0} ] (44.5,21.75) circle (0.25cm);
        \draw [->, >=Stealth, line width=1pt] (39.5,14.25) -- (41.75,17.75)node[pos=0.5, fill=white]{$\phi(w)$};
        \draw [->, >=Stealth, dashed, line width=1pt] (39.5,14.25) -- (44.25,16.5)node[pos=0.5, fill=white]{$\phi(q)$};
        \draw [->, >=Stealth, dashed, line width=1pt] (44.5,21.75) -- (42.25,18.5)node[pos=0.5, fill=white]{$\phi(p)$};
        \node [font=\Huge] at (43.5,21.75) {$\phi(u)$};
        \node [font=\Huge] at (44.5,17.5) {$\phi(v)$};
        \draw [ color={rgb,255:red,0; green,0; blue,255}, line width=2pt, short] (39.5,14.25) -- (40.75,23);
        \draw [ color={rgb,255:red,0; green,0; blue,255}, line width=2pt, short] (39.5,14.25) -- (45.75,14.25);
        \draw [ color={rgb,255:red,255; green,0; blue,0}, line width=2pt, short] (39.5,14.25) -- (38.25,23);
        \draw [ color={rgb,255:red,255; green,0; blue,0}, line width=2pt, short] (39.5,14.25) -- (33.25,14.25);
        \node [font=\Huge, color={rgb,255:red,255; green,0; blue,0}] at (34,14.75) {$A$};
        \node [font=\Huge, color={rgb,255:red,0; green,0; blue,255}] at (45.25,14.75) {$A'$};
    \end{circuitikz}
    }%
    \caption{The general situation of $T$ (left) and $T|^{t \to t}$ (right) when $w$ is positively traversed (top) and negatively traversed (bottom).}
    \label{fig:TandTprime}
\end{figure}
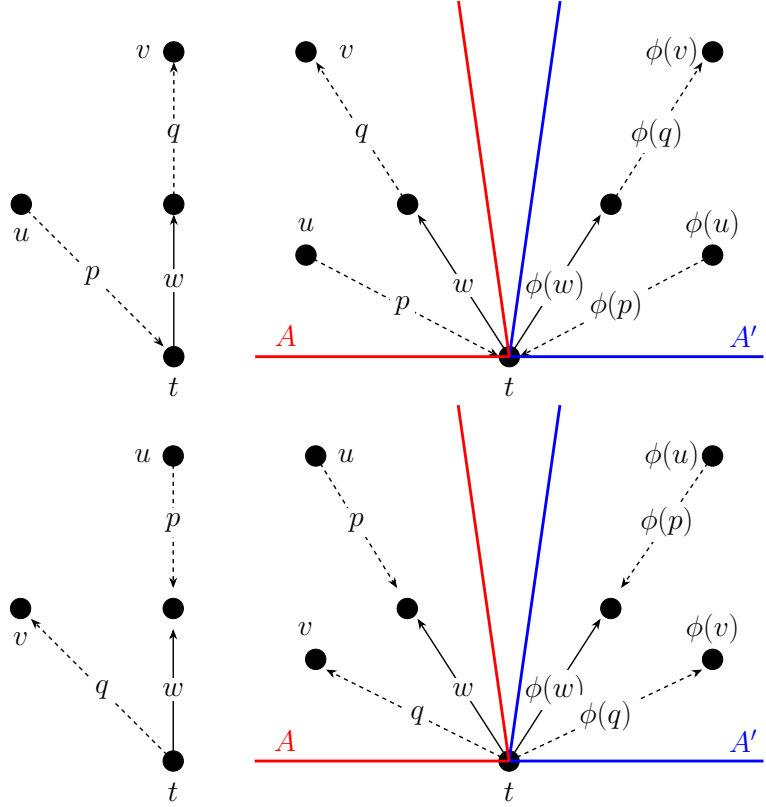

Define the tree $T_{A}$ to be the tree $T|^{t \to t}$ with the entire subtree $A$ moved up to $\omega(\phi(w))$. Similarly, define the tree $T_{A'}$ to be the tree $T|^{t \to t}$ with the entire subtree $A'$ moved up to $\omega(w)$. In both cases, if $\alpha(T)=t$ then we leave the start point of $T|^{t \to t}$ at $t$ when we move, though we move the endpoint with the subtree if applicable. By Corollary \ref{cor:movethru}, we have both $[T_{A}]_\mathcal{M} = [T|^{t \to t}]_\mathcal{M} = 
[T_{A'}]_\mathcal{M}$.

Note that $T_A$ and $T_{A'}$ have isomorphic underlying trees, only differing in the location of endpoint, specifically when $\omega(T) \in \mathrm{Cone}_T(t)$. The general idea of the proof is as follows: in the trees $T_A$ and $T_{A'}$, we may read a $K$-constructable semiwalk labelled $\lambda(p)\lambda(q)$ from $u$ or $\phi(u)$ to $v$ or $\phi(v)$, and appeal to the inductive hypothesis to copy certain cones. We then reappeal to Corollary \ref{cor:movethru} to move our subtrees $A$ or $A'$ back to $t$, and then show they may retract in our new tree. Our upcoming splitting of cases is solely to handle the location of the endpoint in our final tree.

Formally, we must now split into cases depending on $w$ and the location of $\omega(T)$.
    
\textbf{\textit{Case 1: $w$ is traversed positively by $\mathfrak{w}$ and $\omega(T) \in \mathrm{Cone}_T(u)$.}}\\Note that $\omega(T_{A'}) \in \mathrm{Cone}_{T_{A'}}(\phi(u))$. Since $w$ was traversed positively, in $T_{A'}$ there is a semiwalk \[\phi(u) \xrightarrow{\lambda(\phi(p))} \omega(w) \xrightarrow{\lambda(q)} v\] with label $\lambda(\phi(p))\lambda(q) = \lambda(p)\lambda(q)$. Since $\lambda(p)\lambda(q)$ is $K$-constructable, our inductive hypothesis applies and thus we have $[T_{A'}]_\mathcal{M} = [T_{A'}|^{\phi(u) \to v}]_\mathcal{M}$. In the tree $T_{A'}|^{\phi(u) \to v}$, the subgraph $A'$ now certainly does not contain the endpoint, as it was copied over with $\mathrm{Cone}_{T_{A'}}(\phi(u))$. We may now `undo' our move of $A'$ and move it back to $t$; by Corollary \ref{cor:movethru}, our $\mathcal{M}$-value is still unchanged. It follows that now $A'$ will retract onto $A$ and our new copy of $\mathrm{Cone}_{T_{A'}}(\phi(u)) \cong \mathrm{Cone}_T(u)$. This results in exactly the tree $T|^{u\to v}$.

\textbf{\textit{Case 2: $w$ is traversed positively by $\mathfrak{w}$ and $\omega(T) \notin \mathrm{Cone}_T(u)$.}}\\Since $w$ was traversed positively, in $T_{A}$ there is a semiwalk \[u \xrightarrow{\lambda(p)} \omega(\phi(w)) \xrightarrow{\lambda(\phi(q))} \phi(v)\] with label $\lambda(p)\lambda(\phi(q)) = \lambda(p)\lambda(q)$. Since $\lambda(p)\lambda(q)$ is $K$-constructable, our inductive hypothesis applies and thus we have $[T_{A}]_\mathcal{M} = [T_{A}|^{u \to \phi(v)}]_\mathcal{M}$. We may now `undo' our move of $A$ and move it back to $t$; by Corollary \ref{cor:movethru}, our $\mathcal{M}$-value is still unchanged. It follows that $A$ will retract onto $A'$ as it could have in $T|^{t \to t}$. This results in exactly the tree $T|^{u\to v}$.

\textbf{\textit{Case 3: $w$ is traversed negatively by $\mathfrak{w}$ and $\omega(T) \in \mathrm{Cone}_T(u)$.}}\\Note that $\omega(T_{A}) \in \mathrm{Cone}_{T_{A}}(\phi(u))$. Since $w$ was traversed negatively, in $T_{A}$ there is a semiwalk \[\phi(u) \xrightarrow{\lambda(\phi(p))} \alpha(w) \xrightarrow{\lambda(q)} v\] with label $\lambda(\phi(p))\lambda(q) = \lambda(p)\lambda(q)$. Since $\lambda(p)\lambda(q)$ is $K$-constructable, our inductive hypothesis applies and thus we have $[T_{A}]_\mathcal{M} = [T_{A}|^{\phi(u) \to v}]_\mathcal{M}$. In the tree $T_{A}|^{\phi(u) \to v}$, the subgraph $A'$ now certainly does not contain the endpoint, as it was copied over with $\mathrm{Cone}_{T_{A}}(\phi(u))$. We may now `undo' our move of $A$ and move it back to $t$; by Corollary \ref{cor:movethru}, our $\mathcal{M}$-value is still unchanged. It follows that now $A'$ will retract onto $A$ and our new copy of $\mathrm{Cone}_{T_{A}}(\phi(u)) \cong \mathrm{Cone}_T(u)$. This results in exactly the tree $T|^{u\to v}$.

\textbf{\textit{Case 4: $w$ is traversed negatively by $\mathfrak{w}$ and $\omega(T) \notin \mathrm{Cone}_T(u)$.}}\\Since $w$ was traversed negatively, in $T_{A'}$ there is a semiwalk \[u \xrightarrow{\lambda(p)} \omega(w) \xrightarrow{\lambda(\phi(q))} \phi(v)\] with label $\lambda(p)\lambda(\phi(q)) = \lambda(p)\lambda(q)$. Since $\lambda(p)\lambda(q)$ is $K$-constructable, our inductive hypothesis applies and thus we have $[T_{A'}]_\mathcal{M} = [T_{A'}|^{u \to \phi(v)}]_\mathcal{M}$. We may now `undo' our move of $A'$ and move it back to $t$; by Corollary \ref{cor:movethru}, our $\mathcal{M}$-value is still unchanged. It follows that $A$ will retract onto $A'$ as it could have in $T|^{t \to t}$. This results in exactly the tree $T|^{u\to v}$.

In all cases, we have constructed our desired tree. The result therefore follows by induction.\end{proof}

Theorem \ref{thm:copy} is our first clue that the pretzel monoid truly is $\mathcal{M}$. Notice that if $u$ and $v$ are identified vertices in $\widetilde{T}$, then the graphs $T$ and $T|^{u \to v}$ have isomorphic retracts after the vertices $u$ and $v$ are identified; in particular $\overline{\widetilde{T}} = \overline{\widetilde{T|^{u \to v}}}$.

Our strategy for showing $\overline{\widetilde{S}} = \overline{\widetilde{T}}$ implies $[S]_\mathcal{M}= [T]_\mathcal{M}$ involves building a tree $\mathcal{U}$ depending only on the pretzel $\overline{\widetilde{S}} = \overline{\widetilde{T}}$, and showing that $[S]_\mathcal{M} = [\mathcal{U}]_\mathcal{M} = [T]_\mathcal{M}$. This $\mathcal{U}$ will be the tree of \textit{almost simple paths} of $\overline{\widetilde{S}}$ ($=\overline{\widetilde{T}}$).

Fix a tree $T$ and its pretzel $\overline{\widetilde{T}}$. Suppose $\overline{\widetilde{T}}$ has $n$ edges, and denote them by the set $\mathcal{E}:= \{1,2,\dots,n\}$. An \textit{almost simple path} in $\overline{\widetilde{T}}$ is a directed path starting at the root which does not repeat vertices, except possibly its final visited vertex.

In general, these are paths with label such that any subword equal to $1$ in $C$ is a suffix. Each of these paths admit a labelling by $X$ and also by the set $\mathcal{E}$ where the label of an edge is determined by the index of the edge it was read from in $\overline{\widetilde{T}}$.

Here, it is necessary to introduce \textit{determinisation of trees} (see, for example, \cite{cfnb:geometry}). An $X$-tree $T$ is $X$-\textit{deterministic} if for every $X \in X$, any subgraph of $T$ with equally-labelled edges $e_1$ and $e_2$ of the form
\begin{equation}\label{eq:detmin}
    \begin{tikzpicture}
        \GraphInit[vstyle=Empty]
        \SetVertexSimple[MinSize = 1pt]
        \SetUpEdge[lw = 0.5pt]
        \tikzset{EdgeStyle/.style={->-}}
        \tikzset{VertexStyle/.append style = {minimum size = 3pt, inner sep = 0pt}}
        \SetVertexNoLabel
        \SetGraphUnit{2}
        
        \Vertex[x=0,y=1]{l};
        \Vertex[x=1,y=1.5]{t};
        \Vertex[x=1,y=0.5]{b};

        \Edge(l)(t)\draw (l) -- (t) node [midway, above=2pt] {$x$};
        \Edge(l)(b)\draw (l) -- (b) node [midway, below=2pt] {$x$}; 
    \end{tikzpicture}
\end{equation} implies that the $e_1$ and $e_2$ are the same edge of $T$. The \textit{determinisation} of $T$ with respect to the labelling $X$, is the deterministic tree obtained by sequentially transforming all subgraphs of the form \eqref{eq:detmin} to a single edge labelled $x$, as such the cone of the identified vertex becomes the union of two cones. This process is referred to as performing \textit{Stallings foldings} \cite{kapovich:fold, stallings:topology}. Such a graph is unique (see, for example, \cite[Theorem 4.4]{stephen:applications}).

The tree $\mathcal{U}$ is taken to be the (unrooted) tree consisting of all of the almost simple paths of $\overline{\widetilde{T}}$, determinised with respect to the labelling by $\{1,\dots,n\}$. Note that $\mathcal{U}$ may not be deterministic with respect to $X$. 

Each vertex or edge of $\mathcal{U}$ corresponds to a unique vertex or edge of $\overline{\widetilde{T}}$ with the expected correspondence. Since $\omega(T)$ may lie on multiple almost simple paths, there may be multiple vertices of $\mathcal{U}$ which correspond to $\omega(T)$. Because of this, we formally define the set \[\textbf{U}(\overline{\widetilde{T}}) := \{(\mathcal{U},v)\ |\ v \in \mathrm{V}(\mathcal{U}) \textrm{ corresponds to }\omega(T)\}.\]We represent $\mathbf{U}(\overline{\widetilde{T}})$ as a single tree, with multiple endpoints corresponding to each $(\mathcal{U},v) \in \mathbf{U}(\overline{\widetilde{T}})$ where we treat $v$ as the endpoint of $\mathcal{U}$. This tree admits both $X$- and $\mathcal{E}$-labellings via the respective labellings for $\mathcal{U}$. When referring to the labelling of edges, we will modify our $\lambda$ notation to $\lambda_X$ or $\lambda_{\mathcal{E}}$ to avoid ambiguity.

\begin{example}\label{ex:nonuniqU}
    Let $C = \mathbb{Z}_3 \times \mathbb{Z}_3 = \langle x,y \rangle$. Figure \ref{fig:pretzellabel} shows a pretzel $\Gamma$ with an $\mathcal{E}$-labelling where $\mathcal{E} = \left\{1,\dots,10\right\}$. We have the three trees of almost simple paths, shown in Figure \ref{fig:u}. Note that $\textbf{U}(\Gamma)$ is deterministic with this $\mathcal{E}$-labelling.
    \begin{figure}[ht]
    \centering
        {
        \begin{tikzpicture}
            \GraphInit[vstyle=Empty]
            \SetVertexSimple[MinSize = 1pt]
            \SetUpEdge[lw = 0.5pt]
            \tikzset{EdgeStyle/.style={->-}}
            \tikzset{VertexStyle/.append style = {minimum size = 3pt, inner sep = 0pt}}
            \SetVertexNoLabel
            \SetGraphUnit{2}
            
            \node (A) at ( 0,0) {\large$+$};
            \Vertex[x=0,y=1]{U1}
            \Vertex[x=0,y=2]{U2}
            \node (U3) at ( 0,3) {\large$\times$};
            \Vertex[x=0,y=4]{U4}
            \Vertex[x=0,y=5]{U5}
            \Vertex[x=-2,y=4]{UL}
            \Vertex[x=-2,y=2]{DL}
    
            \Edge(A)(U1)\draw (A) -- (U1) node [midway, right=2pt] {$x$};
            \Edge(U1)(U2)\draw (U1) -- (U2) node [midway, right=2pt] {$x$};
            \Edge(U2)(U3)\draw (U2) -- (U3) node [midway, right=2pt] {$y$};
            \Edge(U3)(U4)\draw (U3) -- (U4) node [midway, right=2pt] {$x$};
            \Edge(U4)(U5)\draw (U4) -- (U5) node [midway, right=2pt] {$y$};
            \Edge(DL)(U1)\draw (DL) -- (U1) node [midway, below=2pt] {$y$};
            \Edge(UL)(DL)\draw (UL) -- (DL) node [midway, left=2pt] {$y$};
            \Edge(U1)(UL)\draw (U1) -- (UL) node [midway, left=2pt] {$y$};
            \Edge(U4)(UL)\draw (U4) -- (UL) node [midway, above=2pt] {$x$};
            \Edge(UL)(U3)\draw (UL) -- (U3) node [midway, above=2pt,right=2pt] {$x$};
      
        \end{tikzpicture}
         }\hspace{0.05\textwidth}
        {
        \begin{tikzpicture}
            \GraphInit[vstyle=Empty]
            \SetVertexSimple[MinSize = 1pt]
            \SetUpEdge[lw = 0.5pt]
            \tikzset{EdgeStyle/.style={->-}}
            \tikzset{VertexStyle/.append style = {minimum size = 3pt, inner sep = 0pt}}
            \SetVertexNoLabel
            \SetGraphUnit{2}
            
            \node (A) at ( 0,0) {\large$+$};
            \Vertex[x=0,y=1]{U1}
            \Vertex[x=0,y=2]{U2}
            \node (U3) at ( 0,3) {\large$\times$};
            \Vertex[x=0,y=4]{U4}
            \Vertex[x=0,y=5]{U5}
            \Vertex[x=-2,y=4]{UL}
            \Vertex[x=-2,y=2]{DL}
    
            \Edge(A)(U1)\draw (A) -- (U1) node [midway, right=2pt] {$1$};
            \Edge(U1)(U2)\draw (U1) -- (U2) node [midway, right=2pt] {$2$};
            \Edge(U2)(U3)\draw (U2) -- (U3) node [midway, right=2pt] {$3$};
            \Edge(U3)(U4)\draw (U3) -- (U4) node [midway, right=2pt] {$4$};
            \Edge(U4)(U5)\draw (U4) -- (U5) node [midway, right=2pt] {$5$};
            \Edge(DL)(U1)\draw (DL) -- (U1) node [midway, below=2pt] {$8$};
            \Edge(UL)(DL)\draw (UL) -- (DL) node [midway, left=2pt] {$7$};
            \Edge(U1)(UL)\draw (U1) -- (UL) node [midway, left=2pt] {$6$};
            \Edge(U4)(UL)\draw (U4) -- (UL) node [midway, above=2pt] {$10$};
            \Edge(UL)(U3)\draw (UL) -- (U3) node [midway, above=2pt,right=2pt] {$9$};
      
        \end{tikzpicture}
        }
    \caption{A relabelling of a pretzel.}
    \label{fig:pretzellabel}
    \end{figure}

    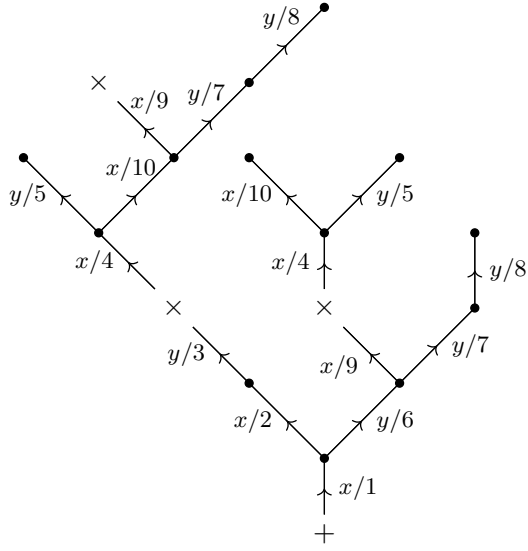
\begin{figure}[ht]
    \centering
    \begin{tikzpicture}
        \GraphInit[vstyle=Empty]
        \SetVertexSimple[MinSize = 1pt]
        \SetUpEdge[lw = 0.5pt]
        \tikzset{EdgeStyle/.style={->-}}
        \tikzset{VertexStyle/.append style = {minimum size = 3pt, inner sep = 0pt}}
        \SetVertexNoLabel
        \SetGraphUnit{2}
        
        \node (A) at ( 0,0) {\large$+$};
        \Vertex[x=0,y=1]{U}
        \Vertex[x=-1,y=2]{L}
        \node (LL) at ( -2,3) {\large$\times$};
        \Vertex[x=-3,y=4]{LLL}
        \Vertex[x=-4,y=5]{LLLL}
        \Vertex[x=-2,y=5]{LLLR}
        \node (LLLRL) at ( -3,6) {\large$\times$};
        \Vertex[x=-1,y=6]{LLLRR}
        \Vertex[x=0,y=7]{LLLRRR}

        \Vertex[x=1,y=2]{R}
        \Vertex[x=2,y=3]{RR}
        \Vertex[x=2,y=4]{RRR}
        \node (RL) at ( 0,3) {\large$\times$};
        \Vertex[x=0,y=4]{RLU}
        \Vertex[x=-1,y=5]{RLUL}
        \Vertex[x=1,y=5]{RLUR}

        \Edge(A)(U)\draw (A) -- (U) node [midway, right=2pt] {\small{$x / 1$}};
        
        \Edge(U)(L)\draw (U) -- (L) node [midway, left=2pt] {\small{$x / 2$}};
        \Edge(L)(LL)\draw (L) -- (LL) node [midway, left=2pt] {\small{$y / 3$}};
        \Edge(LL)(LLL)\draw (LL) -- (LLL) node [midway, left=2pt] {\small{$x / 4$}};
        \Edge(LLL)(LLLL)\draw (LLL) -- (LLLL) node [midway, left=2pt] {\small{$y / 5$}};
        \Edge(LLL)(LLLR)\draw (LLL) -- (LLLR) node [midway, left=2pt, above=2pt] {\small{$x / 10$}};
        \Edge(LLLR)(LLLRL)\draw (LLLR) -- (LLLRL) node [midway, right=2pt,above=2pt] {\small{$x / 9$}};
        \Edge(LLLR)(LLLRR)\draw (LLLR) -- (LLLRR) node [midway, left=2pt, above=2pt] {\small{$y / 7$}};
        \Edge(LLLRR)(LLLRRR)\draw (LLLRR) -- (LLLRRR) node [midway, left=2pt, above=2pt] {\small{$y / 8$}};

        \Edge(U)(R)\draw (U) -- (R) node [midway, right=2pt] {\small{$y / 6$}};
        \Edge(R)(RR)\draw (R) -- (RR) node [midway, right=2pt] {\small{$y / 7$}};
        \Edge(RR)(RRR)\draw (RR) -- (RRR) node [midway, right=2pt] {\small{$y / 8$}};
        \Edge(R)(RL)\draw (R) -- (RL) node [midway, below=5pt, left=1pt] {\small{$x / 9$}};
        \Edge(RL)(RLU)\draw (RL) -- (RLU) node [midway, left=2pt] {\small{$x / 4$}};
        \Edge(RLU)(RLUL)\draw (RLU) -- (RLUL) node [midway, left=2pt] {\small{$x / 10$}};
        \Edge(RLU)(RLUR)\draw (RLU) -- (RLUR) node [midway, right=2pt] {\small{$y / 5$}};

          \end{tikzpicture}
    \caption{The tree $\textbf{U}(\overline{\widetilde{T}})$ for the pretzel of Figure \ref{fig:pretzellabel}, shown with both $X$- and $\mathcal{E}$-labellings and with all possible endpoints marked.}
    \label{fig:u}
    \end{figure}
\end{example}

We now proceed to show our main result, using the trees in $\textbf{U}(\overline{\widetilde{T}})$.

\begin{lem}\label{lem:conj1}
        Suppose $S$ is an $X$-tree such that idempaths go to leaves. Then $S$ admits a birooted $X$-graph morphism to some $\mathcal{U}_S \in \textbf{U}(\overline{\widetilde{S}})$.
    \end{lem}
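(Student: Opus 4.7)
The plan is to define a birooted $X$-graph morphism $\psi \colon S \to \mathcal{U}_S$ by tracing each root-to-vertex path of $S$ through the pretzel $\overline{\widetilde{S}}$ and then lifting it into the $\mathcal{E}$-deterministic tree of almost simple paths. I would start by letting $\phi \colon S \to \overline{\widetilde{S}}$ denote the composition of the idempath-identification morphism $\sim$ with a retraction onto the core; this is automatically a birooted morphism of $X$-graphs. For each vertex $v \in \mathrm{V}(S)$, let $p_v$ be the unique directed path $v_0 \to v_1 \to \cdots \to v_k = v$ from $\alpha(S)$ to $v$ in the tree $S$. The crux of the proof is to establish that $\phi(p_v)$ is an almost simple path in $\overline{\widetilde{S}}$ for every $v$.

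To prove this, I would suppose $\phi(v_i) = \phi(v_j)$ for some $0 \le i < j \le k$. The label $w$ of the subpath $v_i \to \cdots \to v_j$ then labels a directed cycle at $\phi(v_i)$ in $\overline{\widetilde{S}}$, which is a retract of the descendant $\widetilde{S}$ of the tree $S$. Reducing to the two-sided cancellative case via Proposition \ref{prop:twosided}, Corollary \ref{cor:tildekeepsCvalue} forces this cycle to be an idempath, so $[w]_C = 1$. Hence $v_i \to \cdots \to v_j$ is itself an idempath in $S$, and by hypothesis its endpoint $v_j$ must be a leaf. Since $v_j$ has a successor on $p_v$ whenever $j < k$, the repetition is forced to satisfy $j = k$. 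A further application of the same argument to any two indices $i_1 < i_2 < k$ with $\phi(v_{i_1}) = \phi(v_{i_2})$ would make $v_{i_2}$ a leaf, again contradicting the existence of its successor on $p_v$. Therefore $\phi(p_v)$ visits distinct vertices except possibly that its endpoint $\phi(v_k)$ coincides with one earlier vertex, i.e.\ it is almost simple.

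With the claim in hand, I would define $\psi(v)$ to be the unique vertex of $\mathcal{U}$ reached from the root by reading the $\mathcal{E}$-label sequence of $\phi(p_v)$; since $\mathcal{U}$ is the $\mathcal{E}$-determinisation of all almost simple paths of $\overline{\widetilde{S}}$, such a vertex exists and is unique. On an edge $v \to v'$ of $S$ with $X$-label $x$, the path $p_{v'}$ extends $p_v$ by an edge whose $\phi$-image in $\overline{\widetilde{S}}$ carries $X$-label $x$ and some $\mathcal{E}$-label $e$, and the unique edge of $\mathcal{U}$ leaving $\psi(v)$ with $\mathcal{E}$-label $e$ carries $X$-label $x$ and terminates at $\psi(v')$; so $\psi$ respects incidence and $X$-labels. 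It sends $\alpha(S)$ to the root of $\mathcal{U}$, and $\psi(\omega(S))$ corresponds to $\phi(\omega(S)) = \omega(\overline{\widetilde{S}})$, so choosing $\mathcal{U}_S = (\mathcal{U}, \psi(\omega(S))) \in \textbf{U}(\overline{\widetilde{S}})$ makes $\psi$ birooted. The main obstacle is the almost-simplicity claim: it is the only step where the hypothesis on idempaths reaching leaves and the cancellativity-based Corollary \ref{cor:tildekeepsCvalue} enter, and once it is secured the rest of the construction is essentially forced by the $\mathcal{E}$-determinism of $\mathcal{U}$.
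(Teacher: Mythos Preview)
Your proposal is correct and follows essentially the same route as the paper: label $S$ by the edges of $\overline{\widetilde{S}}$ via the composite morphism, observe that root-to-vertex paths in $S$ map to almost simple paths, and use $\mathcal{E}$-determinism of $\mathcal{U}$ to read off a well-defined morphism, selecting the endpoint copy so as to make it birooted. You in fact justify the almost-simplicity claim more carefully than the paper does (the paper simply asserts it); your argument via cycles in $\overline{\widetilde{S}}$ being idempaths is right, though the cleaner citation is Lemma~\ref{lem:pathsintilde} (cycles in retracts of descendants of trees are idempaths) rather than Corollary~\ref{cor:tildekeepsCvalue}, and Corollary~\ref{cor:twosided} rather than Proposition~\ref{prop:twosided} for the reduction to two-sided cancellativity.
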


    \begin{proof}
        Let $E$ be the edge set of $\overline{\widetilde{S}}$. We may consider $S$ as an $\mathcal{E}$-tree by fixing a map from $S$ to $\overline{\widetilde{S}}$ and labelling each edge by its image under this map. By definition, any $(\mathcal{U},v) \in \textbf{U}(\overline{\widetilde{S}})$ also has such a labelling, and under this labelling is deterministic.

        For any $(\mathcal{U},v) \in \textbf{U}(\overline{\widetilde{S}})$, we define a map $\phi_{(\mathcal{U},v)}: S \rightarrow (\mathcal{U},v)$ on the vertices of $S$ as follows. For $u \in V(S)$, consider the $\mathcal{E}$-label of the unique path from the start of $S$ to $u$. Since all idempaths in $S$ go to leaves, this path is almost simple in $\overline{\widetilde{S}}$, and hence corresponds to some path from the root in any $(\mathcal{U},v) \in \textbf{U}(\overline{\widetilde{S}})$ with the same $\mathcal{E}$-label. Moreover, since each $(\mathcal{U},v)$ is $\mathcal{E}$-deterministic, such a path is unique. Define $\phi_{(\mathcal{U},v)}(u)$ to be the vertex of $\mathcal{U}$ at the end of this path. Extend each $\phi_{(\mathcal{U},v)}$ to edges in the natural way.

        We claim that $\phi_{(\mathcal{U},v)}$ is a birooted $X$-graph morphism for some $(\mathcal{U},v) \in \textbf{U}(\overline{\widetilde{S}})$. By definition, all such maps respects adjacency of edges and $\mathcal{E}$-labels (and hence $X$-labels). Each also clearly sends $\alpha(S)$ to $\alpha((\mathcal{U},v))$.
        
        Consider the image of $\omega(S)$ under each $\phi_{(\mathcal{U},v)}$. Since the trunk of $S$ corresponds to an almost simple path in $\overline{\widetilde{S}}$, there is some $(\mathcal{U},v) \in \textbf{U}(\overline{\widetilde{S}})$ whose endpoint satisfies $\omega((\mathcal{U},v)) = \phi_{(\mathcal{U},v)}(v)$. This particular $\phi_{(\mathcal{U},v)}$ is therefore our desired birooted graph morphism and $\mathcal{U}_S := (\mathcal{U},v)$.\end{proof}

    \begin{cor}\label{cor:conj1}
        Let $S$ and $\mathcal{U}_S$ be as in Lemma \ref{lem:conj1}. If there exists a subgraph of $S$, rooted at $\alpha(S)$ and containing $\omega(S)$, which is isomorphic as a birooted $X$-graph to some $\mathcal{V} \in \textbf{U}(\overline{\widetilde{S}})$, then $\mathcal{V} \cong \mathcal{U}_S$.
    \end{cor}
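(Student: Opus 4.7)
The plan is to exploit the construction from the proof of Lemma~\ref{lem:conj1} to show that the hypothesised subgraph $S'$ forces $\mathcal{V}$ to have the correct endpoint marking. Let $\iota \colon \mathcal{V} \to S'$ denote the birooted $X$-graph isomorphism, apply the construction from the proof of Lemma~\ref{lem:conj1} to define $\phi_{\mathcal{V}} \colon S \to \mathcal{V}$, and view $\iota$ as a map $\mathcal{V} \to S$ via the inclusion $S' \hookrightarrow S$. The heart of the argument is to show that the composition $\phi_{\mathcal{V}} \circ \iota \colon \mathcal{V} \to \mathcal{V}$ is the identity on $\mathcal{V}$.

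Granted this, applying the identity to the endpoint yields $\omega(\mathcal{V}) = \phi_{\mathcal{V}}(\iota(\omega(\mathcal{V}))) = \phi_{\mathcal{V}}(\omega(S))$, which means $\phi_{\mathcal{V}}$ is itself a birooted morphism. Since all elements of $\textbf{U}(\overline{\widetilde{S}})$ share the same underlying tree $\mathcal{U}$ and the underlying vertex map of $\phi_{(\mathcal{U},v)}$ does not depend on $v$, the endpoint singled out in Lemma~\ref{lem:conj1} is uniquely determined as the image of $\omega(S)$ under this common vertex map. Hence $\mathcal{V}$ and $\mathcal{U}_S$ carry the same endpoint, so $\mathcal{V} \cong \mathcal{U}_S$.

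To establish the identity claim, the strategy is to arrange that $\iota$ preserves $\mathcal{E}$-labels. The $\mathcal{E}$-labelling on $S$ was defined in the proof of Lemma~\ref{lem:conj1} via a chosen map $S \to \overline{\widetilde{S}}$, and there is flexibility in this choice coming from the non-uniqueness of the retraction $\widetilde{S} \to \overline{\widetilde{S}}$. Since $\iota$ identifies $\mathcal{V}$ with a birooted subgraph $S'$ of $S$, and $\mathcal{V}$ carries a canonical map to $\overline{\widetilde{S}}$ intrinsic to its construction, one may (by confluence of retractions) pick the retraction so that the restriction of the $\mathcal{E}$-labelling on $S$ to $S'$ matches the canonical labelling on $\mathcal{V}$ via $\iota$. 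Under this compatible labelling, for each vertex $v \in \mathcal{V}$ the path in $\mathcal{V}$ from $\alpha(\mathcal{V})$ to $v$ and the path in $S$ from $\alpha(S)$ to $\iota(v)$ carry the same $\mathcal{E}$-label; then $\phi_{\mathcal{V}}(\iota(v))$, being by $\mathcal{E}$-determinism of $\mathcal{V}$ the unique endpoint of the path from $\alpha(\mathcal{V})$ with that $\mathcal{E}$-label, must be $v$.

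The main obstacle is justifying this coherent choice of retraction, which hinges on confluence of retractions and the observation that $\iota(\mathcal{V})$ is already a birooted subgraph of $S$ on which the desired $\mathcal{E}$-labelling is naturally defined. Once this technicality is dispatched, the rest of the argument is a direct bookkeeping exercise.
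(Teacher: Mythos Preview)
Your approach and the paper's share the same underlying idea: both endpoints $\omega(\mathcal{U}_S)$ and $\omega(\mathcal{V})$ should be identified with $\phi(\omega(S))$, where $\phi$ is the common vertex map built in Lemma~\ref{lem:conj1}. The paper argues this directly via the $\mathcal{E}$-label $\ell$ of the trunk of $S$: since the trunk lies in the subgraph isomorphic to $\mathcal{V}$, the paper takes $\ell$ to be the label of the trunk of $\mathcal{V}$ as well, so both endpoints sit at the end of the unique path in $\mathcal{U}$ with $\mathcal{E}$-label $\ell$. Your detour through $\phi_{\mathcal{V}}\circ\iota=\mathrm{id}_{\mathcal{V}}$ is stronger than needed---only the value at the single vertex $\omega(\mathcal{V})$ matters---and it forces you to match $\mathcal{E}$-labels on all of $\mathcal{V}$ rather than just along the trunk.

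There is a genuine problem in how you dispatch the ``main obstacle''. Confluence of retractions says that all cores of $\widetilde{S}$ are isomorphic; it does not say that a retraction $\widetilde{S}\to\overline{\widetilde{S}}$ can be chosen extending an arbitrary prescribed map on the image of $S'$, which is what you need. More seriously, the $\mathcal{E}$-labelling on $S$ was already fixed in Lemma~\ref{lem:conj1} when $\mathcal{U}_S$ was produced: it is precisely that labelling which determines the vertex map $\phi$ and hence $\omega(\mathcal{U}_S)=\phi(\omega(S))$. If you now re-choose the labelling to make $\iota$ $\mathcal{E}$-preserving, the resulting vertex map need not agree with the original one, so your argument only shows that $\mathcal{V}$ coincides with the output of Lemma~\ref{lem:conj1} for \emph{your} new labelling, not with the $\mathcal{U}_S$ given in the hypothesis. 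Your penultimate paragraph silently identifies these two possibly different maps. To close the gap along your lines you would need to show that $\phi(\omega(S))$ is independent of the choice of map $S\to\overline{\widetilde{S}}$, and you have not supplied such an argument.
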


    \begin{proof}
        By Lemma \ref{lem:conj1}, there exists some birooted $X$-graph morphism $\phi:S \to \mathcal{U}_S$. Suppose, for a contradiction, that $\mathcal{V} \ncong \mathcal{U}_S$. Since $\mathcal{U}_S,\mathcal{V} \in \textbf{U}(\overline{\widetilde{S}})$, they only differ in the location of their endpoint. Therefore a contradiction will be reached if we show that $\omega(\mathcal{V})$ is in the same location as $\omega(\mathcal{U}_S)$.
        
        Since $\omega(S) \in \mathcal{V}$, the unique path from the start vertex to the end vertex, say $\mathcal{E}$-labelled by $\ell$, is also the label of trunk of $\mathcal{V}$. By definition of $\phi$, $\omega(\mathcal{U})$ is at the end of the unique path in $\mathcal{U}$ from the start vertex labelled $\ell$. But this is exactly where the endpoint of $\mathcal{V}$ is, hence a contradiction.
    \end{proof}

    \begin{lem}\label{lem:conj2}
        Fix an $X$-tree $T$ and its idempath identified descendant $\widetilde{T}$. Let $S$ be an $X$-tree and suppose there exists a birooted $X$-graph morphism $\phi:S \rightarrow \widetilde{T}$. Then there exists a birooted $X$-tree $T_S$, with $[T_S]_\mathcal{M} = [T]_\mathcal{M}$, which contains an isomorphic copy of $S$ rooted at $\alpha(T_S)$, with $\omega(T_S)$ located at the natural image of the endpoint of $S$ under this embedding.
    \end{lem}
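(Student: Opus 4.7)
The plan is to induct on the number of edges of $S$, using Theorem~\ref{thm:copy} to incrementally enlarge $T$ into $T_S$ while preserving its value in $\mathcal{M}$.

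In the base case, $S$ has no edges and consists of a single vertex satisfying $\alpha(S) = \omega(S)$, which forces $\alpha(\widetilde{T}) = \omega(\widetilde{T})$. The trunk of $T$ then projects to a directed cycle in $\widetilde{T}$, hence an idempath by Corollary~\ref{cor:tildekeepsCvalue}. Corollary~\ref{cor:idemtrunk} then gives that $[T]_\mathcal{M}$ is idempotent, so $[T]_\mathcal{M} = [T]_\mathcal{M}^+ = [T^{(+)}]_\mathcal{M}$, and I may take $T_S := T^{(+)}$, into which the single-vertex $S$ embeds at $\alpha(T_S) = \omega(T_S)$.

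For the inductive step, I would process the vertices of $S$ in BFS order from $\alpha(S)$, maintaining the invariant that the current tree $T_i$ has $[T_i]_\mathcal{M} = [T]_\mathcal{M}$, that $\widetilde{T_i} \cong \widetilde{T}$, and that the already-processed subtree of $S$ is injectively embedded in $T_i$ rooted at $\alpha(T_i) = \alpha(T)$, in a manner compatible with $\phi$ via the quotient $T_i \to \widetilde{T_i}$. When adding a new edge $u \xrightarrow{x} u'$ of $S$ with $u$ already embedded at some $u^* \in T_i$, the edge $\phi(u) \xrightarrow{x} \phi(u')$ of $\widetilde{T}$ lifts to an edge $a \xrightarrow{x} a'$ of $T \subseteq T_i$. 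Since $a$ and $u^*$ both project to $\phi(u)$ in $\widetilde{T_i}$ they are $\sim$-identified, so Theorem~\ref{thm:copy} applied to the pair $(a, u^*)$ produces $T_{i+1}$ containing a fresh $x$-labelled edge out of $u^*$ terminating at a copy of $a'$; I embed $u'$ at this copy, which projects to $\phi(u')$ as required.

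Once all vertices have been placed in a tree $T_k$, the endpoint $\omega(T_k)$ need not coincide with the embedded image $v^*$ of $\omega(S)$, but both project to $\omega(\widetilde{T}) = \phi(\omega(S))$ and are therefore $\sim$-identified in $\widetilde{T_k} \cong \widetilde{T}$. A final application of Theorem~\ref{thm:copy} copying $\mathrm{Cone}_{T_k}(\omega(T_k))$ to $v^*$ relocates the endpoint to $v^*$ (since $\omega(T_k)$ lies inside the cone being copied) and yields the desired $T_S$, while leaving the embedded copy of $S$ intact since copy operations only add structure. The principal obstacle is the bookkeeping required to verify the invariant $\widetilde{T_i} \cong \widetilde{T}$ at every step --- newly copied cones must be shown to collapse back onto the originals under $\sim$, so that identifications like $\widetilde{a} = \widetilde{u^*}$ remain justified --- together with checking that the embedding of $S$ remains injective and birooted as copies accumulate.
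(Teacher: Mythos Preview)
Your overall strategy --- iterated applications of Theorem~\ref{thm:copy} to grow a copy of $S$ inside a tree equal to $T$ in $\mathcal{M}$ --- matches the paper's, and your base case is fine. But the inductive step has a genuine gap.

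The invariant $\widetilde{T_i} \cong \widetilde{T}$ is false in general. When you form $T|^{a \to b}$ with $a,b$ identified in $\widetilde T$, the copied cone does not collapse onto the original under $\sim$: idempath identification merges vertices, not parallel edges, so after $\sim$ you typically obtain $\widetilde T$ with extra parallel edges. More seriously, a copied vertex need not be $\sim$-identified with its original. For instance, take $C = C_2 \ast \langle y\rangle$ and $T$ the tree with edges $\alpha \xrightarrow{x} v \xrightarrow{x} u$ and $\alpha \xrightarrow{y} w$; then $\alpha$ and $u$ are identified in $\widetilde T$, but in $T|^{\alpha \to u}$ the copy $w'$ of the leaf $w$ is \emph{not} identified with $w$ in $\widetilde{T|^{\alpha \to u}}$ (there is no idempath between them). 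Your BFS scheme places each new $u^*$ at such a freshly-copied vertex, and the next step requires $u^*$ to be identified in $\widetilde{T_i}$ with a vertex $a$ of the original $T$; the argument ``both project to $\phi(u)$'' establishes only that they have the same $C$-value, not that they are $\sim$-identified in $T_i$. The same issue recurs in your final endpoint-relocation step.

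The paper sidesteps this entirely by building $S$ \emph{top-down} rather than BFS. For each edge $e$ of $S$ it lifts to an edge $e'$ of $T$, and it assembles $\mathrm{Cone}_S(\omega(e))$ at the vertex $\omega(e')$ \emph{of the original $T$}. Every application of Theorem~\ref{thm:copy} is then between two vertices of $T$ (namely $\alpha(f_j')$ and $\omega(e')$ for child edges $f_j$), and these are identified in $\widetilde T$ because both map to $\phi(\omega(e))$. The required constructable semiwalk therefore lives in $T$ and persists to every $T_i$, so no bookkeeping about $\widetilde{T_i}$ is needed at all. If you want to rescue your BFS approach, you would need to maintain instead the invariant that each embedded $u^*$ is joined to its corresponding $T$-vertex by a constructable semiwalk in $T_i$ --- which is not automatic and, as the example shows, can fail for the naive scheme.
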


    \begin{proof}
        Consider the graph morphism $\sim: T \to \widetilde{T}$. Let $e$ be any edge of $S$. Since $\phi(e)$ is an edge of $\widetilde{T}$, there is a unique edge $e'$ of $T$ for which $\widetilde{e'} = \phi(e)$. Since $\phi$ and $\sim$ are graph morphisms, we have that \[
        \phi(\alpha(e)) = \alpha\left(\widetilde{e'}\right) = \widetilde{\alpha(e')} \textrm{ and }\phi(\omega(e)) = \widetilde{\omega(e')} \textrm{ and } \lambda_X(e) = \lambda_X(e').\]Our strategy is to build a copy of $S$ onto $T$ by copying the primed edges into their correct location, relying on Theorem \ref{thm:copy}. We do so top-down, assembling the `highest' cones of $S$ first and later moving them into position. Since we require the endpoint to be in our built $S$, we first ensure that $\omega(T)$ is in the correct location so we can copy it later.

        If $\omega(S) = \alpha(S)$, then we must have \[\widetilde{\alpha(T)} = \alpha\left(\widetilde{T}\right) = \phi(\alpha(S)) = \phi(\omega(S)) = \omega\left(\widetilde{T}\right) = \widetilde{\omega(T)}.\]Hence the start and end vertex of $T$ are connected by a constructable semiwalk by Theorem \ref{thm:whenidentify}. If they are trivially connected then set $T_0 := T$, otherwise set $T_0 := T|^{\omega(T) \to \alpha(T)}$.
        
        Otherwise, if $\omega(S) \neq \alpha(S)$, then there is some edge $e$ of $S$ with $\omega(e) = \omega(S)$. Via similar reasoning to the case above, we see that\[\widetilde{\omega(T)} = \omega\left(\widetilde{T}\right) = \phi(\omega(S)) = \phi(\omega(e)) = \widetilde{\omega(e')}.\]Hence $\omega(e')$ and $\omega(T)$ are connected by a constructable semiwalk by Theorem \ref{thm:whenidentify}. Set $T_0 := T|^{\omega(T)\to\omega(e')}$.

        By Theorem \ref{thm:copy}, $[T_0]_\mathcal{M} = [T]_\mathcal{M}$. We now build a sequence of trees $T_1,\dots,T_{h-1}$, where $h$ is the height of $S$, where each tree $T_i$ contains $T$ as a subtree rooted at $\alpha(T_i)$, and satisfies the following properties:\begin{enumerate}[start=0,label={(\bfseries P\arabic*)}]
        \item $[T_i]_\mathcal{M} = [T]_\mathcal{M}$.
        \item For any edge $e$ of $S$ with the height of $\mathrm{Cone}_S(\omega(e))$ at most $i$, $T_i$ contains a copy of the $\mathrm{Cone}_S(\omega(e))$ rooted at $\omega(e')$, with the endpoint in place if $\omega(S) \in \mathrm{Cone}_S(\omega(e))$.
        \item If $\omega(S) = \alpha(S)$, then $\omega(T_i) = \alpha(T_i)$. If instead $\omega(S) \neq \alpha(S)$, then for the edge $f$ of $S$ with $\omega(f) = \omega(S)$, if the height of $\mathrm{Cone}_S(\omega(f))$ is greater than $i$, then $\omega(T_i) = \omega(f')$.
        \end{enumerate}We construct our trees $T_i$ by induction. Our base case is the tree $T_0$ constructed above.

        By construction, $T_0$ immediately satisfies \textbf{P0}. For \textbf{P1}, note that any edge $e$ of $S$ with $\mathrm{Cone}_S(\omega(e))$ having height at most $0$, must be an edge which terminates at leaf. Hence $\mathrm{Cone}_S(\omega(e))$ is a single vertex, either with or without the endpoint of $S$. If it doesn't contain the endpoint, then certainly the cone of $\omega(e')$ in $T_0$ contains a copy of the trivial tree. If it does contain the endpoint, then our manipulation above ensured that $\omega(T_0) = \omega(e')$ as required: hence \textbf{P1} is satisfied. Moreover, our construction of moving the endpoint ensured that \textbf{P2} is satisfied by $T_0$ as the $\omega(T_0)$ is either $\alpha(T_0)$ (if $\omega(S) = \alpha(S)$), or at the end of the prime of the correct edge. Hence our base case is satisfied.

        Now suppose that we have constructed a tree $T_i$ satisfying our three required properties. We construct the tree $T_{i+1}$ as follows. Let $e$ be any edge of $S$ where $\mathrm{Cone}_S(\omega(e))$ has height $i+1$, and consider the edges $f_1,\dots,f_k$ of $S$ in this cone which are incident with $\omega(e)$. The height of each $\mathrm{Cone}_S(\omega(f_j))$ is at most $i$, and so (by inductive assumption) there is an isomorphic copy of $\mathrm{Cone}_S(\omega(f_j))$ located in $T_i$ at $\omega({f_j}')$, containing $\omega(T_i)$ if the respective cone in $S$ contains $\omega(S)$.

        Now for each $f_j$, note that \[\widetilde{\alpha({f_j}')} \ = \ \alpha\left(\widetilde{{f_j}'}\right) \ = \ \alpha(\phi(f_j)) \ = \ \phi(\alpha(f_j)) \ = \ \phi(\omega(e)) \ = \  \widetilde{\omega(e')}.\]Hence there is a constructable semiwalk on $T$ connecting $\alpha({f_j}')$ and $\omega(e')$. This semiwalk still exists in the embedded copy of $T$ in $T_i$, and so we appeal to Theorem \ref{thm:copy} to copy the $T_i$-cone of $\omega({f_j}')$ to $\omega(e')$, forming $T_i|^{\omega({f_j}')\to\omega(e')}$.

        Perform this operation for all of $f_1,\dots,f_k$. The new cone of $\omega(e')$ in the resulting tree certainly contains a copy of $\mathrm{Cone}_S(\omega(e))$, with endpoint in place if $\omega(S)$ is in $\mathrm{Cone}_S(\omega(f_j))$ for some $j$. If $\omega(S) = \omega(e)$, then since $T_i$ satisfied \textbf{P2} for the edge $e$ (since its $S$-cone has height $i+1 > i$), $\omega(T_i) = \omega(e')$, and we haven't moved it in constructing our new tree. Thus it is still located at $\omega(e')$. This shows that the new cone of $\omega(e')$ contains a copy of the $\mathrm{Cone}_S(\omega(e))$, including endpoint if $\omega(S) \in \mathrm{Cone}_S(\omega(e))$.

        Apply these operations for all such edges $e$ and call the resulting tree $T_{i+1}$. By the argument above, $T_{i+1}$ satisfies \textbf{P1}, and since all operations performed maintain the $\mathcal{M}$-value by Theorem \ref{thm:copy}, we have that $[T_{i+1}]_\mathcal{M} = [T_{i}]_\mathcal{M} = [T]_\mathcal{M}$, and so \textbf{P0} is satisfied. Moreover, $T_{i+1}$ satisfies \textbf{P2}, since if $\omega(S) = \alpha(S)$, then we have $\omega(T_i) = \alpha(T_i)$, and we haven't moved the endpoint in our construction. Thus $\omega(T_{i+1}) = \alpha(T_{i+1})$. If instead $\omega(S) \neq \alpha(S)$, then the only case in which we had moved the endpoint is if $\omega(S) \in \mathrm{Cone}_S(\omega(f_j))$ for some $f_j$, which has height at most $i$. Thus $T_{i+1}$ trivially satisfies \textbf{P2} if we moved the endpoint (as the cone will have height at most $i$), or if we didn't move the endpoint then $T_{i+1}$ trivially satisfies \textbf{P2} because $T_i$ does.

        Hence we may inductively construct the tree $T_{h-1}$ satisfying our three properties. We may now construct our tree $T_S$. Let $e_1,\dots,e_l$ be the initial edges of $S$. The height of the $\mathrm{Cone}_S(\omega(e_i))$ is at most $h-1$ for each $i$, and thus respective copies of these cones are located at $\omega({e_i}')$ in $T_{h-1}$ by induction (including endpoint if applicable). Then note that\[\widetilde{\alpha({e_i}')} = \phi(\alpha(e_i)) = \phi(\alpha(S)) = \widetilde{\alpha(T)}.\]Thus there are $n$-constructable semiwalks on $T$ connecting $\alpha(T)$ to each $\alpha({e_i}')$. These semiwalks still exist in $T_{h-1}$. As before, we use Theorem \ref{thm:copy} to copy the cones of each $\alpha({e_i}')$ to $\alpha(T_{h-1})$, moving the endpoint if the $S$-cone of $\omega(e_i)$ contains the endpoint of $S$. This constructs a copy of $S$ rooted at the start vertex of our new tree, with the endpoint in place if the endpoint of $S$ is not the start vertex of $S$. If the start vertex of $S$ is the endpoint, then by inductive assumption, $T_{h-1}$ had endpoint at $\alpha(T_{h-1})$, and the construction above did not move this endpoint. Thus our endpoint of our new tree is also at the start vertex; the `correct' place in $S$. This is our required tree $T_S$, which satisfies \textbf{P0} by construction; that is $[T_S]_\mathcal{M} = [T]_\mathcal{M}$.\end{proof}

        \begin{cor}\label{cor:conj2}
            Let $T$ be an $X$-tree. Fix a $(\mathcal{U},v) \in \textbf{U}(\overline{\widetilde{T}})$. Then there exists a tree $T_{(\mathcal{U},v)}$, with $[T_{(\mathcal{U},v)}]_\mathcal{M} = [T]_\mathcal{M}$, which contains an isomorphic copy of $(\mathcal{U},v)$ rooted at $\alpha(T)$.
        \end{cor}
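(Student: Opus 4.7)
The plan is to apply Lemma \ref{lem:conj2} directly, taking $S := (\mathcal{U},v)$. What this requires is a birooted $X$-graph morphism $\phi : (\mathcal{U},v) \to \widetilde{T}$; once such a $\phi$ is in hand, Lemma \ref{lem:conj2} produces exactly the tree $T_{(\mathcal{U},v)}$ sought, with $[T_{(\mathcal{U},v)}]_\mathcal{M} = [T]_\mathcal{M}$ and containing $(\mathcal{U},v)$ as a birooted subtree rooted at $\alpha(T_{(\mathcal{U},v)})$.

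To construct $\phi$, I would first exhibit the natural morphism $\psi : (\mathcal{U},v) \to \overline{\widetilde{T}}$ that is implicit in the definition of $\mathbf{U}(\overline{\widetilde{T}})$. Because $\mathcal{U}$ is assembled from the almost simple paths of $\overline{\widetilde{T}}$ starting at the root and then determinised by the $\mathcal{E}$-labelling, each vertex and edge of $(\mathcal{U},v)$ corresponds to a unique vertex or edge of $\overline{\widetilde{T}}$, and the $X$-label of each edge of $(\mathcal{U},v)$ is inherited from (and therefore agrees with) the $X$-label of its corresponding edge in $\overline{\widetilde{T}}$. This correspondence respects incidence, so $\psi$ is a morphism of $X$-graphs. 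By construction $\psi$ sends the root of $(\mathcal{U},v)$ to $\alpha(\overline{\widetilde{T}})$, and the endpoint $v$ was chosen precisely so that it corresponds to $\omega(\overline{\widetilde{T}})$; hence $\psi$ is a birooted morphism.

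Now $\overline{\widetilde{T}}$ is a birooted subgraph of $\widetilde{T}$ (being a retract of it), so composing $\psi$ with the inclusion $\overline{\widetilde{T}} \hookrightarrow \widetilde{T}$ yields the required birooted $X$-graph morphism $\phi : (\mathcal{U},v) \to \widetilde{T}$. Applying Lemma \ref{lem:conj2} to $S := (\mathcal{U},v)$ with this $\phi$ then produces the desired tree $T_{(\mathcal{U},v)}$. I do not expect any real obstacle here: the substantive work was absorbed into Lemma \ref{lem:conj2}, and this corollary is essentially a packaging of that lemma applied to the canonical morphism $(\mathcal{U},v) \to \widetilde{T}$ provided by the construction of $\mathbf{U}(\overline{\widetilde{T}})$.
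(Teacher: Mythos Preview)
Your proposal is correct and follows essentially the same approach as the paper: build the natural $\mathcal{E}$-label morphism $(\mathcal{U},v)\to\overline{\widetilde{T}}$, compose with an embedding of $\overline{\widetilde{T}}$ into $\widetilde{T}$ (as a retract, hence a birooted subgraph), and apply Lemma~\ref{lem:conj2} with $S=(\mathcal{U},v)$. The paper phrases the embedding step as ``$\widetilde{T}$ contains (possibly many) images of $\overline{\widetilde{T}}$; fix one such subgraph,'' which is exactly your inclusion of the retract.
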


        \begin{proof}
            The graph $\widetilde{T}$ contains (possibly many) images of the graph $\overline{\widetilde{T}}$; fix one such subgraph. The morphism $(\mathcal{U},v) \rightarrow \overline{\widetilde{T}}$ which sends an edge to its $\mathcal{E}$-label naturally admits a morphism $(\mathcal{U},v) \rightarrow \widetilde{T}$ via our chosen embedded subgraph. This morphism is also an $X$-graph morphism since it is a morphism of graphs labelled by $\mathcal{E}$. Hence a tree $T_{(\mathcal{U},v)}$ exists by Lemma \ref{lem:conj2}.
        \end{proof}

        \begin{thm}\label{thm:equalinM}
            Let $C$ be an $X$-generated right cancellative monoid and define $\mathcal{M}$ as above. Suppose $S$ and $T$ are trees such that $\overline{\widetilde{S}} = \overline{\widetilde{T}}$. Then $[S]_\mathcal{M} = [T]_\mathcal{M}$. 
        \end{thm}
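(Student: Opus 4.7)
The plan is to compare $S$ and $T$ through a common enlarged tree constructed using Lemma~\ref{lem:conj2} and Corollary~\ref{cor:conj2}, exploiting the shared pretzel $\Gamma := \overline{\widetilde{S}} = \overline{\widetilde{T}}$. First, by Proposition~\ref{prop:twosided}, we may assume $C$ is two-sided cancellative. Next, by Corollary~\ref{cor:idemtoleaves}, we may replace $S$ and $T$ by $\mathcal{M}$-equivalent trees so that all idempaths in both trees go to leaves. Then $\mathbf{U}(\Gamma)$ is defined and common to both, and by Lemma~\ref{lem:conj1} there exist $\mathcal{U}_S, \mathcal{U}_T \in \mathbf{U}(\Gamma)$ together with birooted $X$-graph morphisms $S \to \mathcal{U}_S$ and $T \to \mathcal{U}_T$.

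I would then apply Corollary~\ref{cor:conj2} to $T$ with the choice $(\mathcal{U}_S, v_S)$, producing a tree $T^\star$ with $[T^\star]_\mathcal{M} = [T]_\mathcal{M}$ containing $\mathcal{U}_S$ as a birooted subgraph rooted at $\alpha(T^\star)$. Composing the morphism $S \to \mathcal{U}_S$ with the inclusion $\mathcal{U}_S \hookrightarrow T^\star$ and the natural map $T^\star \to \widetilde{T^\star}$ yields a birooted morphism $S \to \widetilde{T^\star}$. Applying Lemma~\ref{lem:conj2} to this morphism produces a tree $R$ containing $S$ as a birooted subtree and satisfying $[R]_\mathcal{M} = [T^\star]_\mathcal{M} = [T]_\mathcal{M}$; it therefore suffices to prove $[R]_\mathcal{M} = [S]_\mathcal{M}$.

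The extra structure of $R$ beyond the embedded $S$ consists entirely of cones added by $|^{u\to v}$ operations, in each of which the pair $u,v$ was identified in the relevant $\sim$-descendant. Since Theorem~\ref{thm:copy} is symmetric -- the $\mathcal{M}$-value is preserved by both adding and removing such a copy -- I would argue by induction on the sequence of operations used in assembling $R$ that each extra cone can be pruned without changing the $\mathcal{M}$-value, eventually reducing $R$ to $S$. The main obstacle will be verifying, at each step of this pruning, that the cone being removed is indeed attached at a pair of $\sim$-identified vertices, so that Theorem~\ref{thm:copy} applies in reverse; this will require careful bookkeeping of how the two successive applications of Lemma~\ref{lem:conj2} interact, and how the intermediate tree $T^\star$ acts as a scaffold whose structure can be matched against both the $T$-side and the $S$-side of the argument.
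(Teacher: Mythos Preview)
Your final pruning step is a genuine gap. The symmetry of Theorem~\ref{thm:copy} lets you undo the operations of Lemma~\ref{lem:conj2}, but that would take $R$ back to $T^\star$, not to $S$. To prune $R$ toward $S$ you would need, at each stage, that the cone being removed is a copy $|^{u\to v}$ with $u,v$ identified in the $\sim$-image of the \emph{smaller} tree obtained after removal; nothing in your construction guarantees this, since the cones were attached using semiwalks living in $T^\star$ (and its enlargements), not in $S$. The ``careful bookkeeping'' you allude to is not a detail but the whole difficulty, and it is not clear how to carry it out.

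The paper avoids this entirely by comparing $S$ and $T$ not to each other but to a common third object $(\mathcal{U},v)\in\mathbf{U}(\Gamma)$. Starting from $S$, one uses Corollary~\ref{cor:conj2} to build $S_{(\mathcal{U},v)}$ containing $(\mathcal{U},v)$ as a birooted subtree, then pushes idempaths to leaves (Corollary~\ref{cor:idemtoleaves}) to obtain $S'$, which still contains $(\mathcal{U},v)$. The crucial step you are missing is that $S'$ now also admits a morphism \emph{onto} its embedded copy of $(\mathcal{U},v)$ (Lemma~\ref{lem:conj1} and Corollary~\ref{cor:conj1}), so Lemma~\ref{lem:retractfrommorph} gives $\overline{S'}\cong\overline{(\mathcal{U},v)}$ already in $\mathrm{FLAd}(X)$, hence $[S]_\mathcal{M}=[(\mathcal{U},v)]_\mathcal{M}$. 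The same argument applied to $T$ finishes the proof. The point is that a retraction onto the canonical tree $(\mathcal{U},v)$ replaces your uncontrolled pruning by a single application of Lemma~\ref{lem:retractfrommorph}.
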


        \begin{proof}
            Let $\textbf{U} = \textbf{U}(\overline{\widetilde{S}}) = \textbf{U}(\overline{\widetilde{T}})$ and fix a $(\mathcal{U},v) \in \textbf{U}$. By Corollary \ref{cor:conj2}, there exists a tree $S_{(\mathcal{U},v)}$ containing an embedded copy of $(\mathcal{U},v)$ at the root, such that $[S_{(\mathcal{U},v)}]_\mathcal{M} = [S]_\mathcal{M}$. By Corollary \ref{cor:idemtoleaves}, there further exists a tree $S_{(\mathcal{U},v)}'$, constructed from $S_{(\mathcal{U},v)}$ with $[S_{(\mathcal{U},v)}']_\mathcal{M} = [S_{(\mathcal{U},v)}]_\mathcal{M}$, in which idempaths go to leaves. By examining the proof of Corollary \ref{cor:idemtoleaves}, we note that the resulting tree $S_{(\mathcal{U},v)}'$ still contains a copy of $(\mathcal{U},v)$ at the root, as $(\mathcal{U},v)$ has the property that idempaths go to leaves. Moreover, since $[S_{(\mathcal{U},v)}']_\mathcal{M} = [S]_\mathcal{M}$, Theorem \ref{thm:quotienteasy} implies that $\overline{\widetilde{{S_{(\mathcal{U},v)}'}}} = \overline{\widetilde{S}}$.
            
            By Lemma \ref{lem:conj1} and Corollary \ref{cor:conj1}, $S_{(\mathcal{U},v)}'$ admits a birooted $X$-graph morphism $\phi$ to $(\mathcal{U},v)$. By Lemma \ref{lem:retractfrommorph}, it follows that the trees $S_{(\mathcal{U},v)}'$ and $(\mathcal{U},v)$ are equal in the free adequate monoid, and so $[S]_\mathcal{M} = [S_{(\mathcal{U},v)}']_\mathcal{M} = [(\mathcal{U},v)]_\mathcal{M}$.

            Dually, we have that $[T]_\mathcal{M} = [(\mathcal{U},v)]_\mathcal{M}$. Hence we have \[[S]_\mathcal{M} = [(\mathcal{U},v)]_\mathcal{M} = [T]_\mathcal{M}\]and the result is proved.
        \end{proof}

        \begin{cor}\label{cor:PTequalsM}
            Let $C$ be an $X$-generated right cancellative monoid and define $\mathcal{M}$ as above. Then $\textrm{PT}(C;X) \cong \mathcal{M}$.
        \end{cor}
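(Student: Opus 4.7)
The plan is to combine the two ``halves'' of the argument already assembled in this section. Theorem~\ref{thm:quotienteasy} produces a surjective $(2,1,0)$-morphism $\pi: \mathcal{M} \twoheadrightarrow \mathrm{PT}(C;X)$, and Theorem~\ref{thm:equalinM} is precisely the statement that the kernel congruence of this surjection is trivial. So the corollary should follow formally, and the work has all been done. I would just need to package this carefully.

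In more detail, I would first observe that both $\mathcal{M}$ and $\mathrm{PT}(C;X)$ are $X$-generated left adequate monoids, and hence both are $(2,1,0)$-quotients of the free left adequate monoid $\mathrm{FLAd}(X)$. Let $\pi_\mathcal{M}: \mathrm{FLAd}(X) \twoheadrightarrow \mathcal{M}$ denote the canonical quotient map sending each $X$-tree $T$ to $[T]_\mathcal{M}$, and let $\Phi: \mathrm{FLAd}(X) \twoheadrightarrow \mathrm{PT}(C;X)$ be the surjective $(2,1,0)$-morphism from Theorem~\ref{thm:PhifromUAtoPA}, which sends $T$ to $\overline{\widetilde{T}}$. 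Theorem~\ref{thm:quotienteasy} shows that $\Phi$ factors through $\pi_\mathcal{M}$, yielding a surjective $(2,1,0)$-morphism $\pi: \mathcal{M} \twoheadrightarrow \mathrm{PT}(C;X)$ with $\pi \circ \pi_\mathcal{M} = \Phi$.

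To conclude that $\pi$ is an isomorphism, it remains to verify injectivity. Suppose $a, b \in \mathcal{M}$ satisfy $\pi(a) = \pi(b)$. Since $\pi_\mathcal{M}$ is surjective, there exist $X$-trees $S, T \in \mathrm{FLAd}(X)$ with $\pi_\mathcal{M}(S) = a$ and $\pi_\mathcal{M}(T) = b$, that is, $[S]_\mathcal{M} = a$ and $[T]_\mathcal{M} = b$. Then
\[\overline{\widetilde{S}} = \Phi(S) = \pi(\pi_\mathcal{M}(S)) = \pi(a) = \pi(b) = \pi(\pi_\mathcal{M}(T)) = \Phi(T) = \overline{\widetilde{T}}.\]
Applying Theorem~\ref{thm:equalinM} gives $a = [S]_\mathcal{M} = [T]_\mathcal{M} = b$, so $\pi$ is injective and hence a $(2,1,0)$-isomorphism.

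There is no real obstacle here: all the hard work was done in Theorem~\ref{thm:equalinM} (which itself rested on the tree-of-almost-simple-paths construction, Lemma~\ref{lem:conj1}, Lemma~\ref{lem:conj2}, and the copying lemma Theorem~\ref{thm:copy}) and in Theorem~\ref{thm:quotienteasy}. The only mild care needed is to note that $\pi$ is a well-defined $(2,1,0)$-morphism, which follows from the universal property used in Theorem~\ref{thm:quotienteasy}.
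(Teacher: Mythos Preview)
Your proposal is correct and takes essentially the same approach as the paper: both combine Theorem~\ref{thm:quotienteasy} with Theorem~\ref{thm:equalinM} to conclude the isomorphism. The paper's proof is a single sentence observing that these two theorems give well-defined $X$-induced morphisms in both directions, while you spell out the factorisation and injectivity explicitly, but the content is the same.
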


        \begin{proof}
            Theorem \ref{thm:quotienteasy} and Theorem \ref{thm:equalinM} give that the maps induced by the generating sets $X$ in both directions are morphisms.
        \end{proof}

We close by discussing the nature of pretzel monoids as expansions of right cancellative monoids, and in particular the extent to which they are
natural analogues of Margolis-Meakin expansions \cite{mm:inv} in inverse semigroup theory.

First, it straightforward to check that the construction of a pretzel monoid from a right cancellative monoid is an example of an \textit{expansion} in the sense of Birget and Rhodes \cite{birget:expansion}, moreover it is the free idempotent-pure expansion in the sense of \cite{lawson:expansions}.

The presentation which we show in Corollary \ref{cor:PTequalsM} defines a pretzel monoid in terms of a right cancellative monoid $C$ is clearly the left adequate analogue of a well-known inverse monoid presentation for Margolis-Meakin expansions \cite[Corollary 2.9]{mm:inv}. {The respective geometric representation also bear a certain resemblance: elements of the Margolis-Meakin expansion of an $X$-generated group $G$ are birooted subgraphs of the Cayley graph of $G$ with respect to $X$; elements of the pretzel monoid of an $X$-generated right cancellative
monoid $C$ are not quite subgraphs of the Cayley graph of $C$ (this would require them to be \textit{co-deterministic}, and any definition attempting to enforce this
property is likely to result in a left ample monoid) but Proposition~\ref{prop:cayleygraphchunks} shows how they are closely related to such subgraphs.}

A Margolis-Meakin expansion of a group $G$ has maximal group image $G$. In contrast, the maximal right cancellative image of the Pretzel monoid
$\textrm{PT}(C;X)$ will clearly be the special right cancellative monoid $C'$ defined from $C$ as at the beginning of Section~\ref{sec:pretzel}, and so will
be $C$ only in the case that $C$ is itself special right cancellative.

Taking $C$ to be free, we recover $\mathrm{FLAd}(X)$, in a similar regard to how one recovers free inverse monoids from taking free groups in Margolis-Meakin expansions.

{Margolis-Meakin expansions are \textit{$E$-unitary} inverse monoids; the property of being $E$-unitary has several equivalent characterisations for inverse semigroups,
which turn out to be non-equivalent in the left adequate setting \cite{branco:lad, branco:ehresmannadequacy, gomes:lad}. One characterisation is that an inverse monoid
is $E$-unitary if the natural morphism to its maximal group image is \textit{idempotent-pure}, that is, if the elements which map to $1$ in the maximal group image are all idempotent. It is immediate from our presentation of pretzel monoids that $\textrm{PT}(C;X)$ has the corresponding properties with respect both to the right cancellative monoid $C$ used to construct it and the special right cancellative monoid $C'$ (which is its maximal right cancellative image). Moreover, it can easily be shown from the presentation that $\textrm{PT}(C;X)$ in the category of $X$-generated left adequate monoids with maximal right cancellative image $C'$, with morphisms the idempotent-pure $(2,1,0)$-morphisms.}

{It remains open if there is an alternative  natural (perhaps geometric) way to define a left adequate expansion of a right cancellative monoid $C$ which shares other properties of the Margolis-Meakin expansion, in particular such that the right cancellative image is always $C$.}


\bibliographystyle{plain}

\end{document}